\newcommand{\HS}{\mathbb{R}^3_{\scalebox{1.0}[1.0]{\,-}}}
\newcommand{\CHS}{\overline{\mathbb{R}^3_{\scalebox{0.5}[1.0]{\( - \)}}}}
\newcommand{\Bm}{B_{\scalebox{0.5}[1.0]{\( - \)}}}
\newcommand{\R}{\mathbb{R}}
\newcommand{\C}{\mathbb{C}}
\newcommand{\nuoli}{\rightarrow}
\newcommand{\Lc}{L_{comp}}
\newcommand{\Hc}{H_{comp}}
\newcommand{\Wc}{W_{comp}}
\newcommand{\Ll}{L_{loc}}
\newcommand{\Hl}{H_{loc}}
\newcommand{\Curl}{\nabla \times}
\newcommand{\ov}[1]{\overline{#1}}
\newcommand{\ksi}{\xi}
\newcommand{\dotc}{\cdot}
\renewcommand\Re{\operatorname{Re}}
\renewcommand\Im{\operatorname{Im}}
\newcommand\supp{\operatorname{supp}} 
\newcommand\spec{\operatorname{Spec}} 
\newcommand\Dom{\operatorname{Dom}} 
\newcommand\p{\operatorname{\partial}} 
\newcommand{\Ld}{L_{A,q}}
\newtheorem{thm}{Theorem}[section]
\newtheorem{cor}[thm]{Corollary}
\newtheorem{lem}[thm]{Lemma}
\newtheorem{prop}[thm]{Proposition}
\newtheorem{defn}[thm]{Definition}
\newtheorem{rem}[thm]{Remark}
\theoremstyle{definition}
\numberwithin{equation}{section}
\begin{document}
\nocite{*}
\title{An Inverse Boundary Value Problem for the  Magnetic Schr\"odinger 
Operator on a Half Space}
\author{Valter Pohjola}
\maketitle
\vfill
\begin{flushright}
\noindent
University of Helsinki \\
Department of Mathematics and Statistics \\
Licentiate Thesis \\
Advisors: Katya Krupchyk, Lassi Päivärinta
\end{flushright}
\thispagestyle{empty}

\newpage
\textbf{Abstract:} This licentiate thesis is concerned with an inverse boundary value
problem for the magnetic Schr\"odinger equation in a half space, for potentials
$A\in \Wc^{1,\infty}(\CHS,\R^3)$ and $q \in \Lc^{\infty}(\CHS,\C)$. 
We prove that $q$ and the curl of $A$ are uniquely determined by the knowledge
of the Dirichlet-to-Neumann map on parts of the boundary of the half space.
The existence and uniqueness of the corresponding direct problem are also
considered.
\thispagestyle{empty}

\newpage
\tableofcontents
\thispagestyle{empty}

\newpage
\section{Introduction}

The main purpose of this thesis is to investigate an inverse problem for the 
magnetic Schr\"odinger operator in the half space geometry. 
The  magnetic Schrödinger operator $\Ld$ is defined by
\begin{eqnarray}
\Ld := \sum_{j=1}^3 (-i\partial_j + A_j)^2 + q(x). \label{eq:Ldexp}
\end{eqnarray}
We fix the half space by considering $\HS:=\{ x \in \R^3 \;|\; x_3 < 0 \}$. 
We shall moreover assume that 
\begin{align}
A \in \Wc^{1,\infty}(\CHS,\R^3),   \text{ and }
q \in \Lc^{\infty}(\CHS,\C),   \quad \Im q\le 0.
\end{align}
Here 
\[
\Wc^{1,\infty}(\CHS,\R^3):=\{A|_{\CHS}  \; \big|\, A\in W^{1,\infty}(\R^3, \R^3), \supp(A)\subset 
\R^3 \text{ compact}\}
\]
and similarly, we define
\[
\Lc^{\infty}(\CHS,\C):=\{q\in L^{\infty}(\CHS,\C)\;\big|\,\supp(q)\subset 
\CHS \text{ compact}\}.
\]

The direct problem, from which the inverse problem stems, 
is the Dirichlet problem
\begin{equation}
 \left.\begin{aligned}
        (\Ld - k^2) u & = 0 \quad \text{ in $\HS$,}\\
        u |_{\partial \HS} &= f,
       \end{aligned}
 \right.
 \label{eq:BVP}
\end{equation}
where $k>0$ is fixed and  $f \in \Hc^{3/2}(\p\HS)$. 
Furthermore,  we will also require that the solution $u$ should satisfy a 
boundary
condition at infinity, which will be the
\textit{Sommerfeld radiation condition}
\begin{equation}
\lim_{|x| \nuoli \infty} |x| \bigg( \frac{\partial u(x)}{ \partial |x|}
-iku(x)\bigg) = 0.
\label{eq:SRC} 
\end{equation}
Solutions satisfying this condition are called \textit{outgoing}
or \textit{radiating} solutions. We will also occasionally use the term
\textit{incoming} solution. This refers to a solution of \eqref{eq:BVP} that 
satisfies 
\eqref{eq:SRC}, when the factor $-ik$ is replaced by $ik$.

The following result, which will be established in the first part of this work, 
gives the solvability of the direct problem \eqref{eq:BVP}, 
\eqref{eq:SRC}. 

\begin{thm}
\label{thm_1_direct}
Let $A\in \Wc^{1,\infty}(\CHS,\R^3)$ and $q \in \Lc^{\infty}(\CHS,\C)$ be such 
that  $\Im q\le 0$. Then for any $f \in \Hc^{3/2}(\p\HS)$, the Dirichlet 
problem \eqref{eq:BVP}, 
\eqref{eq:SRC} has a unique solution 
$u\in \Hl^2(\CHS)$. 
\end{thm}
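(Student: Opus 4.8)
The plan is to reduce the inhomogeneous Dirichlet problem \eqref{eq:BVP}, \eqref{eq:SRC} to a Lippmann--Schwinger-type integral equation, to apply the Fredholm alternative to it, and finally to settle the resulting injectivity question by an energy argument that uses the hypothesis $\Im q\le 0$. First I would reduce to homogeneous boundary data: since the trace map $\Hc^2(\CHS)\to\Hc^{3/2}(\p\HS)$ is surjective with a bounded right inverse, pick a compactly supported extension $\tilde f\in\Hc^2(\CHS)$ of $f$ and look for $u$ in the form $u=\tilde f+v$. Because $A\in\Wc^{1,\infty}$ and $q\in\Lc^\infty$, the right-hand side $F:=-(\Ld-k^2)\tilde f$ lies in $L_{comp}^2(\CHS)$, and $v$ must then solve $(\Ld-k^2)v=F$ in $\HS$ with $v|_{\p\HS}=0$ and the radiation condition \eqref{eq:SRC}, the compactly supported perturbation $\tilde f$ leaving \eqref{eq:SRC} unaffected.

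Next I would introduce the outgoing resolvent of $-\Delta-k^2$ on $\HS$ with the homogeneous Dirichlet condition, constructed by the method of images: with $\Phi_k(x)=e^{ik|x|}/(4\pi|x|)$ the outgoing fundamental solution on $\R^3$ and $y^{*}=(y_1,y_2,-y_3)$, put $G(x,y)=\Phi_k(x-y)-\Phi_k(x-y^{*})$ and $R_0F(x)=\int_{\HS}G(x,y)F(y)\,dy$. One has $G(x,y)=0$ for $x_3=0$, so $R_0\colon L_{comp}^2(\CHS)\to\Hl^2(\CHS)$ is bounded, $R_0F$ vanishes on $\p\HS$, and $R_0F$ satisfies \eqref{eq:SRC} since each $\Phi_k(\dotc-z)$ does, uniformly for $z$ in a compact set. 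Writing $\Ld-k^2=-\Delta-k^2+W$ with $W:=-2iA\dotc\nabla+(-i\,\nabla\dotc A+|A|^2+q)$ a first-order operator whose coefficients are supported in a fixed compact set, the problem for $v$ becomes equivalent to the operator equation $(I+R_0W)v=R_0F$.

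To apply the Fredholm alternative, fix $R>0$ with $\supp A\cup\supp q\subset B_R$, fix $R'>R$, and regard $I+R_0W$ as acting on the closed subspace $X:=\{w\in H^2(\HS\cap B_{R'}):w|_{\p\HS}=0\}$; since $Wv$ depends only on $v|_{B_R}$, a solution in $X$ determines a global $v=R_0(F-Wv)\in\Hl^2(\CHS)$ meeting all requirements. The operator $R_0W$ is compact on $X$: $W$ maps $H^1(\HS\cap B_{R'})$ boundedly into $L^2(\HS\cap B_R)$, $R_0$ maps the latter boundedly into $H^2(\HS\cap B_{R'})$, and the inclusion $H^2\hookrightarrow H^1$ on the bounded Lipschitz domain $\HS\cap B_{R'}$ is compact. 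Hence $I+R_0W$ is invertible as soon as it is injective, and injectivity is exactly the assertion that any $w\in\Hl^2(\CHS)$ solving $(\Ld-k^2)w=0$ in $\HS$ with $w|_{\p\HS}=0$ and \eqref{eq:SRC} must vanish, which also yields the uniqueness in \eqref{eq:BVP}. For this I would integrate $\big((\Ld-k^2)w\big)\bar w$ over $\HS\cap B_\rho$ and use the magnetic Green's identity: the boundary contribution on $\{x_3=0\}$ drops because $w|_{\p\HS}=0$, and $A\dotc\nu=0$ on $\{|x|=\rho\}$ once $\rho>R$, so taking imaginary parts gives $\Im\int_{\{|x|=\rho\}\cap\HS}\partial_r w\,\bar w\,dS=\int_{\HS\cap B_\rho}(\Im q)|w|^2\,dx\le 0$; combined with \eqref{eq:SRC} this forces $\int_{\{|x|=\rho\}\cap\HS}|w|^2\,dS\to 0$, and since $\Delta w+k^2w=0$ in $\HS\setminus\ov{B_R}$, Rellich's lemma gives $w\equiv0$ there. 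Weak unique continuation for the magnetic Schr\"odinger operator with Lipschitz $A$ and bounded $q$ then propagates this to $w\equiv0$ in $\HS$; alternatively one extends $w$ by odd reflection across $\{x_3=0\}$ and argues in the whole space.

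The main obstacle I anticipate is this last uniqueness step: pairing the energy identity with \eqref{eq:SRC} in the half-space geometry and, above all, invoking unique continuation at the merely Lipschitz regularity of $A$ assumed here. The mapping properties of $R_0$ and the verification of \eqref{eq:SRC} for $R_0F$ also require some care, but are otherwise routine.
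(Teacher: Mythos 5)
Your proposal is correct, but it takes a genuinely different route from the paper. The paper first builds the theory in all of $\R^3$: existence there comes from the Lax--Phillips method (glueing a near-field Dirichlet solution on a large ball, whose operator has discrete spectrum by a relative-compactness argument, to a far-field outgoing free solution, and running a Fredholm argument on the resulting cut-off equation), and uniqueness from the energy identity plus Rellich and unique continuation. The half-space problem is then reduced to $\R^3$ by extending $A_1,A_2,q$ evenly and $A_3,f$ oddly across $\{x_3=0\}$; this requires a preliminary gauge transformation, provided by Lemma \ref{BorelL}, so that $A_3|_{x_3=0}=0$ and the reflected $\tilde A$ stays in $W^{1,\infty}$. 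Your approach instead builds the Dirichlet outgoing resolvent $R_0$ for the half-space by the method of images, sets up the Lippmann--Schwinger equation $(I+R_0W)v=R_0F$ on a bounded half-ball, and applies the Fredholm alternative there. What this buys you is that you never reflect the potentials at all, so the gauge-normalization step and the ensuing check that the reflected solution is odd are bypassed entirely; you also avoid the somewhat delicate distributional computation of $\Delta\tilde u$ for an odd $H^2_{\textrm{loc}}$ reflection, since the only reflection you need is of the free Helmholtz solution in the exterior region where $A$ and $q$ vanish, which is the easy case. What the paper's route buys is that the $\R^3$ Lax--Phillips machinery is more structural and does not require an explicit Green function adapted to the boundary geometry, which makes it easier to transport to other geometries.

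Two small points worth being careful about if you write this out. First, the equivalence between the boundary value problem for $v$ and the operator equation $(I+R_0W)v=R_0F$ implicitly uses uniqueness for the \emph{free} radiating half-space Dirichlet problem $(-\Delta-k^2)u=g$, $u|_{\p\HS}=0$; this is elementary (odd-reflect $u$ and use whole-space Rellich), but it is a step that deserves to be spelled out. Second, Rellich's lemma as quoted in the paper (Proposition \ref{RL}) applies to a solution on $\R^3\setminus\ov B$, not on $\HS\setminus\ov B$; your parenthetical remark about odd reflection is in fact the necessary fix rather than an optional alternative, and it is harmless there because $A=q=0$ in that region. The uniqueness/injectivity argument itself (imaginary part of the magnetic Green identity over $\HS\cap B_\rho$, using $\Im q\le 0$ and the radiation condition to kill the far-field, then Rellich and unique continuation) is essentially identical to the paper's Theorem \ref{uniq1} transplanted to the half-space and is sound, since the unique continuation result Theorem \ref{UCP} holds for $W^{1,\infty}$ magnetic and $L^\infty$ electric potentials.
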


Here
\[
H^2_{loc}(\overline{\HS}):=\{u|_{\overline{\HS}} \; \big|\, u\in H^2_{loc}(\R^3)\}.
\]

Theorem \ref{thm_1_direct} permits us to define the so called \textit{Dirichlet 
to Neumann map} $\Lambda_{A,q}$,
(DN-map for short),
$\Lambda_{A,q}:\Hc^{3/2}(\p \HS) \nuoli H^{1/2}_{loc}(\p \HS)$ as
\begin{align*}
f\mapsto(\partial_n + i A\cdot n) u |_{\p \HS},
\end{align*}
where $u$ is the solution of the Dirichlet problem \eqref{eq:BVP}, 
\eqref{eq:SRC} and $f$ is the boundary condition. 
Here $n=(0,0,1)$ is the unit outer normal to the boundary $\p \HS$.

The inverse problem is then to determine if the DN-map uniquely determines the
potentials $A$ and $q$ in $\HS$. It turns out that the DN-map does not
in general uniquely determine $A$. This is due to the  gauge invariance of
the DN-map, which was first noticed by \cite{S1}.
It follows from the identities  
\begin{equation}
\label{eq_gauge}
e^{-i\psi}L_{A,q}e^{i\psi}=L_{ A+\nabla\psi,  q},\quad 
e^{-i\psi}\Lambda_{ A, q} e^{i\psi}=\Lambda_{A+\nabla \psi,q},
\end{equation}
that  $\Lambda_{A, q} =\Lambda_{A+\nabla \psi, q}$ when $\psi\in 
C^{1,1}(\CHS,\R)$ compactly supported is such that 
$\psi|_{\p\HS}=0$ (see Lemma \ref{gauge_inv}).  
This shows that $\Lambda_{A,q}$ cannot uniquely determine $A$.
The DN-map does however carry enough information to determine
$\nabla\times A$, which is the magnetic field in the context of electrodynamics. 

We shall use the notation  $A_j=(A_{j,1},A_{j,2},A_{j,3})$ for the component
functions, 
when considering a pair of magnetic potentials $A_j$, $j=1,2$. 
We now state the main result of this work, which generalizes the 
corresponding results of \cite{LCU1}, obtained in the case of the  
Schr\"odinger operator without a magnetic potential. 
\begin{thm}
\label{thm_2_inverse}
Let $A_j\in \Wc^{1,\infty}(\CHS,\R^3)$ and $q_j \in \Lc^{\infty}(\CHS,\C)$ be 
such that  $\Im q_j\le 0$, $j=1,2$. 
Denote by $B$ an open ball in $\R^3$, containing the supports of $A_j$, and $q_j$, $j=1,2$. 
Let $\Gamma_1,\Gamma_2\subset\p \R^3_-$ be open sets such that
\begin{align}
(\p \HS \setminus \ov{B}) \cap \Gamma_j \neq \emptyset, \quad j=1,2.
\label{eq_gamma}
\end{align}
Then if 
\begin{equation}
\label{eq_data_inv}
\Lambda_{A_1,q_1}(f)|_{\Gamma_1}=\Lambda_{A_2,q_2}(f)|_{\Gamma_1},
\end{equation}
for all $f\in \Hc^{3/2} (\p \HS)$, $\supp(f)\subset 
\Gamma_2$, then 
\[
\nabla\times A_1=\nabla \times A_2\quad\text{and}\quad q_1=q_2\quad 
\text{in}\quad \HS.
\]
\end{thm}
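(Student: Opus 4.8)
The strategy follows the Bukhgeim–Uhlmann / Sun approach, adapted to the half space via reflection. First I would reduce the half-space problem to a problem on a bounded domain in $\R^3$. Let $\Omega$ be a large ball (or a bounded smooth domain) containing $\ov B$ whose part of the boundary lying in $\{x_3=0\}$ contains the relevant pieces of $\Gamma_1,\Gamma_2$. The condition \eqref{eq_gamma} guarantees that there are points of $\p\HS$ outside $\ov B$ in each $\Gamma_j$, so near such points $A_j=0$ and $q_j=0$; this is what will let us later localize the construction of special solutions so that their traces are supported in $\Gamma_2$ and the Neumann data is read off only on $\Gamma_1$. The key preliminary step is an integral identity: using the equality of the DN-maps \eqref{eq_data_inv} on $\Gamma_1$ for data supported in $\Gamma_2$, together with the divergence/Green's formula for $\Ld-k^2$, one obtains
\[
\int_{\HS}\Big( (A_1-A_2)\cdot(u_1\ov{\nabla\ov u_2}-\ov u_2\nabla u_1)
+ (|A_1|^2-|A_2|^2 + q_1-q_2)\, u_1\ov{\ov u_2}\Big)\,dx = 0,
\]
valid for all solutions $u_1$ of $(\Ld[A_1,q_1]-k^2)u_1=0$ and $u_2$ of $(\Ld[A_2,\ov q_2]^*-k^2)\ov u_2=0$ (the formally adjoint/incoming problem) whose Dirichlet data are supported in $\Gamma_2$ and whose ``measured'' Neumann data agree on $\Gamma_1$. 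The main obstacle here is making this identity rigorous while controlling the boundary terms on the \emph{unbounded} part of $\p\HS$ and at infinity: one has to use the Sommerfeld radiation condition \eqref{eq:SRC}, the decay/uniqueness supplied by Theorem \ref{thm_1_direct}, and a Runge-type density argument to pass from solutions defined globally on $\HS$ to the complex-geometrical-optics solutions needed below — and, crucially, to show that solutions with data supported in $\Gamma_2$ suffice, by solving an exterior problem in $\HS\setminus\ov B$ and gluing (the piece of boundary outside $\ov B$ being ``free'').

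Second, I would insert complex geometrical optics (CGO) solutions. Choose $\zeta_1,\zeta_2\in\C^3$ with $\zeta_j\cdot\zeta_j=k^2$, $\zeta_1+\zeta_2=\xi+\text{(error)}$ for a prescribed real $\xi\perp$ some fixed unit vector, and $|\Im\zeta_j|\to\infty$. The magnetic CGO's have the form $u_j = e^{i\zeta_j\cdot x}\big(a_j(x) + r_j(x)\big)$, where the amplitude $a_j$ solves a $\ov\partial$-type transport equation $\zeta_j\cdot(\nabla a_j) + (\text{terms in }A_j)a_j=0$ — solved by a Cauchy-transform / Faddeev-type construction so that $a_j\to e^{i\Phi_j}$ with $\Phi_j$ a suitable line integral of (a tangential part of) $A_j$ — and the remainders satisfy $\|r_j\|_{L^2}=o(1)$, indeed $\|r_j\|\lesssim 1/|\Im\zeta_j|$, via the standard Carleman / Faddeev resolvent estimate $\| (\Delta + 2i\zeta_j\cdot\nabla)^{-1} \|_{L^2\to L^2}\lesssim 1/|\Im\zeta_j|$ together with the $W^{1,\infty}$ bound on $A_j$. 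Because $A_j,q_j$ are compactly supported inside $B$, these CGO solutions can be cut off and corrected (using the Runge approximation from Step 1) to have Dirichlet data supported in $\Gamma_2$, at the cost of errors that vanish in the limit — this gluing is the second place where \eqref{eq_gamma} is used.

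Third, substitute the CGO's into the integral identity and let $|\Im\zeta_j|\to\infty$. The remainder contributions vanish. The surviving leading term produces, after the standard manipulation (integration by parts moving the gradient off $a_j$, and using $\zeta_1+\zeta_2\approx\xi$), an expression of the form
\[
\int_{\R^3} e^{i\xi\cdot x}\, \xi\wedge\big(A_1-A_2\big)\cdot(\text{fixed vector})\; e^{i(\Phi_1+\Phi_2)}\,dx = 0 .
\]
Choosing the directions appropriately and letting the auxiliary line-integral phases collapse (the usual limiting argument: rotate the plane wave direction so $\Phi_1+\Phi_2\to 0$), one concludes $\widehat{(\nabla\times A_1 - \nabla\times A_2)}(\xi)=0$ for all $\xi$ in an open cone, hence for all $\xi$ by analyticity of the Fourier transform of a compactly supported distribution, giving $\nabla\times A_1=\nabla\times A_2$ in $\HS$. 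Then, by the gauge invariance \eqref{eq_gauge} and Lemma \ref{gauge_inv}, one replaces $A_2$ by a gauge-equivalent potential equal to $A_1$ (using that $A_1-A_2$ is curl-free and compactly supported with vanishing boundary trace on the relevant piece, so it is $\nabla\psi$ for an admissible $\psi$), which leaves the DN-data unchanged; a second pass through the integral identity — now with $A_1=A_2$ — kills the first-order term and yields $\int e^{i\xi\cdot x}(q_1-q_2)\,dx=0$ for $\xi$ in a cone, hence $q_1=q_2$ in $\HS$.

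I expect the main obstacle to be \emph{not} the CGO construction itself (that is by now standard for $W^{1,\infty}$ magnetic potentials) but the half-space bookkeeping: rigorously justifying the boundary integral identity in the presence of the radiation condition and the non-compact boundary, and carrying out the Runge-approximation/gluing that converts global radiating solutions on $\HS$ into solutions with Cauchy data supported in the prescribed open subsets $\Gamma_1,\Gamma_2$ of $\p\HS$ without destroying the CGO asymptotics. This is where Theorem \ref{thm_1_direct} (well-posedness of the direct problem) and the geometric hypothesis \eqref{eq_gamma} do the real work.
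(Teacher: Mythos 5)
Your overall plan matches the paper's architecture --- integral identity via Green's formula and unique continuation, Runge-type approximation, CGO solutions for $W^{1,\infty}$ magnetic potentials, Fourier analysis, then a gauge argument to recover $q$. But two substantive steps are missing or wrong, and they are exactly where the specific difficulties of this theorem live.

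First, the phase removal. You write that one lets ``the auxiliary line-integral phases collapse (the usual limiting argument: rotate the plane wave direction so $\Phi_1+\Phi_2\to 0$).'' This does not happen. The amplitude phases solve $\overline\partial$-transport equations in the plane spanned by $\gamma_1,\gamma_2$, and their combination satisfies
\[
(\gamma_1+i\gamma_2)\cdot\nabla\big(\Phi_1^0+\ov{\Phi_2^0}\big)=-i(\gamma_1+i\gamma_2)\cdot(\tilde A_1-\tilde A_2),
\]
so $e^{\Phi_1^0+\ov{\Phi_2^0}}$ is an $O(1)$ weight depending only on the (constrained) directions and on $A_1-A_2$; it does not tend to $1$ under any admissible rotation. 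Passing from
\[
(\gamma_1+i\gamma_2)\cdot\int_B(\tilde A_2-\tilde A_1)\,e^{ix\cdot\xi}\,e^{\Phi_1^0+\ov{\Phi_2^0}}\,dx=0
\]
to the same identity without the exponential factor is a genuine theorem, proved in Proposition \ref{prop_32} by one-variable complex analysis: after Stokes' theorem the identity becomes $\int_{\p T_{y_3}} g\,e^{\Psi}\,dz=0$ for all holomorphic $g$ on the $2$D slice $T_{y_3}$; using the Cauchy integral and the Plemelj--Sokhotski--Privalov jump formula one then builds a nowhere-vanishing holomorphic $F$ on $T_{y_3}$ with $F=e^\Psi$ on $\p T_{y_3}$, the argument principle shows $F$ has no zeros, and replacing $e^\Psi$ by $F$ lets one substitute $\log F$ for $\Psi$ and strip the weight. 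This is the central step in the magnetic case that goes beyond the scalar Calder\'on-type argument you sketched, and it cannot be replaced by a ``rotate until the phase vanishes'' heuristic.

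Second, the half-space boundary condition on the CGO solutions. The integral identity of Proposition \ref{identity} holds for $u_1\in W_1(\Bm)$ and $u_2\in W_2^*(\Bm)$, i.e.\ for solutions on $\Bm$ that \emph{vanish on the flat piece} $l=\p\HS\cap B$; the Runge lemma then relates $W_1(\Bm)$ to global radiating solutions with trace in $\tilde\Gamma_2$, but the CGOs you insert must themselves vanish on $l$, and ``cutting off and correcting'' destroys the solution property. The paper handles this by extending $A_j,q_j$ to the full ball $B$ via even/odd reflection across $\{x_3=0\}$, constructing CGOs $\tilde u_j$ on $B$ for the reflected operators, and taking $u_j(x)=\tilde u_j(x)-\tilde u_j(\tilde x)$, which vanishes on $l$ by construction. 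This produces cross terms with phases $e^{ix\cdot\xi_\pm}$; to control them one must additionally impose $\gamma_{1,3}=0$ (so those phases remain purely oscillatory) and $\gamma_{2,3}\ne 0$ (so $|\xi_\pm|\to\infty$ and Riemann--Lebesgue kills them). None of this appears in your outline, yet it is precisely the half-space-specific bookkeeping you correctly anticipated would be ``where the real work is.''
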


We would like to emphasize that in Theorem \ref{thm_2_inverse}, the set 
$\Gamma_1$, where measurements are performed, and the set $\Gamma_2$, where the 
data is supported,  can be taken arbitrarily small, provided that 
\eqref{eq_gamma} holds.  The result of Theorem \ref{thm_2_inverse}  pertains 
therefore to inverse problems with partial data. Such problems are important 
from the point of view of applications, since in practice, performing 
measurements on the entire boundary could be impossible, due to limitations in 
resources or obstructions from obstacles.  

The first uniqueness result, 
in the context of inverse boundary value problems for the magnetic 
Schr\"o\-dinger operator on a bounded domain, was 
obtained by Sun in \cite{S1}, under a smallness condition on $A$. 
Nakamura, Sun and Uhlmann proved the uniqueness without any smallness condition 
in \cite{NSU1}, assuming that $A \in C^2$. Tolmasky extended this result to
$C^1$ magnetic potentials, in \cite{Tol1} and Panchenko to some less regular but small
magnetic potentials in \cite{Pan1}.
Salo proved uniqueness for Dini continuous magnetic potentials in \cite{MS1}. 
The most recent result is given by Krupchyk and Uhlmann in \cite{Kru1}, where
uniqueness is proved for $L^\infty$ magnetic potentials.
In all of these works, the inverse boundary value 
problem with full data was considered. 

In \cite{ER1}, Eskin and Ralston obtained a uniqueness result for the closely 
related inverse scattering problem, assuming the exponential decay of the 
potentials.
The partial data problem in the magnetic case was considered by Dos Santos 
Ferreira, Kenig, Sjöstrand and Uhlmann in \cite{DosSantos1} and by Chung 
in \cite{Chung1}.

The inverse problem for the half space geometry, without a magnetic potential 
was examined by Cheney and Isaacson in \cite{CI1}. The uniqueness for this 
problem in the case of compactly supported electric potentials was proved
by Lassas, Cheney and Uhlmann in \cite{LCU1}, assuming that the supports do not 
come close to the boundary of the half space. The result of  Theorem 
\ref{thm_2_inverse} is therefore already a generalization of the work  
\cite{LCU1}, even in the absence of magnetic potentials. Li and Uhlmann  proved 
uniqueness for the closely related infinite slab geometry with $A=0$, in 
\cite{LU1}. Krupchyk, Lassas and Uhlmann did this for the magnetic case in
\cite{KLU1}. In both of these works, the reflection argument of Isakov 
\cite{I1} played an important role. The uniqueness problem for the magnetic potentials
in the slab and half space geometries 
has also been studied in a recent paper by Li \cite{Li1}. The half space results
in \cite{Li1} differ from the ones given in this work, by concerning the more general 
matrix valued Schrödinger equation and by assuming $C^6$ regularity on the  
magnetic potential.

The half space is perhaps the simplest example of an unbounded region with an 
unbounded boundary. It is of special interest in many applications, such as 
geophysics, ocean acoustics, and optical tomography, 
since it provides a simple model for semi infinite geometries. We would like to mention that 
the magnetic Schrödinger equation is closely related to the diffusion 
approximation of the photon transport equation, used in optical
tomography \cite{SA1}. The half space geometry is also of interest in optical 
tomography, since in practice, the source-detector pairs are often located on 
the same interface \cite{KL1}. 

The thesis  is divided into two main parts.  Section 2  gives a detailed 
account of the solvability of the direct problem and provides a proof of 
Theorem \ref{thm_1_direct}.  
Subsection \ref{sec:GF} develops some basic tools from
scattering theory. In Subsection \ref{sec:dirWS} we prove the existence and
uniqueness for the direct scattering problem in all of $\R^3$, using the 
Lax--Phillips method. In Subsection \ref{sec:dirHS} we extend this discussion to
the half space case, using a reflection argument.

Section 3 deals with the inverse problem and contains the proof of Theorem 
\ref{thm_2_inverse}.  In Subsection 3.1 a central integral identity is derived. 
 Subsection 3.2 contains the construction of complex geometric optics  
solutions   for magnetic Schr\"odinger operators with Lipschitz continuous 
potentials. The proof of Theorem \ref{thm_2_inverse} is concluded in 
Subsections 3.3 and 3.4.

\subsection{Acknowledgements}

I wish to thank my advisor Katya Krupchyk for suggesting the topic of this
thesis and for all the help while writing it. I also like to thank Mikko Salo
for providing me with his notes on Carleman estimates for the Magnetic Schr\"odinger
operator.

\newpage
\section{The direct problem}

The purpose of this section is to investigate the well-posedness of the 
boundary value problem \eqref{eq:BVP}, \eqref{eq:SRC} for
the magnetic Schr\"odinger operator in the half space, and to establish Theorem 
\ref{thm_1_direct}. 

 We prove existence 
and uniqueness results for this problem, but we will not concern ourselves with
showing that solutions depend continuously on boundary data. This is enough 
to guarantee that the
questions of the uniqueness of the inverse problem are sensible.

\subsection{The free space outgoing Green function}
\label{sec:GF}

The aim of this subsection is to introduce the outgoing Green function for the 
Helmholtz
equation and develop some basic notions used in scattering theory that we need
to attack the direct problem. For a more
in depth exposition,  see \cite{Colton1}. Having constructed the outgoing Green 
function, we use it to
investigate the corresponding resolvent operator and the asymptotics of
solutions.

A function $G$ is generally speaking a Green function for the Helmholtz
equation if it solves the problem 
\begin{eqnarray}
(-\Delta_x - k^2)G(x,y) = \delta(x-y), \quad y\in \Omega, 
\label{eq:GreensF}
\end{eqnarray}
in some region $\Omega$ and satisfies some specific boundary condition on
$\partial \Omega$. We are interested in the case $\Omega = \R^3$ with 
the boundary condition being the Sommerfeld radiation condition \eqref{eq:SRC}. 
 
In the next proposition we construct a specific Green function called the
\textit{outgoing free space Green function} denoted by $G_0$, which 
satisfies \eqref{eq:GreensF} and the Sommerfeld condition.

\begin{prop} \label{propGreen}
Let $k>0$. The function
\begin{equation}
G_0(x,y) = \frac{e^{i k |x-y|}}{4 \pi |x-y|}, \label{eq:FSGF}
\end{equation}
is a free space outgoing Green function.
\end{prop}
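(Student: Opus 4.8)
The plan is to verify directly that $G_0(x,y) = e^{ik|x-y|}/(4\pi|x-y|)$ satisfies the defining equation $(-\Delta_x - k^2)G_0(\cdot,y) = \delta(\cdot - y)$ in the sense of distributions, and then to check the Sommerfeld radiation condition \eqref{eq:SRC} by explicit differentiation. By translation invariance it suffices to take $y = 0$ and write $r = |x|$, so that $G_0(x,0) = e^{ikr}/(4\pi r)$ is radially symmetric away from the origin.

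First I would establish that $G_0$ is smooth on $\R^3 \setminus \{0\}$ and solves the homogeneous Helmholtz equation there. Using the formula for the Laplacian of a radial function, $\Delta v(r) = v''(r) + \tfrac{2}{r}v'(r)$, a short computation with $v(r) = e^{ikr}/(4\pi r)$ gives $\Delta v = -k^2 v$ for $r > 0$, so $(-\Delta - k^2)G_0 = 0$ pointwise away from the origin. It remains to identify the distributional contribution at the origin. For this I would take a test function $\varphi \in C_c^\infty(\R^3)$ and compute $\langle (-\Delta - k^2)G_0, \varphi\rangle = \int_{\R^3} G_0(x)(-\Delta - k^2)\varphi(x)\,dx$, splitting the integral into the region $|x| > \varepsilon$ and a small ball $|x| \le \varepsilon$. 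On $|x| > \varepsilon$ one integrates by parts twice (Green's second identity), using that $G_0$ solves the homogeneous equation there, which leaves only boundary terms on the sphere $|x| = \varepsilon$. Letting $\varepsilon \to 0$, the boundary terms involving $\partial_r G_0$ and $G_0 \partial_r \varphi$ are $O(\varepsilon)$ or $O(\varepsilon^2) \cdot \varepsilon^{-1}$ and vanish, while the term $\int_{|x|=\varepsilon} \varphi(x)\, \partial_r\!\left(\tfrac{1}{4\pi r}\right) dS$ — coming from the Newtonian-potential part of $G_0$ — converges to $\varphi(0)$, exactly as in the classical computation for $-\Delta (1/4\pi|x|) = \delta$. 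The contribution from the small ball $|x|\le\varepsilon$ vanishes since $G_0$ is locally integrable. This yields $(-\Delta - k^2)G_0(\cdot,y) = \delta(\cdot - y)$.

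Next I would check the Sommerfeld radiation condition \eqref{eq:SRC}. With $y$ fixed and $|x| \to \infty$ we have $|x - y| = |x| - \hat{x}\cdot y + O(|x|^{-1})$, and a direct differentiation of $G_0(x,y) = e^{ik|x-y|}/(4\pi|x-y|)$ in the radial variable $|x|$ gives
\begin{equation*}
\frac{\partial G_0}{\partial |x|} - ik G_0 = \left( ik - \frac{1}{|x-y|} \right)\frac{e^{ik|x-y|}}{4\pi|x-y|} \cdot \frac{\partial |x-y|}{\partial |x|} - ik G_0,
\end{equation*}
and since $\partial |x-y|/\partial |x| \to 1$ while the remaining terms are $O(|x|^{-2})$, multiplying by $|x|$ gives a quantity that tends to $0$. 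Hence $G_0$ is outgoing.

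The main obstacle is the careful bookkeeping of the boundary terms on the sphere $|x| = \varepsilon$ in the distributional computation: one must separate the singular part $1/(4\pi r)$ of $G_0$, whose normal derivative produces the delta, from the smooth correction $(e^{ikr}-1)/(4\pi r)$, whose boundary contributions vanish in the limit, and be precise about orientations of the normal. This is a standard but slightly delicate argument; everything else is routine calculus. I would organize the proof so that the cancellation of the non-singular boundary terms and the emergence of $\varphi(0)$ from the singular one are each stated as a short lemma-free estimate.
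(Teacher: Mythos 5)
Your proof is correct, but it follows a genuinely different route from the paper's. You verify the formula: you check that $G_0$ satisfies the homogeneous Helmholtz equation away from the origin using the radial Laplacian $\Delta v = v'' + \tfrac{2}{r}v'$, extract the $\delta$ at the origin by Green's second identity on $\{|x|>\varepsilon\}$ and a careful $\varepsilon\to 0$ limit of the boundary terms on the small sphere, and then check \eqref{eq:SRC} by direct differentiation in $|x|$. The paper instead \emph{derives} $G_0$: it passes to the regularized operators $-\Delta - k^2 - i\epsilon$, inverts the Fourier symbol $(\xi^2 - k^2 - i\epsilon)^{-1}$ by spherical coordinates and a residue computation to get $G_\epsilon(x) = e^{i\sqrt{k^2 + i\epsilon}|x|}/(4\pi|x|)$, defines the tempered distribution $(\xi^2 - k^2 - i0)^{-1}$ as the $\epsilon\to 0^+$ limit, identifies its inverse Fourier transform as $e^{ik|x|}/(4\pi|x|)$, and then verifies \eqref{eq:GreensF} by passing the limit under the distributional pairing. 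Your approach is shorter and entirely elementary (Green's identity plus calculus); it is the standard textbook verification. The paper's approach is constructive and, more to the point for this article, shows why the $+i\epsilon$ regularization singles out the \emph{outgoing} Green function from the incoming one, and introduces the notation $(-\Delta - k^2 - i0)^{-1}$ that is used in Remark \ref{remGF}, Lemma \ref{cont1}, and Corollary \ref{cont3}. As a proof of the proposition as stated, however, your argument is complete and sound; just make sure, when you say the remaining boundary terms are $O(\varepsilon)$, that you are explicit about the decomposition of $G_0$ into $1/(4\pi r)$ plus the Lipschitz remainder $(e^{ikr}-1)/(4\pi r)$, since only the first piece has a nonvanishing flux contribution as $\varepsilon\to 0$ (you flag this yourself, and it does work).
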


\begin{proof} We start by assuming $y=0$ in
(\ref{eq:GreensF}) and thinking of $G_0$ as a function of $x$ only.
Instead of considering the operator  $-\Delta -k^2$ directly, we first look 
for outgoing
Green's functions $G_{\epsilon}$ for the operators $-\Delta -k^2 -
i\epsilon$, where $\epsilon > 0$. By inserting $i \epsilon$ into 
\eqref{eq:GreensF} and taking the
Fourier transform we get that
\begin{align*}
\widehat{G}_{\epsilon} = \frac{1}{\xi^2 - k^2 - i\epsilon}.
\end{align*}
Since this is a locally integrable function, decaying at infinity,  we may take 
the inverse Fourier transform.
Let us therefore proceed by calculating the inverse Fourier transform,
\begin{align}
\mathcal{F}^{-1} \bigg( \frac{1}{ \xi^2 - k^2 - i \epsilon } \bigg)(x) &=
\frac{1}{(2 \pi)^3} \int_{\R^3} \frac{e^{i \xi \cdot x}} { \xi^2 -k^2 -
i \epsilon } d \xi \label{eq:Fep} \\
&= \frac{1}{(2 \pi)^3} \int_{0}^{2 \pi} \int_{0}^{\pi} \int_{0}^{\infty} 
\frac{e^{i
\xi(r,\theta,\phi) \cdot x}}
{r^2 -k^2 - i\epsilon } r^2 \sin \theta  d\phi d\theta dr. \nonumber
\end{align}
The Fourier transform of a spherically symmetric function is spherically
symmetric. We need therefore to calculate this integral only for e.g. 
$x=(0,0,R)$, $R=|x|$.
By doing this and abbreviating $\alpha = k^2 + i \epsilon $   we get 
\begin{eqnarray*}
\int_{0}^{\pi} \int_{0}^{\infty} \frac{e^{i rR \cos \theta}}
{ r^2 - \alpha} r^2 \sin \theta  d\theta dr &=& 
\frac{-1}{iR} \int_{0}^{\infty} \bigg[_{0}^{\pi} 
\frac{e^{i rR cos\theta}}{ r^2 - \alpha } r dr \\
&=& \frac{-1}{iR} \int_{0}^{\infty} 
\frac{e^{-i rR} - e^{i rR}}{ r^2 - \alpha} r dr \\
&=& \frac{1}{iR} \int_{-\infty}^{\infty} 
\frac{e^{i rR}}{ r^2 - \alpha} r dr. 
\end{eqnarray*}
We can evaluate this last integral with the method of residues, if we
temporarily replace
$r$ with a complex variable $z$. We choose as a
contour, an origin centered half circle $C_s$ that is in the upper half plane 
and
goes along the real axis.
The parameter $s$ stands for radius of the circle.  
By choosing the branch of the complex square root in the upper half plane  and 
a large enough $s$,
we achieve that  the pole $\sqrt{\alpha}$ will lie inside the contour. 
A residue integration gives us then
\begin{align*}
\int_{-\infty}^{\infty} \frac{e^{i rR}}{ r^2 - \alpha } r dr  
&= \lim_{s\nuoli \infty} \oint_{C_s} 
\frac{e^{i zR}}{ (z - \sqrt{\alpha})(z  + \sqrt{\alpha}) } z dz \\ 
&= 2\pi i \operatorname{Res} 
\bigg(\frac{e^{i zR}z}{ (z - \sqrt{\alpha})(z  +\sqrt{\alpha})}, \sqrt{\alpha} 
\bigg) \\ 
&= \pi i e^{iR\sqrt{\alpha}}.
\end{align*}
By going back to \eqref{eq:Fep} we see that
\begin{eqnarray*}
\mathcal{F}^{-1} \bigg( \frac{1}{ \xi^2 - k^2 - i \epsilon } \bigg)(x) =
\frac{e^{i\sqrt{k^2+i\epsilon}R}}{4 \pi R}, \quad R=|x|.
\end{eqnarray*}
In other words, $G_\epsilon(x) = e^{i\sqrt{k^2+i\epsilon}|x|}/(4 \pi |x|)$.

We will need to use the theory of distributions to extend the above argument to
the case $\epsilon = 0$.  We define the tempered
distribution $(\xi^2 -k^2 -i0)^{-1}$, by taking the limit
\begin{align}
\big \langle (\xi^2 -k^2 -i0)^{-1}, \varphi \big  \rangle  
= \lim_{\epsilon \to 0+} \big  \langle (\xi^2 - k^2 -i \epsilon)^{-1}
,\varphi \big \rangle 
\label{eq:regdef}
\end{align}
for $\varphi \in \mathcal{S}(\R^3)$, where the distribution 
pairing on the right is given by an integral.
One can check that this definition makes sense, using similar arguments as for 
the principal value distribution.
We define 
\begin{align*}
G_0 := \mathcal{F}^{-1} \big( (\xi^2 -k^2 -i0)^{-1} \big).
\end{align*}
The first part of the proof shows that 
\begin{align*}
\big \langle \mathcal{F}^{-1} \big((\xi^2 -k^2 -i0)^{-1} \big), \varphi \big  
\rangle  
&= \lim_{\epsilon \to 0+} \langle \mathcal{F}^{-1}\widehat{G}_{\epsilon}, 
\varphi \big
\rangle  \\
&= \lim_{\epsilon \to 0+} \int \frac{e^{i\sqrt{k^2+i\epsilon}|x|}}{4 \pi |x|}
\varphi(x) dx \\
&= \int \frac{e^{i k}|x|}{4 \pi |x|} \varphi(x) dx,
\end{align*}
so that $G_0(x) = e^{ik|x|}/(4\pi |x|)$.

To check condition \eqref{eq:GreensF}, we first note that $G_\epsilon$ is a 
Green
function, and hence 
\begin{align}
\lim_{\epsilon \to 0+}\big  \langle (-\Delta - k^2 -i \epsilon) G_\epsilon,
\varphi \big \rangle = \big \langle \delta, \varphi \big 
\rangle.\label{eq:eGFlim}
\end{align}
On the other hand we have
\begin{align*}
\lim_{\epsilon \to 0+}
\big \langle (-\Delta - k^2 -i \epsilon) G_\epsilon, \varphi \big \rangle 
& = \lim_{\epsilon \to 0+} \bigg( \big \langle (-\Delta - k^2 ) G_\epsilon, 
\varphi \big \rangle
+ \big \langle -i \epsilon G_\epsilon, \varphi \big \rangle \bigg) \\
& =  \big \langle G_0, (-\Delta - k^2 ) \varphi \big \rangle
+ \lim_{\epsilon \to 0+} \big \langle -i \epsilon G_\epsilon, \varphi \big 
\rangle.
\end{align*}
For the last  term we  have  $-i\epsilon \langle G_\epsilon , \varphi
\rangle \to  0$, as $\epsilon \to 0$. Combining the above with
\eqref{eq:eGFlim} gives
\begin{align*}
\big \langle (-\Delta - k^2 ) G_0, \varphi \big \rangle  = \big \langle
\delta, \varphi \big \rangle,
\end{align*}
so that \eqref{eq:GreensF} holds.

A direct calculation shows that $G_0$ satisfies the
Sommerfeld radiation condition \eqref{eq:SRC}.
Finally let us set $G_0(x,y) = G_0(x-y)$.
\end{proof}

\begin{rem} \label{remGF}
Notice that $G_0 \in \Ll^2(\R^3)$.  Since $G_0$ satisfies \eqref{eq:GreensF} in
the sense of distributions,  we have for $\varphi \in \Lc^2(\R^3)$
\begin{equation*}
(-\Delta -k^2)  (G_0 * \varphi) = \delta * \varphi = \varphi.
\end{equation*}
\end{rem}
Remark \ref{remGF} motivates the notation,
\begin{eqnarray*}
(-\Delta-k^2-i0)^{-1} \varphi(x) := G_0 * \varphi,
\end{eqnarray*}
where $-i0$ marks the fact that we took the positive sided limit in
\eqref{eq:regdef}. This is important since the positive limit guarantees that
$G_0$ and $G_0*\varphi$ are outgoing, i.e. satisfy the Sommerfeld condition. We 
would
have ended up with an \textit{incoming solution}, had we taken
the negative sided limit.

The above operator is in fact a limiting value of the  resolvent of the Laplace 
operator.
This operator is not continuous on $L^2(\R^3)$, when $\Im k =0$. 
It is however continuous between certain weighted spaces (see e.g.
\cite{Agmon1}).

The next lemma gives a more modest continuity result, which will be
used later.

\begin{lem} \label{cont1}
The operator 
\[
(-\Delta-k^2-i0)^{-1}:  \Lc^2(\R^3)\to \Ll^2(\R^3)
\]
is continuous. 
\end{lem}

\begin{proof}
Let us write  $T:=(-\Delta-k^2-i0)^{-1}$.  
Let  $B_r$ be a ball centered at the origin of  radius $r>0$ and $\chi_{B_r}$ 
stand
for the  characteristic function of the ball $B_r$. It suffices to show that 
the operator $\chi_{B_r}T\chi_{B_r}$ is continuous on $L^2(\R^3)$.  
We have, for $x\in B_r$, 
\begin{eqnarray*}
T\chi_{B_r} \varphi(x) = \int_{B_r} G_0(x-y) \varphi(y) dy, \quad \varphi\in L^2(\R^3).
\end{eqnarray*}
Since $G_0 \in L^2(B_r \times B_r)$ we get by applying the Cauchy--Schwarz
inequality, that
\begin{align*}
\int_{B_r}|T\chi_{B_r} \varphi(x)|^2 dx 
&\le  \int_{B_r} \bigg(\int_{B_r}G_0(x-y)\varphi(y)dy\bigg)^2dx \\
&\le  \int_{B_r} \int_{B_r} |G_0(x-y)|^2dy \;\|\varphi\|^2_2 dx \\
&\le  C \|\varphi\|^2_2.
\end{align*}
Hence $\| \chi_{B_r}T\chi_{B_r}\varphi\|_2 \leq C \| \varphi\|_2$.

\end{proof}

Next we use the closed graph theorem and elliptic regularity to extend the above
result to case where the range is the Sobolev space $H^2_{loc}(\R^3)$.

\begin{cor} \label{cont3}
The operator $(-\Delta-k^2-i0)^{-1} :L^2_{comp}(\R^3) \nuoli 
H^2_{loc}(\R^3)$ is continuous.
\end{cor}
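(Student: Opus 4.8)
The plan is to combine Lemma \ref{cont1} with elliptic regularity, using the closed graph theorem to package the continuity cleanly. Write $T := (-\Delta - k^2 - i0)^{-1}$. We already know from Lemma \ref{cont1} that $T : \Lc^2(\R^3) \to \Ll^2(\R^3)$ is continuous, and from Remark \ref{remGF} that for $\varphi \in \Lc^2(\R^3)$ one has $(-\Delta - k^2) T\varphi = \varphi$. Since $\varphi \in L^2_{loc}$ and $T\varphi \in L^2_{loc}$ solves this second-order elliptic equation, interior elliptic regularity gives $T\varphi \in H^2_{loc}(\R^3)$; so $T$ at least maps $L^2_{comp}(\R^3)$ into $H^2_{loc}(\R^3)$ as claimed. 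It remains to upgrade this to continuity with respect to the natural topologies.

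First I would fix the functional-analytic setting needed for the closed graph theorem. Fix two balls $B_r \subset B_R$ (say $R = 2r$) centered at the origin, and consider the restricted operator $\varphi \mapsto (T\varphi)|_{B_r}$ from $L^2(B_R)$ (functions supported in $B_R$, extended by zero) to $H^2(B_r)$; both of these are Banach spaces. Since $L^2_{comp}(\R^3)$ is an inductive limit of the spaces $L^2(B_R)$ and $H^2_{loc}(\R^3)$ is a projective limit of the spaces $H^2(B_r)$, it suffices to prove continuity of each such restricted map $L^2(B_R) \to H^2(B_r)$. To apply the closed graph theorem I would verify that the graph is closed: suppose $\varphi_n \to \varphi$ in $L^2(B_R)$ and $(T\varphi_n)|_{B_r} \to v$ in $H^2(B_r)$. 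By Lemma \ref{cont1}, $T\varphi_n \to T\varphi$ in $L^2(B_r)$, while $H^2(B_r)$ convergence implies $L^2(B_r)$ convergence, so $v = (T\varphi)|_{B_r}$ and the graph is closed. Hence the map is continuous.

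The one technical point deserving care — and the step I expect to be the main obstacle — is making the elliptic regularity estimate and the passage to the closed graph theorem fully rigorous on the overlapping-balls scale, in particular the interior regularity estimate $\|T\varphi\|_{H^2(B_r)} \le C\big(\|T\varphi\|_{L^2(B_R)} + \|\varphi\|_{L^2(B_R)}\big)$ with $B_r \Subset B_R$. This is the standard Caccioppoli-type interior $H^2$ estimate for solutions of $(-\Delta - k^2)u = \varphi$ (absorb the $k^2 u$ term into the right-hand side, since $u = T\varphi \in L^2(B_R)$), combined with Lemma \ref{cont1} controlling $\|T\varphi\|_{L^2(B_R)}$ by $\|\varphi\|_{L^2(B_R)}$. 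Once this estimate is in hand, continuity $L^2(B_R) \to H^2(B_r)$ follows directly, and the closed graph theorem argument above is in fact not even needed — but it is the cleanest way to state the conclusion if one wishes to avoid quoting a precise version of the interior estimate. I would present the proof via the closed graph theorem, relegating the elliptic regularity input to a citation, since that keeps the argument short and self-contained given what has already been established.
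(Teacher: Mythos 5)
Your proof is correct and follows essentially the same route as the paper: use elliptic regularity to show $T$ maps $L^2_{comp}$ into $H^2_{loc}$, then invoke the closed graph theorem, verifying closedness of the graph via Lemma \ref{cont1}. The only cosmetic difference is that you carefully work with nested balls $B_r \subset B_R$ where the paper uses a single ball $B$ for both source and target, and you additionally sketch a direct route via the interior $H^2$ estimate, which the paper omits.
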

\begin{proof} Let us write $T:=(-\Delta-k^2-i0)^{-1}$. By elliptic 
regularity, we have
\[
TL^2_{comp}(\R^3)\subset H^2_{loc}(\R^3). 
\]
The claim follows from the closed graph theorem, once we show that
the operator $T:L^2(B)\to H^2(B)$ is closed. Here $B$ is an open 
ball in $\R^3$. 
To that end, let $f_n\in L^2(B)$ be such that $f_n\to f$ in $L^2(B)$ and 
$Tf_n\to g$ in $H^2(B)$.  By Lemma \ref{cont1}, we know that $Tf_n\to Tf$ in 
$L^2(B)$. Hence, $g=Tf$. This completes the proof. 
\end{proof} 

We now begin investigating the asymptotics of radiating solutions to
the Helmholtz equation. First we look at the asymptotics of $G_0$.

\begin{lem} \label{asymFG}
The outgoing Green function has the following asymptotics,
\begin{equation}
G_0(x,y) = \frac{e^{ik(|x|-(x\cdot y)/|x|)}}{4 \pi |x|} + O\bigg( 
\frac{1}{|x|^2}\bigg),
     \label{eq:asymFG}
\end{equation}
as $|x| \nuoli \infty$, uniform for $y$ in a bounded set. It can be 
differentiated with respect to $y$. 
\end{lem}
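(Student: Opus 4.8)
The plan is to expand the distance function $|x-y|$ for large $|x|$ with $y$ fixed in a bounded set, and then substitute into the explicit formula \eqref{eq:FSGF}. Writing $|x-y| = |x|\sqrt{1 - 2(x\cdot y)/|x|^2 + |y|^2/|x|^2}$, a Taylor expansion of the square root gives
\begin{align*}
|x-y| = |x| - \frac{x\cdot y}{|x|} + O\!\left(\frac{1}{|x|}\right),
\end{align*}
uniformly for $y$ in a bounded set. I would then treat separately the amplitude factor $1/(4\pi|x-y|)$ and the phase factor $e^{ik|x-y|}$. For the amplitude, $|x-y| = |x|(1 + O(1/|x|))$ gives $1/(4\pi|x-y|) = 1/(4\pi|x|) + O(1/|x|^2)$. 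For the phase, I would write $e^{ik|x-y|} = e^{ik(|x| - (x\cdot y)/|x|)}\,e^{ikR_x(y)}$ where $R_x(y) = O(1/|x|)$ is the remainder above, and use that $e^{ikR_x(y)} = 1 + O(1/|x|)$ since $R_x(y)\to 0$ uniformly. Multiplying the two expansions and collecting terms, the leading term is $e^{ik(|x|-(x\cdot y)/|x|)}/(4\pi|x|)$ and every cross term is $O(1/|x|^2)$, which yields \eqref{eq:asymFG}.

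The differentiability claim I would handle by noting that $\partial_{y_j} G_0(x,y)$ can be computed explicitly from \eqref{eq:FSGF}:
\begin{align*}
\partial_{y_j} G_0(x,y) = \left(ik - \frac{1}{|x-y|}\right)\frac{(x_j - y_j)}{|x-y|}\,\frac{e^{ik|x-y|}}{4\pi|x-y|},
\end{align*}
and then run the same asymptotic expansion on each factor: $(x_j-y_j)/|x-y| = x_j/|x| + O(1/|x|)$, the parenthesized factor is $ik + O(1/|x|)$, and the last factor expands as before. This shows $\partial_{y_j} G_0(x,y) = ik\,\frac{x_j}{|x|}\,\frac{e^{ik(|x|-(x\cdot y)/|x|)}}{4\pi|x|} + O(1/|x|^2)$, which is precisely the termwise $y$-derivative of the right-hand side of \eqref{eq:asymFG}, so the expansion may indeed be differentiated in $y$. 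Uniformity in $y$ on bounded sets is preserved throughout because all the error estimates depend on $y$ only through $|y|$, which is bounded.

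The only mildly delicate point is bookkeeping the uniformity of the $O(1/|x|^2)$ terms: one must check that the implied constants in the square-root Taylor remainder and in $e^{ikR_x(y)} - 1 = O(R_x(y))$ depend only on a bound for $|y|$ and on $k$, not on the direction of $x$. This is routine since for $|x|$ large compared to the bound on $|y|$, the quantity under the square root stays in a fixed compact subset of $(0,\infty)$ where the square root is smooth with bounded derivatives. I do not expect any real obstacle here; the lemma is essentially a direct computation from the closed form of $G_0$.
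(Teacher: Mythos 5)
Your proposal is correct and follows essentially the same route as the paper: Taylor-expand $|x-y|$, treat the amplitude $1/(4\pi|x-y|)$ and the phase $e^{ik|x-y|}$ separately, and multiply. You actually go a bit further than the paper on two points it leaves implicit — the explicit computation verifying that the expansion may be differentiated in $y$, and the remark on why the error constants depend only on a bound for $|y|$ — both of which are welcome additions.
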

\begin{proof} We will derive the asymptotic expression estimating the
nominator and the reciprocal of the denominator of $G_0$ separately.

We start by writing
\begin{eqnarray*}
|x-y| = |x|\sqrt{1 -\frac{2x \cdot y}{|x|^2} + \frac{|y|^2}{|x|^2}}. 
\end{eqnarray*}
By looking at the Taylor series we see that $\sqrt{1+x} = 1 + x/2 + O(x^2)$, as
$x \to 0$. Applying this to the above gives
\[
|x-y| = |x| -\frac{x \cdot y}{|x|} + O\bigg(  \frac{1}{|x|} \bigg), 
\]
for $|x| \nuoli \infty$.  It follows that 
\begin{equation}
\frac{1}{|x-y|} = \frac{1}{|x|} + O\bigg(  \frac{1}{|x|^2} \bigg),
\label{eq:as1}
\end{equation}
as $|x| \nuoli \infty$.

For the nominator of $G_0$, let $f(x)=O(1/|x|)$ be the function, for which 
$ik|x-y| = ik|x| -ik{x \cdot y}/|x| + f(x)$. Then
\begin{align*}
\exp(ik|x-y|) 
& = 
\exp \bigg( ik|x| -ik\frac{x \cdot y}{|x|} \bigg) \exp ( f(x)) \\
& = 
\exp \bigg( ik|x| -ik\frac{x \cdot y}{|x|} \bigg) \bigg( 1 + f(x) +
\frac{f(x)^2}{2!} + \dots \bigg),
\end{align*}
which gives the expression
\begin{align*}
\exp(ik|x-y|) & = 
\exp \bigg( ik|x| -ik\frac{x \cdot y}{|x|} \bigg) + O \bigg( \frac{1}{|x|} 
\bigg), 
\end{align*}
as $|x| \nuoli \infty$.  Multiplying this with the asymptotic expression 
\eqref{eq:as1} gives the asymptotic expression for $G_0$.
\end{proof}

The next small lemma shows that the $L^2$ norm over a sphere of an outgoing 
solution to the 
Helmholtz equation stays bounded as the radius of the sphere grows. Note that 
this applies also to
incoming solutions. This will be used later.
The lemma is mainly needed, since we use a weak form of the Sommerfeld
radiation condition \eqref{eq:SRC}. 

\begin{lem} \label{SRChlp}
Let $u \in \Hl^2(\R^3)$ be an outgoing or incoming solution to the Helmholtz 
equation $(-\Delta-k^2)u=0$ in $\R^3\setminus\ov{B_r}$, where  
$B_r=\{x\,\big|\,|x|<r\}$. Then
\begin{align}
\|u\|_{L^2(\partial B_s)}=O(1), \quad \|\partial_n u\|_{L^2(\partial B_s)} = 
O(1), \quad{s \nuoli \infty}. \label{eq:ab1}
\end{align}
\end{lem}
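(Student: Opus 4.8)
The plan is to derive \eqref{eq:ab1} from a conservation-of-flux identity for the Helmholtz equation together with the radiation condition, avoiding the Green function $G_0$ and any uniqueness statement. Fix $r''>r$. Since $(-\Delta-k^2)u=0$ on $\R^3\setminus\overline{B_r}$, elliptic regularity makes $u$ smooth there; on each sphere $\partial B_s$ with $s>r''$ the outer unit normal is the radial direction $n=x/|x|$, so $\partial_n u$ is the radial derivative and $u,\partial_n u\in L^2(\partial B_s)$. First I would apply Green's first identity to $\overline{u}$ and $u$ on the annulus $\Omega_s:=B_s\setminus\overline{B_{r''}}$; using $-\Delta u=k^2u$ there, this gives
\[
\int_{\partial B_s}\overline{u}\,\partial_n u\,dS-\int_{\partial B_{r''}}\overline{u}\,\partial_n u\,dS=\int_{\Omega_s}\bigl(|\nabla u|^2-k^2|u|^2\bigr)\,dx\in\R .
\]
Taking imaginary parts (the right-hand side being real) shows that
\[
\Im\int_{\partial B_s}\overline{u}\,\partial_n u\,dS=\Im\int_{\partial B_{r''}}\overline{u}\,\partial_n u\,dS=:c_0
\]
is a finite constant, independent of $s>r''$.

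Next I would expand the radiation integrand pointwise on $\partial B_s$, namely $|\partial_n u-iku|^2=|\partial_n u|^2+k^2|u|^2-2k\,\Im\bigl(\overline{u}\,\partial_n u\bigr)$, and integrate; the flux identity then yields
\[
\|\partial_n u\|_{L^2(\partial B_s)}^2+k^2\|u\|_{L^2(\partial B_s)}^2=\int_{\partial B_s}|\partial_n u-iku|^2\,dS+2kc_0 .
\]
(For an incoming solution $-ik$ is replaced by $ik$, which changes only the sign of the $2kc_0$ term; the rest is unchanged.) The Sommerfeld condition \eqref{eq:SRC} — indeed its weak $L^2$ form, which it implies since the limit in \eqref{eq:SRC} is uniform in direction, whence $|\partial_n u-iku|=o(1/|x|)$ and $\int_{\partial B_s}|\partial_n u-iku|^2\,dS=o(1)$ — forces the right-hand side above to converge to the finite limit $2kc_0$ as $s\to\infty$. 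Hence the nonnegative quantity $\|\partial_n u\|_{L^2(\partial B_s)}^2+k^2\|u\|_{L^2(\partial B_s)}^2$ stays bounded for large $s$, and since $k>0$ each of its two summands is $O(1)$, which is \eqref{eq:ab1}.

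The computations are routine; the one step that needs a moment's thought — and, to my mind, the only real point of the proof — is the flux identity: it is the standard conservation of energy flux for $(-\Delta-k^2)u=0$, and it is exactly what converts the mere smallness of $\partial_n u-iku$ on large spheres into separate a priori bounds on $u$ and $\partial_n u$, with no decay or growth assumption on $u$ at infinity beyond $u\in H^2_{loc}(\R^3)$. I do not anticipate a genuine obstacle beyond checking that $u\in H^2_{loc}$ and the equation away from $\overline{B_r}$ make all the boundary integrals above meaningful (traces of the smooth function $u$ on spheres), which they do.
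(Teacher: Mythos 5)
Your argument is correct and is essentially the same as the paper's: both expand $|\partial_n u-iku|^2$ into $|\partial_n u|^2+k^2|u|^2-2k\Im(\overline{u}\,\partial_n u)$, use a Green's-identity flux argument on an annulus to show $\Im\int_{\partial B_s}\overline{u}\,\partial_n u\,dS$ is constant in $s$, and then let the radiation condition drive $\int_{\partial B_s}|\partial_n u-iku|^2\,dS$ to zero so the two non-negative summands stay bounded. The only cosmetic differences are that you invoke Green's first identity where the paper uses the second, and you handle the incoming case by tracking a sign where the paper passes to $\overline{u}$.
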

\begin{proof} 
Pick a ball $B_s \supset \ov{B_r}$. 
Assume that $u$ is outgoing, i.e.  it satisfies \eqref{eq:SRC}.
Multiplying this condition with its 
complex conjugate and integrating over $B_s$ gives
\begin{align}
&\int_{|x|=s}  \big(k^2|u |^2 + |\partial_n u |^2 + 2k \Im(u \partial_n \ov{u} 
) \big)dS=\int_{|x|=s} |\p_n u-iku|^2dS \nuoli 0,
\label{eq:lim1}
\end{align}
as $s \nuoli \infty$.  Here $n$ is the unit outer normal to $\p B_s$.  Using 
Green's formulas we have that
\begin{align*}
\int_{\partial B_s \cup \partial B_r} (u \partial_n \ov{u}  -
\ov{u}  \partial_n u )dS
= \int_{B_s \setminus B_r} (u \Delta \ov{u}  - \ov{u} 
\Delta u)dx  =0,
\end{align*}
and therefore, 
\begin{align*}
\int_{\partial B_r}  (u \partial_n \overline{u}  - \overline{u} \partial_n u )dS
&=
-\int_{\partial B_s} ( u \partial_n \overline{u}  - \overline{u}  \partial_n u 
)dS \\
&= -2i\int_{\partial B_s} \Im(u \partial_n \overline{u} )dS.
\end{align*}
It follows that the last expression does not depend on $s$. 
This shows that the two first terms in \eqref{eq:lim1}, which are both
positive,  are bounded,  and particularly we see that \eqref{eq:ab1} holds.

The incoming case can be reduced to the outgoing case as follows.
Firstly if 
$u$ is an incoming solution of the Helmholtz equation, then $\ov{u}$ is an outgoing 
solution. Moreover $\|u\|_{L^2}=\|\ov{u}\|_{L^2}$. 
The incoming case follows therefore from the outgoing case.
\end{proof} 

The following lemma gives a boundary integral 
representation for radiating solutions of the Helmholtz equation in 
exterior regions. 
\begin{lem} \label{repr1}
Let $u \in \Hl^2(\R^3)$ be an outgoing solution to the Helmholtz equation 
$(-\Delta-k^2)u=0$ in $\R^3\setminus\ov{B_r}$, where $B_r=\{x\in\R^3\,\big|\,|x|<r\}$. Then 
\begin{eqnarray*}
u(x) = \int_{\partial B_{r_1}} 
(\partial_{n_y} G_0(x,y) u(y) - G_0(x,y) 
\partial_{n_y} u(y) )dS(y),
\end{eqnarray*}
for $x \in \R^3 \setminus \overline{B}_{r_1}$ and $r_1>r$.
\end{lem}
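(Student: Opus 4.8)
The statement is the classical Green's representation formula for radiating solutions in an exterior domain, so the plan is to combine Green's second identity with the Sommerfeld radiation condition to kill the contribution of the sphere at infinity. First I would fix $x \in \R^3 \setminus \overline{B}_{r_1}$ and choose $R$ large enough that $\overline{B}_{r_1} \cup \{x\}$ is contained in the open ball $B_R$. Apply Green's second identity to $u$ and to $y \mapsto G_0(x,y)$ on the annular region $\{y : r_1 < |y| < R\} \setminus \{x\}$, excising a small ball $B_\varepsilon(x)$ around the singularity of $G_0$. Since both $u$ and $G_0(x,\cdot)$ solve the Helmholtz equation $(-\Delta - k^2)v = 0$ in this region (the former by hypothesis, the latter by Proposition \ref{propGreen} applied in the $y$-variable, using the symmetry $G_0(x,y) = G_0(x-y)$), the volume term vanishes and we are left with boundary integrals over $\partial B_{r_1}$, over $\partial B_R$, and over $\partial B_\varepsilon(x)$.

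The $\partial B_\varepsilon(x)$ term is handled exactly as in the standard proof of the representation formula: as $\varepsilon \to 0$, the integral $\int_{\partial B_\varepsilon(x)} (\partial_{n_y} G_0(x,y) u(y) - G_0(x,y)\partial_{n_y}u(y))\,dS(y)$ converges to $u(x)$, using the local behaviour $G_0(x,y) \sim \frac{1}{4\pi|x-y|}$ and the $H^2_{loc}$-regularity of $u$ (which by Sobolev embedding gives $u$ continuous near $x$). This is routine and I would state it with a brief justification rather than full detail. The orientation of normals has to be tracked carefully: on $\partial B_{r_1}$ the outward normal of the annulus points toward the origin, which accounts for the sign in the claimed formula.

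The main obstacle, and the only step requiring the auxiliary lemmas, is showing that the integral over $\partial B_R$ tends to $0$ as $R \to \infty$. Here I would use the Sommerfeld radiation condition on both $u$ and $G_0(x,\cdot)$: write
\[
\int_{\partial B_R}\!\big(\partial_{n_y}G_0(x,y)\,u(y) - G_0(x,y)\,\partial_{n_y}u(y)\big)\,dS(y)
= \int_{\partial B_R}\!\big[(\partial_{n_y}G_0 - ikG_0)u - G_0(\partial_{n_y}u - iku)\big]\,dS(y),
\]
since the two $ik$-terms cancel. For the first term, $\partial_{n_y}G_0(x,y) - ikG_0(x,y) = O(1/R^2)$ uniformly on $\partial B_R$ (this follows from the asymptotics in Lemma \ref{asymFG}, differentiated in $y$), while $\|u\|_{L^2(\partial B_R)} = O(1)$ by Lemma \ref{SRChlp}; since the surface area is $O(R^2)$, Cauchy--Schwarz gives a bound of order $O(R^2)\cdot O(1/R^2)\cdot O(1)^{1/2}\cdot R \to $ — more carefully, $\big|\int_{\partial B_R}(\partial_{n_y}G_0 - ikG_0)u\big| \le \|\partial_{n_y}G_0 - ikG_0\|_{L^2(\partial B_R)}\|u\|_{L^2(\partial B_R)} = O(R^{-1})\cdot O(1) \to 0$. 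For the second term, $\|G_0(x,\cdot)\|_{L^2(\partial B_R)} = O(R^{-1})$ and $\|\partial_{n_y}u - iku\|_{L^2(\partial B_R)} \to 0$ by the radiation condition \eqref{eq:SRC} for $u$ (in the weak $L^2$ form established in the proof of Lemma \ref{SRChlp}), so again Cauchy--Schwarz gives convergence to $0$. Collecting the three pieces yields $u(x) = \int_{\partial B_{r_1}}(\partial_{n_y}G_0(x,y)u(y) - G_0(x,y)\partial_{n_y}u(y))\,dS(y)$, as claimed.
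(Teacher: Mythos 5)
Your proof is correct and follows essentially the same approach as the paper: Green's second identity on the annulus with a small ball excised around $x$, the classical limit $\varepsilon \to 0$ recovering $u(x)$, and the boundary-at-infinity contribution killed by adding and subtracting $ikG_0 u$, then using the Sommerfeld radiation condition together with the $L^2(\partial B_R)$-boundedness from Lemma \ref{SRChlp} and Cauchy--Schwarz. The only cosmetic difference is that the paper fixes $r_2 > r_1$ and sends $r_2 \to \infty$ at the end rather than fixing $R$ large at the outset, but the estimates are identical.
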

\begin{proof} 
Let $r<r_1<r_2$ and $x_0\in \R^3$ be an arbitrary point in 
$\Omega := B_{r_2}\setminus\ov{B_{r_1}}$. 
Applying the Green formula to 
$\Omega_\epsilon:= B_{r_2}\setminus(\ov{B_{r_1}}\cup\ov{B_\epsilon(x_0)})$, $\epsilon>0$
sufficiently small, we get
\begin{equation}
\label{eq_green_rep_1}
\begin{aligned}
0=&\int_{\Omega_\epsilon} (-\Delta-k^2)G_0(x,x_0)u(x)
- G_0(x,x_0)(-\Delta-k^2)u(x)dx\\
=& \int_{\p \Omega_\epsilon} (-\p_nG_0(x,x_0)u(x)+G_0(x,x_0)\p_n u(x))dS(x). 
\end{aligned}
\end{equation}
We have 
\begin{align*}
\bigg| \int_{\p B_\epsilon(x_0)}G_0(x,x_0)\p_n u(x)dS(x) \bigg|\le \int_{\p B_\epsilon(x_0)}\frac{|\p_n u(x)|}{4\pi\epsilon}dS(x)\le O(\epsilon).
\end{align*}
Consider next
\begin{align*}
\int_{\p B_\epsilon(x_0)}\p_nG_0(x,x_0)u(x) dS(x)=&-\int_{\p B_\epsilon(x_0)} \frac{ike^{ik\epsilon}}{4\pi\epsilon} u(x)dS(x)\\
&+ \int_{\p B_\epsilon(x_0)} \frac{e^{ik\epsilon}}{4\pi\epsilon^2}u(x)dS(x). 
\end{align*}
It follows that 
\[
\bigg|\int_{\p B_\epsilon(x_0)} \frac{ike^{ik\epsilon}}{4\pi\epsilon} u(x)dS(x)\bigg|=O(\epsilon),
\]
as $\epsilon \to 0$.
Using the fact that $u(x)=u(x_0)+ O(\epsilon)$, we get
\[
\int_{\p B_\epsilon(x_0)} \frac{e^{ik\epsilon}}{4\pi\epsilon^2}u(x)dS(x)=e^{ik\epsilon}u(x_0)+O(\epsilon). 
\]
Letting $\epsilon\to 0$ in \eqref{eq_green_rep_1}, we obtain that 
\begin{align*}
u(x_0)=&\int_{\p \Omega} (-\p_nG_0(x,x_0)u(x)+G_0(x,x_0)\p_n u(x))dS(x).
\end{align*}
The next step is to show that 
\[
I:=\int_{\p B_{r_2}} (-\p_nG_0(x,x_0)u(x)+G_0(x,x_0)\p_n u(x))dS(x)\to 0,
\]
as $r_2\to\infty$. 
By adding and subtracting  $ikG_0u$, $I$ can be written as 
\begin{align*}
 I=&-\int_{\p B_{r_2}} (\p_nG_0(x,x_0)-ikG_0(x,x_0))u(x)dS(x)\\
 &+\int_{\p B_{r_2}}  G_0(x,x_0)(\p_n u(x) -iku(x)) dS(x).
  \end{align*}
We show that the first term goes to zero as $r_2 \nuoli \infty$. The second term
can be estimated in the same way. Because of \eqref{eq:ab1} we can use the
Cauchy-Schwarz inequality  and write
\begin{align*}
  \bigg|\int_{\partial B_{r_2}} u (\partial_n G_0 - ikG_0) dS\bigg|^2
  \leq \int_{\partial B_{r_2}} |u|^2 dS 
  \int_{\partial B_{r_2}} |\partial_n G_0 - ikG_0|^2 dS 
  \nuoli 0,
\end{align*}
as $r_2 \nuoli \infty$. Here we have used that $|\partial_n G_0 -
ikG_0|^2 = o(r_2^{-2})$, valid because of \eqref{eq:SRC}.

\end{proof}

The preceding lemmas allow us to prove the main result on the asymptotics of 
outgoing
solutions to the Helmholtz equation.

\begin{prop} \label{asymHE}
Let $u \in \Hl^2(\R^3)$ be an outgoing solution to the Helmholtz equation  
$(-\Delta-k^2)u=0$ in $\R^3\setminus\ov{B_r}$, where $B_r=\{x\in\R^3\,\big|\,|x|<r\}$.
Then there exists $a \in L^2(S^2)$ such that 
\begin{eqnarray}
u(x) = \frac{e^{ik|x|}}{|x|} a(\hat{x}) + O\bigg( \frac{1}{|x|^2}\bigg), \quad 
\hat{x} := x/|x|  \in S^2, \label{eq:asymHE}
\end{eqnarray}
uniformly in all directions as $x \nuoli \infty$.
\end{prop}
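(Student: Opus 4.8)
The plan is to combine the boundary integral representation of Lemma~\ref{repr1} with the pointwise asymptotics of the free Green function from Lemma~\ref{asymFG}, and to read off the leading coefficient $a$ directly from the resulting boundary integral. First I would fix some $r_1>r$ and apply Lemma~\ref{repr1}, which gives, for $x\in\R^3\setminus\ov{B_{r_1}}$,
\[
u(x) = \int_{\p B_{r_1}}\big( \p_{n_y} G_0(x,y)\,u(y) - G_0(x,y)\,\p_{n_y}u(y)\big)\,dS(y).
\]
Since $u\in\Hl^2(\R^3)$, the traces of $u$ and of $\p_{n_y}u$ on the compact surface $\p B_{r_1}$ lie in $L^2(\p B_{r_1})$ by the trace theorem, so the integral is well defined and, more importantly, integrated remainders can later be controlled by Cauchy--Schwarz.

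Next I would substitute the expansion of Lemma~\ref{asymFG},
\[
G_0(x,y) = \frac{e^{ik|x|}}{4\pi|x|}\,e^{-ik\hat x\cdot y} + O\!\Big(\frac{1}{|x|^2}\Big),
\]
together with its $y$-derivative, which that lemma permits us to take, namely
\[
\p_{n_y} G_0(x,y) = \frac{e^{ik|x|}}{4\pi|x|}\,(-ik)(\hat x\cdot n_y)\,e^{-ik\hat x\cdot y} + O\!\Big(\frac{1}{|x|^2}\Big),
\]
where the remainders are uniform for $y\in\p B_{r_1}$ and the prefactors $e^{-ik\hat x\cdot y}$, $\hat x\cdot n_y$ are bounded uniformly in $\hat x\in S^2$ since $|y|=r_1$. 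Collecting terms and factoring out $e^{ik|x|}/|x|$ then gives
\[
u(x) = \frac{e^{ik|x|}}{|x|}\,a(\hat x) + R(x),
\]
where
\[
a(\hat x) := \frac{1}{4\pi}\int_{\p B_{r_1}} e^{-ik\hat x\cdot y}\big( -ik(\hat x\cdot n_y)\,u(y) - \p_{n_y}u(y)\big)\,dS(y)
\]
and $R(x)$ is a finite sum of integrals over $\p B_{r_1}$ of the traces of $u$ and $\p_{n_y}u$ against functions that are $O(1/|x|^2)$ uniformly on $\p B_{r_1}$. By Cauchy--Schwarz and the $L^2(\p B_{r_1})$-bounds on those traces, $|R(x)|\le C/|x|^2$ with $C$ independent of $\hat x$, which is \eqref{eq:asymHE}; and $a$, being the integral over a fixed compact surface of an integrand depending smoothly (indeed real-analytically) on $\hat x$, is continuous on $S^2$, hence in particular $a\in L^2(S^2)$.

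The step I expect to be the main obstacle is making the uniformity claims precise: that the $O(1/|x|^2)$ remainder in Lemma~\ref{asymFG} is uniform over $y$ in the bounded set $\p B_{r_1}$, that the expansion may be differentiated once in $y$ without worsening the error order, and that after integration against the $L^2$ traces of $u$ and $\p_{n_y}u$ the remainder $R(x)$ still has order $1/|x|^2$ uniformly in the direction $\hat x$. The first two are exactly what Lemma~\ref{asymFG} asserts, and the third follows from Cauchy--Schwarz once one knows $u|_{\p B_{r_1}},\,\p_{n_y}u|_{\p B_{r_1}}\in L^2(\p B_{r_1})$, which holds because $u\in\Hl^2(\R^3)$. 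Everything else is bookkeeping.
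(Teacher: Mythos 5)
Your proposal is correct and follows essentially the same route as the paper's own proof: apply the boundary integral representation of Lemma~\ref{repr1}, substitute the asymptotic expansions of $G_0$ and $\p_{n_y}G_0$ from Lemma~\ref{asymFG}, and collect the leading term to define $a$ while bounding the remainder integral by $O(1/|x|^2)$. Your version is a little more explicit (you write out the far-field coefficient $a(\hat x)$ and invoke Cauchy--Schwarz with the $L^2$ traces), but the structure and the lemmas used are identical to the paper's.
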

\begin{proof}
An application of  Lemma \ref{asymFG} allows us to write 
\[
G_0(x,y) = \frac{e^{ik|x|}}{|x|} a_1(\hat{x},y) + f_1(x,y), 
\]
where $f_1= O(1/|x|^2)$ uniformly in all directions as $|x| \nuoli \infty$.
Next, a straightforward computation using Lemma \ref{asymFG} shows that 
\[
\partial_{n_y} G_0(x,y) = \frac{e^{ik|x|}}{|x|} a_2(\hat{x},y) + f_2(x,y),
\]
where $f_2 = O(1/|x|^2)$ uniformly in all directions as $|x| \nuoli \infty$.
By the representation of Lemma \ref{repr1} we have
\begin{align*}
u(x) = \int_{\partial B_{r_1}} 
& \bigg( \frac{e^{ik|x|}}{|x|} a_2(\hat{x},y) + f_2(x,y) \bigg)u(y)dS(y) \\
-
& \int_{\partial B_{r_1}} \bigg(
\frac{e^{ik|x|}}{|x|} a_1(\hat{x},y) +  f_1(x,y) \bigg) \partial_n u(y) dS(y),
\end{align*}
with $r_1>r$. 
Let us split this into four separate integrations  corresponding to the 
individual terms. The terms
involving $a_1$ and $a_2$ are clearly of the form of the first term on the 
right 
side of the equation \eqref{eq:asymHE}. The two remaining terms give the 
contribution,
\begin{align*}
\int_{\partial B_{r_1}}  (f_2(x,y)u(y) - f_1(x,y)\partial_n u(y)) dS(y).
\end{align*}
Since $f_1,f_2 = O(1/|x|^2)$ uniformly in all directions as $|x| \nuoli 
\infty$,  we conclude that  the integral above is  $O(1/|x|^2)$, which proves 
the claim.
\end{proof}

The function $a$ in \eqref{eq:asymHE} is called \textit{the far field
pattern} or \textit{scattering amplitude} and is of central interest in
scattering theory.

\subsection{The magnetic Schr\"odinger equation in $\R^3$}
\label{sec:dirWS}

In this subsection we begin proving existence and uniqueness for 
problem
\eqref{eq:BVP}, \eqref{eq:SRC} by first considering the partial differential 
equation
in the whole of $\R^3$. The main tool
will be the Lax-Phillips method from scattering theory, see \cite{LaxPhillips1} 
and
\cite{Isakov1}.
We will assume that the potentials and sources are compactly supported in 
$\R^3$. More
precisely,  we assume that 
\begin{align}
A\in\Wc^{1,\infty}(\R^3,\R^3),\quad 
q\in\Lc^{\infty}(\R^3,\C),\quad \Im q\le 0,
\text{ and }  f \in \Lc^2(\R^3). \label{eq:assumR1}
\end{align}
Our aim is to find an outgoing solution $u$ to 
\begin{eqnarray}
    (\Ld - k^2 ) u & = f, \quad \text{in $\R^3$} \label{eq:probR}
\end{eqnarray}
and to show that it is unique.

We begin by recalling what is meant by a weak solution.
We call $u \in H^1_{loc}(\R^3)$ a \textit{weak solution} to \eqref{eq:probR}
if for every $v \in C^\infty_0(\R^3)$
\begin{eqnarray}
\int_{\R^3} (\nabla u \cdot \nabla \overline{v}  - 2iA\cdot (\nabla u) 
\overline{v} 
+ P_{k,q,A} u \overline{v} )dx
= \int_{\R^3} f\overline{v}dx, \label{eq:ws}
\end{eqnarray}
where $P_{k,q,A}  := -i\nabla \cdot A + A^2 + q - k^2$.
We state explicitly the following regularity result for weak solutions.
\begin{lem} \label{reg1}
Let $u$ be a weak solution to \eqref{eq:probR}. Then $u \in H^2_{loc}(\R^3)$.  
\end{lem}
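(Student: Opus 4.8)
The plan is to prove the elliptic regularity statement by rewriting the magnetic Schr\"odinger equation as a perturbation of the Laplacian with right-hand side in $L^2_{loc}$, and then invoking interior elliptic regularity for $-\Delta$. First I would fix an arbitrary ball $B$ in $\R^3$ and a slightly larger ball $B'\Supset B$, and work locally. Expanding $\Ld$ in divergence form, a weak solution $u\in H^1_{loc}(\R^3)$ of \eqref{eq:probR} satisfies, in the distributional sense,
\[
-\Delta u = 2iA\cdot\nabla u - P_{k,q,A}\,u + f = 2iA\cdot\nabla u + i(\nabla\cdot A)u - A^2u - qu + k^2 u + f.
\]
Since $A\in W^{1,\infty}$ and $q\in L^\infty$ are compactly supported, and $u\in H^1(B')$, every term on the right-hand side lies in $L^2(B')$: indeed $A\cdot\nabla u\in L^2$ because $A\in L^\infty$ and $\nabla u\in L^2$; $(\nabla\cdot A)u$, $A^2u$, $qu$, $k^2u\in L^2$ because the coefficients are in $L^\infty$ and $u\in L^2$; and $f\in L^2_{comp}\subset L^2(B')$. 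Hence $\Delta u\in L^2(B')$ in the sense of distributions.

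Next I would apply interior elliptic regularity (or $H^2$ local regularity) for the Laplacian: if $u\in H^1(B')$ and $\Delta u\in L^2(B')$, then $u\in H^2(B)$, with a bound $\|u\|_{H^2(B)}\le C(\|\Delta u\|_{L^2(B')}+\|u\|_{H^1(B')})$. This is a standard consequence of difference-quotient estimates or of the known mapping properties of the Newtonian potential; it can also be deduced from Corollary \ref{cont3} applied to a suitable cutoff, but it is cleaner to cite it directly. Since $B$ was arbitrary, this gives $u\in H^2_{loc}(\R^3)$, which is the claim.

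The only point requiring a little care — and the one I would single out as the main (minor) obstacle — is the bootstrap structure: to conclude $u\in H^2$ one only needs $\Delta u\in L^2$ and $u\in H^1$, and all the problematic first-order term $2iA\cdot\nabla u$ already sits at the $L^2$ level because $A$ is bounded; there is no need to iterate or to gain extra derivatives on $A$. So a single application of the $H^1\Rightarrow H^2$ elliptic estimate suffices, and no genuine difficulty arises beyond verifying that each coefficient-times-$u$ (or coefficient-times-$\nabla u$) product is square-integrable on compact sets, which is immediate from the $L^\infty$ bounds and compact support of $A$ and $q$.
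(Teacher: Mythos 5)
Your proof is correct and follows essentially the same route as the paper: isolate $-\Delta u$, observe that the remaining terms lie in $L^2_{\textrm{loc}}$ because $A\in W^{1,\infty}$, $q\in L^\infty$ are compactly supported and $u\in H^1_{\textrm{loc}}$, and then invoke interior elliptic regularity for the Laplacian. The paper's version is just more compact, leaving the $L^2_{\textrm{loc}}$ verification and the citation to elliptic regularity implicit.
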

\begin{proof} Since $u \in \Hl^1(\R^3)$ and $f \in L^2(\R^3)$, we have 
\begin{eqnarray*}
 -\Delta u = 2i A \cdot \nabla u + P_{k,q,A} u + f \in L_{loc}^2(\R^3).
\end{eqnarray*}
By elliptic regularity, see \cite{Grubb1}, we conclude that $u \in 
H^2_{loc}(\R^3)$. 
\end{proof}

The asymptotics given by Proposition \ref{asymHE}, combined with
Rellich's lemma and the unique continuation principle, see the appendix,  give 
the uniqueness 
for outgoing solutions to \eqref{eq:probR}.  This is the content of the 
following
theorem.

\begin{thm} \label{uniq1}
Assume that $A$ and $q$ satisfy \eqref{eq:assumR1}.
If $u \in
\Hl^{1}(\R^3)$ satisfies the Sommerfeld condition \eqref{eq:SRC} and solves
\begin{equation*}
(\Ld - k^2) u = 0 \quad \text{in}\quad \R^3,
\end{equation*}
where $k > 0$, then $u \equiv 0$.
\end{thm}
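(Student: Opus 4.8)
The plan is to derive from the hypotheses that $u$ has vanishing far field pattern, invoke Rellich's lemma to conclude $u\equiv 0$ outside a large ball, and then use unique continuation to push this to all of $\R^3$. First I would observe that by Lemma \ref{reg1} (elliptic regularity applied to the equation $-\Delta u = 2iA\cdot\nabla u + P_{k,q,A}u$, which has $L^2_{loc}$ right-hand side since $f=0$ here), the solution $u$ actually lies in $H^2_{loc}(\R^3)$, so the results of Subsection \ref{sec:GF} apply. Pick a ball $B_r$ containing $\supp(A)\cup\supp(q)$; then $u$ solves the Helmholtz equation $(-\Delta-k^2)u=0$ in $\R^3\setminus\ov{B_r}$ and is outgoing, so Proposition \ref{asymHE} gives the asymptotic expansion $u(x) = e^{ik|x|}|x|^{-1}a(\hat x) + O(|x|^{-2})$ with $a\in L^2(S^2)$.

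The heart of the matter is to show $a\equiv 0$. The standard device is an energy identity: multiply $(\Ld-k^2)u=0$ by $\ov u$, integrate over $B_s$ for large $s$, and integrate by parts. Because $A$ is real-valued, the magnetic terms $\sum_j(-i\partial_j+A_j)^2 u$ produce, after integration by parts, a real quadratic form $\int_{B_s}\sum_j|(-i\partial_j+A_j)u|^2\,dx$ plus the boundary term $\int_{\partial B_s}\ov u\,\partial_n u\,dS$ (the normal component of $A$ contributes a real multiple of $|u|^2$ which is also real). Taking imaginary parts, the bulk terms from $-k^2|u|^2$ and $\sum_j|(-i\partial_j+A_j)u|^2$ drop out, leaving
\begin{equation*}
\Im\int_{\partial B_s}\ov u\,\partial_n u\,dS = -\int_{B_s}(\Im q)|u|^2\,dx \ge 0,
\end{equation*}
using $\Im q\le 0$. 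On the other hand, plugging the asymptotic expansion from Proposition \ref{asymHE} (differentiated, so that $\partial_n u = ik\,e^{ik|x|}|x|^{-1}a(\hat x)+O(|x|^{-2})$) into the left side gives $\Im\int_{\partial B_s}\ov u\,\partial_n u\,dS \to k\int_{S^2}|a(\hat x)|^2\,dS$ as $s\to\infty$. Combining, $k\int_{S^2}|a|^2\,dS \le 0$, forcing $a\equiv 0$.

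With $a\equiv 0$, the expansion says $u(x)=O(|x|^{-2})$. Now Rellich's lemma (cited in the appendix) applies to the Helmholtz solution $u$ on $\R^3\setminus\ov{B_r}$: a solution of $(-\Delta-k^2)u=0$ in the exterior of a ball whose $L^2$-mean over spheres decays faster than $|x|^{-1}$ must vanish identically outside that ball. Hence $u\equiv 0$ on $\R^3\setminus\ov{B_r}$. Finally, $u$ solves the second-order elliptic equation $L_{A,q}u - k^2 u = 0$ with $W^{1,\infty}$ magnetic and $L^\infty$ electric coefficients; since $u$ vanishes on the open set $\R^3\setminus\ov{B_r}$, the unique continuation principle (again, the appendix) forces $u\equiv 0$ on all of $\R^3$.

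I expect the main obstacle to be the energy-identity step: one must be careful that the boundary integration by parts is justified at the $H^2_{loc}$ level (so that $\partial_n u|_{\partial B_s}$ makes sense in $L^2$, as guaranteed by trace theorems and Lemma \ref{SRChlp}), that all the bulk magnetic terms combine into a manifestly real quantity so they genuinely cancel upon taking imaginary parts, and that the limit $s\to\infty$ of the boundary term is computed rigorously from the asymptotics — here one uses that the cross terms between the leading $e^{ik|x|}|x|^{-1}a$ part and the $O(|x|^{-2})$ remainder, integrated over $\partial B_s$ (area $\sim s^2$), give $O(s^{-1})\to 0$, while the leading-order self term contributes exactly $k\|a\|_{L^2(S^2)}^2$. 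The invocations of Rellich's lemma and unique continuation are then black-box citations to the appendix.
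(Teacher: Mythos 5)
Your overall strategy is exactly the paper's: an energy identity on a large ball $B_s$ to control the sign of $\Im\int_{\partial B_s}\ov u\,\partial_n u$, comparison with the far-field asymptotics from Proposition \ref{asymHE} to kill the scattering amplitude $a$, then Rellich's lemma followed by unique continuation. The only structural difference — that you carry out the integration by parts directly rather than citing the magnetic Green's formula of Lemma \ref{MagGFI} — is cosmetic.

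However, there is a genuine sign error in the energy identity that makes your written argument self-contradictory. Integrating $\sum_j(-i\partial_j+A_j)^2 u\cdot\ov u$ by parts produces the boundary term with a \emph{minus} sign: using $D_j := -i\partial_j + A_j$ and the fact that $A$ vanishes on $\partial B_s$, one has
\begin{equation*}
\int_{B_s}\sum_j D_j^2 u\cdot\ov u\,dx = \int_{B_s}\sum_j|D_j u|^2\,dx \;-\; \int_{\partial B_s}(\partial_n u)\,\ov u\,dS,
\end{equation*}
so the identity obtained from $(\Ld-k^2)u=0$ is
\begin{equation*}
\int_{\partial B_s}(\partial_n u)\,\ov u\,dS = \int_{B_s}\sum_j|D_j u|^2\,dx + \int_{B_s}q|u|^2\,dx - k^2\int_{B_s}|u|^2\,dx,
\end{equation*}
and taking imaginary parts yields $\Im\int_{\partial B_s}\ov u\,\partial_n u\,dS = \int_{B_s}(\Im q)|u|^2\,dx \le 0$, not $\ge 0$ as you wrote. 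With your sign, combining with the asymptotic computation $\Im\int_{\partial B_s}\ov u\,\partial_n u\,dS \to k\int_{S^2}|a|^2\,dS$ would only give $k\int_{S^2}|a|^2\ge 0$, which is vacuous, and yet you then assert the opposite inequality $k\int_{S^2}|a|^2\le 0$ to conclude $a\equiv 0$. The final conclusion is right, but it does not follow from the inequality you state; the boundary term needs the minus sign. A minor related slip is the parenthetical about the normal component of $A$ contributing to the boundary integral: since $\supp A\subset B_r\subset\subset B_s$, the vector field $A$ vanishes identically on $\partial B_s$, so there is no such contribution to account for. With the sign corrected, the rest (cross-term estimate $O(s^{-1})$, Rellich, unique continuation) is fine and matches the paper.
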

\begin{proof} 
Let $B_r :=B(0,r)$.
Denote the $L^2(B_r)$ inner product by $(\cdot,\cdot)$. It follows from the
Green's formula of Lemma
\ref{MagGFI}, in the appendix that
\begin{align*}
\quad &( (L_{A,q} -k^2)u, u) - (u,(L_{A,0}-k^2)u) -(u,\ov{q} u ) \\
&= (u , (\p_n + iA\cdot n) u)_{L^2(\p B_r)}-  ((\p_n + iA\cdot n) u, u)_{L^2(\p
B_r)}.
\end{align*}
The first term on the left side vanishes and the second term is $(u,qu)$.
Notice also that the vector field $A$ vanishes along $\p B_r$. 
The above equation reduces thus to 
\begin{eqnarray*}
\Im \int_{\partial B_r}  \overline{u} \partial_n u dS= \Im \int_{B_r} q|u|^2dx\le 0.
\end{eqnarray*}
Using the asymptotic expansions in Proposition \ref{asymHE}, we get
\begin{eqnarray*}
& &\Im \int_{\partial B_r} 
\bigg( \overline{a}\frac{e^{-ik|x|}}{|x|} + O\bigg(\frac{1}{|x|^2}\bigg) \bigg)
\bigg( a\frac{ike^{ik|x|}}{|x|} + O\bigg(\frac{1}{|x|^2}\bigg)
\bigg)dS \\
&=& \Im \int_{\partial B_r} \bigg(|a|^2\frac{ik}{|x|^2} + O\bigg(\frac{1}{|x|^3}\bigg)\bigg)dS \\
&=& \Im \int_{|x|=1}\bigg(|a(\theta,\varphi)|^2\frac{ik}{r^2} + O\bigg(\frac{1}{r^3}\bigg)\bigg)
r^2sin\theta d\theta d\varphi.
\end{eqnarray*}
By taking the limit as $r \nuoli \infty$, we obtain that 
\begin{eqnarray*}
\int_{|x|=1} k |a(\theta,\varphi)|^2 sin\theta d\theta d\varphi \leq 0,
\end{eqnarray*}
and hence the far-field pattern $a$ vanishes identically. 

An application of Rellich's lemma, see Proposition \ref{RL} in  the appendix,
allows us to conclude that
$u \equiv 0$ outside $B_r$. The unique continuation principle, see Theorem
\ref{UCP} in the appendix,  implies that $u \equiv 0$ in $\R^3$.
\end{proof}

We now proceed to proving the existence of outgoing solutions to 
\eqref{eq:probR}.
In doing so, we shall first establish that the Dirichlet realization of $\Ld$ 
on a ball has a discrete spectrum.
Showing this is complicated by the presence of the imaginary part 
of $q$, which  makes $\Ld$ non-self-adjoint. We will use the notion of
relative compactness for operators to resolve this issue.

\begin{defn}
Let  $\mathcal{B}$ be a Banach space and let $T$ be a closed densely defined 
linear
operator on $\mathcal{B}$ such that $\spec(T) \neq \C$.  Assume that $A$ is
a linear operator on $\mathcal{B}$ such that  $D(T) \subset D(A)$. We say that 
$A$ is \textit{relatively
compact}
with respect to $T$ if for any sequence $\{u_n\} \subset D(T)$, such that both
$\{u_n\}$ and $\{Tu_n\}$ are bounded, $\{Au_n\}$ has a convergent subsequence.
\end{defn}

\begin{lem} \label{rcmpct} Let $\mathcal{B}$ be a Banach space, let $T$ be a 
closed densely defined linear
operator on $\mathcal{B}$, and assume that  $\lambda \notin \spec(T)$. Then 
$A$ is relatively compact with respect to $T$ if and only if 
$A(\lambda-T)^{-1}$ is
compact. 
\end{lem}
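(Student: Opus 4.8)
The statement is a standard equivalence from operator theory, and I would prove it by unwinding the definition of relative compactness in terms of the resolvent $R := (\lambda - T)^{-1}$, which is a bounded operator on $\mathcal{B}$ by the assumption $\lambda \notin \spec(T)$ (here $T$ closed makes $R$ defined on all of $\mathcal{B}$ and bounded by the closed graph theorem). The bridge between the two formulations is the observation that as $v$ ranges over the closed unit ball of $\mathcal{B}$, the vectors $u = Rv$ range precisely over a bounded subset of $D(T)$ on which $\{u\}$ and $\{Tu\}$ are \emph{simultaneously} bounded; conversely, every sequence $\{u_n\} \subset D(T)$ with $\{u_n\}$ and $\{Tu_n\}$ bounded can be written as $u_n = R v_n$ with $v_n = (\lambda - T)u_n$ bounded.

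First I would prove the ``$\Leftarrow$'' direction. Suppose $A R$ is compact. Let $\{u_n\} \subset D(T)$ with $\|u_n\| \le M$ and $\|T u_n\| \le M$. Put $v_n := (\lambda - T) u_n \in \mathcal{B}$; then $\|v_n\| \le |\lambda| M + M$, so $\{v_n\}$ is bounded, and $u_n = R v_n$. Hence $A u_n = A R v_n$, and since $AR$ is compact, $\{A R v_n\}$ has a convergent subsequence. Therefore $\{A u_n\}$ has a convergent subsequence, i.e.\ $A$ is relatively compact with respect to $T$. (Implicit here is that $D(T) \subset D(A)$, which is part of the hypothesis, so $A u_n$ makes sense; and $AR$ maps $\mathcal{B}$ into $\Ran(A)$ since $\Ran R = D(T) \subset D(A)$.)

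Next the ``$\Rightarrow$'' direction. Suppose $A$ is relatively compact with respect to $T$. To show $AR$ is compact, take any bounded sequence $\{v_n\} \subset \mathcal{B}$, say $\|v_n\| \le M$. Set $u_n := R v_n \in D(T)$. Then $\|u_n\| = \|R v_n\| \le \|R\| M$, so $\{u_n\}$ is bounded, and $T u_n = T R v_n = (\lambda - (\lambda - T)) R v_n = \lambda R v_n - v_n$, which has norm at most $|\lambda|\,\|R\| M + M$, so $\{T u_n\}$ is bounded as well. By relative compactness, $\{A u_n\} = \{A R v_n\}$ has a convergent subsequence. Since $\{v_n\}$ was an arbitrary bounded sequence, $AR$ is compact.

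\textbf{Main obstacle.} There is no deep obstacle; the proof is essentially bookkeeping with the resolvent identity $T R = \lambda R - I$ on $\mathcal{B}$ and the boundedness of $R$. The one point that deserves care is making sure every vector stays in the right domain: $R$ maps $\mathcal{B}$ onto $D(T)$, and $D(T) \subset D(A)$ by hypothesis, so $A R$ is defined on all of $\mathcal{B}$; and $T R v = \lambda R v - v$ is a genuine identity precisely because $R v \in D(T)$. I would state these domain remarks once at the start so that both implications read cleanly.
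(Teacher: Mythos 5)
Your proof is correct and uses exactly the paper's approach: the ``$\Leftarrow$'' direction is the same computation with $v_n = (\lambda - T)u_n$, and your ``$\Rightarrow$'' direction is the ``similar deduction'' the paper leaves to the reader, carried out explicitly via $u_n = (\lambda-T)^{-1}v_n$ and the identity $T(\lambda-T)^{-1} = \lambda(\lambda-T)^{-1} - I$.
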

\begin{proof}  Suppose that  we have a sequence $\{u_n\} \subset D(T)$ such 
that  
$\{u_n\}$ and $\{Tu_n\}$ are bounded. Then there is a constant $M$ such that
\begin{eqnarray*}
\| (\lambda - T) u_n\| \leq |\lambda|\|u_n\| + \|Tu_n\| < M < \infty,
\end{eqnarray*}
for all $n$, so that $\{ (\lambda - T) u_n\}$ is bounded.
Since  $A(\lambda-T)^{-1}$ is compact, the sequence 
\begin{eqnarray*}
\{ A(\lambda - T)^{-1} (\lambda - T) u_n \} = \{ Au_n \},
\end{eqnarray*}
has a convergent subsequence.

A similar deduction shows the opposite direction of the lemma.
\end{proof} 

We will now show that the Dirichlet realization of $\Ld$ on a ball  has a 
discrete
spectrum.

\begin{lem} \label{spec1}
Assume that $A$ and $q$ satisfy \eqref{eq:assumR1} and that
\[
\supp(q),\, \supp(A) \subset  B_r=\{x\in \R^3 \,\big|\, |x|<r\}.
\] 
The operator 
$\Ld:L^2(B_r)\nuoli L^2(B_r)$, equipped with the domain $H^2(B_r) \cap 
H^1_0(B_r)$,
is closed and has  discrete spectrum.
\end{lem}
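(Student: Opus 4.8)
The plan is to realize $L_{A,q}$ as a relatively compact perturbation of the Dirichlet Laplacian on $B_r$ and then to combine the Fredholm alternative with a numerical-range estimate coming from the standing hypothesis $\Im q\le 0$. First I would expand
\[
L_{A,q}=-\Delta+V,\qquad V:=-2iA\cdot\nabla-i(\nabla\cdot A)+|A|^2+q.
\]
Since $A\in\Wc^{1,\infty}(\R^3,\R^3)$ and $q\in\Lc^\infty(\R^3,\C)$, all coefficients of $V$ lie in $L^\infty(B_r)$, so $V$ is bounded from $H^1(B_r)$ to $L^2(B_r)$; in particular $\Dom(-\Delta)=H^2(B_r)\cap H^1_0(B_r)\subset\Dom(V)$. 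The Dirichlet Laplacian $-\Delta$ with this domain is self-adjoint, nonnegative, and has compact resolvent, which is classical via Lax--Milgram, elliptic regularity, and the compact embedding $H^2(B_r)\hookrightarrow L^2(B_r)$. Fixing $M>0$ and writing $G:=(-\Delta-iM)^{-1}\colon L^2(B_r)\to H^2(B_r)\cap H^1_0(B_r)$, which exists and is bounded because $iM\notin\spec(-\Delta)\subset[0,\infty)$, the operator $VG\colon L^2(B_r)\to L^2(B_r)$ is compact, being the composition of the bounded map $G\colon L^2\to H^2$, the compact embedding $H^2(B_r)\hookrightarrow H^1(B_r)$, and the bounded map $V\colon H^1\to L^2$; equivalently, by Lemma \ref{rcmpct}, $V$ is relatively compact with respect to $-\Delta$.

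Next I would use the factorization $L_{A,q}-iM=(I+VG)(-\Delta-iM)$, valid on $\Dom(-\Delta)$. Since $VG$ is compact, $I+VG$ is Fredholm of index zero, hence invertible as soon as it is injective. To obtain injectivity I would first check that $L_{A,q}-iM$ is injective on its domain: Green's formula (Lemma \ref{MagGFI}) gives $\Im(L_{A,q}u,u)_{L^2(B_r)}=\int_{B_r}(\Im q)|u|^2\le 0$ for $u\in H^2(B_r)\cap H^1_0(B_r)$, so that $\|(L_{A,q}-iM)u\|\,\|u\|\ge|\Im((L_{A,q}-iM)u,u)|\ge M\|u\|^2$, i.e.\ $\|(L_{A,q}-iM)u\|\ge M\|u\|$. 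Because $-\Delta-iM$ maps $\Dom(-\Delta)$ bijectively onto $L^2(B_r)$, injectivity of $(I+VG)(-\Delta-iM)$ forces $I+VG$ to be injective, hence bijective on $L^2(B_r)$ with bounded inverse. Therefore $L_{A,q}-iM$ is a bijection of $\Dom(-\Delta)$ onto $L^2(B_r)$ with inverse $G(I+VG)^{-1}$, which is bounded; an operator admitting a bounded everywhere-defined inverse is closed, so $L_{A,q}$ is closed on $H^2(B_r)\cap H^1_0(B_r)$ and $iM$ belongs to its resolvent set.

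Finally, $(L_{A,q}-iM)^{-1}=G(I+VG)^{-1}$ is compact, being the compact operator $G$ followed by the bounded operator $(I+VG)^{-1}$; a closed operator with nonempty resolvent set and compact resolvent has discrete spectrum, which completes the proof. I expect the only genuinely delicate point to be arranging the argument so that the non-self-adjointness of $L_{A,q}$, caused by $\Im q$, does no harm — a priori the resolvent set of a non-self-adjoint operator could be empty, and it is precisely the numerical-range estimate enforced by $\Im q\le 0$ that excludes this — together with the routine but essential classical mapping properties of the Dirichlet Laplacian on the ball.
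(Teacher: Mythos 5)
Your argument is correct, and the relative-compactness computation (split $V$ into a first-order term $-2iA\cdot\nabla$ and a bounded multiplier, compose the resolvent $L^2\to H^2$ with the compact embeddings $H^2\hookrightarrow H^1\hookrightarrow L^2$) is identical in substance to the paper's. Where you diverge is in how closedness and discreteness are deduced from relative compactness: the paper invokes a ready-made theorem (Theorem~11.2.6 of Davies) asserting that relatively compact perturbations preserve closedness and the essential spectrum, and then reads off that $L_{A,q}$ has no essential spectrum because the Dirichlet Laplacian has none. You instead work more by hand: you exploit the factorization $L_{A,q}-iM=(I+VG)(-\Delta-iM)$ and the numerical-range bound $\Im(L_{A,q}u,u)=\int(\Im q)|u|^2\le 0$ (which forces $\|(L_{A,q}-iM)u\|\ge M\|u\|$) to show directly that $I+VG$ is injective, hence invertible by the Fredholm alternative, hence that $iM$ is in the resolvent set and the resolvent is compact. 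Your route is more self-contained and has the side benefit of explicitly exhibiting a point in the resolvent set, which is the thing actually used later in the Lax--Phillips argument (the statement ``discrete spectrum'' only has teeth if the resolvent set is nonempty); the paper's route is shorter but outsources precisely that point to the cited theorem. Note also that your argument genuinely uses $\Im q\le 0$, whereas the paper's cited theorem would give the conclusion for any bounded $q$ — so the paper's approach is marginally more general at the cost of relying on an external reference.
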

\begin{proof} 
We let $L_0$ be the  operator $-\Delta$ on $L^2(B_r)$, equipped with the 
domain 
$H^2(B_r) \cap H^1_0(B_r)=:D(L_0)$.
We know that $L_0$ is self-adjoint with discrete spectrum, see \cite{Grubb1}.

We will show that $\Ld$ is a relatively compact perturbation of
$L_0$.  According to Theorem 11.2.6 in \cite{Davies1}, the essential spectrum 
is preserved under relatively compact
perturbations,  as is closedness.  Thus $\Ld$ will have no essential spectrum, 
since  $L_0$ has none.   By writing 
\[
L_{A,q}=-\Delta-2iA\cdot \nabla + p,\quad p=-i\nabla\cdot A+A^2+q\in 
L^\infty(B_r),
\]
we see that our task is  reduced to showing that $-2iA\cdot\nabla+p$ is
relatively compact with respect to  $L_0$.

Assume that $\lambda \notin \spec(L_0)$. By the criterion 
of Lemma \ref{rcmpct}, the operator $-2iA\cdot\nabla+p$ is relatively
compact with respect to $L_0$ if and only if  
\begin{equation*}
(-2iA\cdot\nabla+p)(\lambda I -L_0)^{-1}
\end{equation*}
is compact on $L^2(B_r)$.  We split this as
\begin{equation*}
-2iA\cdot\nabla(\lambda I -L_0)^{-1} + p(\lambda I -L_0)^{-1},
\end{equation*}
and show that both of the resulting operators are compact on 
$L^2(B_r)$.  
The resolvent operator $(\lambda I -L_0)^{-1} $ is continuous:  $L^2(B_r) 
\nuoli H^2(B_r) \cap
H^1_0(B_r)$. The latter space is however compactly imbedded into $L^2(B_r)$, by
the Rellich-Kondrachov theorem.
If we view $p$ as a multiplication operator, then it is 
continuous: $L^2(B_r) \nuoli L^2(B_r)$. This shows that the
second operator is compact.

The operator $2iA \cdot \nabla$ is continuous from $H^2(B_r) \nuoli
H^1(B_r)$, since $A$ is Lipschitz. The latter space is however compactly 
imbedded into $L^2(B_r)$.
The first operator is therefore also compact.

\end{proof}

We are now ready to prove the existence of   outgoing solutions to 
\eqref{eq:probR}, 
using the Lax-Phillips method.

\begin{thm} \label{LPmethod}
Assume that $A$ and $q$ satisfy \eqref{eq:assumR1}. Let $k>0$. 
Then  for any  $f \in \Lc^2(\R^3)$, there exists 
$u \in H^2_{loc}(\R^3)$, satisfying the Sommerfeld radiation condition,  
that solves
\begin{equation*}
(\Ld - k^2) u = f\quad \textrm{in}\quad \R^3. 
\end{equation*}
\end{thm}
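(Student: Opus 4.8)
The plan is to apply the Lax--Phillips method, reducing the problem on all of $\R^3$ to a problem on a large ball $B_R$ containing $\supp(A)\cup\supp(q)$, where we can invoke the Fredholm theory coming from Lemma \ref{spec1}, and then gluing the interior solution to the free outgoing resolvent $(-\Delta-k^2-i0)^{-1}$ of Lemma \ref{cont1} and Corollary \ref{cont3} outside. Fix $R>0$ with $\supp(A)\cup\supp(q)\subset B_R$, and choose a cutoff $\chi\in C^\infty_0(\R^3)$ with $\chi=1$ on $\supp(A)\cup\supp(q)$ and $\supp(\chi)\subset B_R$. Pick a second cutoff $\eta\in C^\infty_0(\R^3)$ with $\eta=1$ on $\supp(\chi)$ and $\supp\eta\subset B_R$. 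The idea is to seek $u$ in the form
\begin{equation*}
u = G_0 * (f - \varphi) + v,
\end{equation*}
where $v$ is supported in $B_R$ and $\varphi$ is an auxiliary source to be determined, or, more directly, to build the outgoing resolvent $R(k):=(\Ld-k^2)^{-1}_{\mathrm{out}}$ for the full operator as a perturbation of $R_0(k):=(-\Delta-k^2-i0)^{-1}$.

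Concretely, I would write $\Ld - k^2 = (-\Delta - k^2) + V$, where $V := -2iA\cdot\nabla + p$ with $p=-i\nabla\cdot A+A^2+q$, and note that $V = \eta V \eta$ since the coefficients are supported where $\eta=1$. Applying $R_0(k)$ to the equation $(\Ld-k^2)u=f$ gives, formally, $u + R_0(k)Vu = R_0(k)f$. Since $V = \eta V \eta$, it suffices to understand the operator $\eta R_0(k) \eta V$ (equivalently $V R_0(k)\eta$) on $L^2(B_R)$: by Corollary \ref{cont3}, $\eta R_0(k)\eta : L^2(B_R)\to H^2(B_R)$ is continuous, hence $V\eta R_0(k)\eta : L^2(B_R)\to L^2(B_R)$ is compact, because $V$ maps $H^2(B_R)$ continuously into $H^1(B_R)$ (here we use that $A$ is Lipschitz), which embeds compactly into $L^2(B_R)$ by Rellich--Kondrachov. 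So $I + V\eta R_0(k)\eta$ is a Fredholm operator of index zero on $L^2(B_R)$, and by the Fredholm alternative it is invertible provided its kernel is trivial.

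The triviality of the kernel is where the earlier uniqueness machinery enters: if $w\in L^2(B_R)$ solves $w + V\eta R_0(k)\eta w = 0$, then $\tilde w := -R_0(k)\eta w \in H^2_{loc}(\R^3)$ is, by Remark \ref{remGF} and Lemma \ref{reg1}, an outgoing solution of $(\Ld - k^2)\tilde w = 0$ in $\R^3$ (one checks $(-\Delta-k^2)\tilde w = -\eta w$ and $V\tilde w = V\eta R_0(k)\eta w = -w$, so the sum is $-\eta w + (\text{stuff supported in }\{\eta=1\})$; a careful bookkeeping of the cutoffs shows the residual vanishes). Theorem \ref{uniq1} then forces $\tilde w \equiv 0$, hence $w = -(\ldots) \equiv 0$. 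Therefore $I + V\eta R_0(k)\eta$ is invertible, and we may set
\begin{equation*}
u := R_0(k)f - R_0(k)\eta\,\bigl(I + V\eta R_0(k)\eta\bigr)^{-1} V R_0(k) f .
\end{equation*}
One then verifies directly that $u\in H^2_{loc}(\R^3)$ (by Corollary \ref{cont3} and Lemma \ref{reg1}), that $u$ is outgoing (since $R_0(k)$ produces outgoing functions and the correction term is $R_0(k)$ applied to a compactly supported function), and that $(\Ld-k^2)u=f$.

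The main obstacle is the careful bookkeeping with the cutoff functions: one must arrange $\chi,\eta$ so that all the compositions $V\eta R_0\eta$, $VR_0$, etc.\ genuinely live on $L^2(B_R)$ and so that, when reconstituting $(\Ld-k^2)u$, the terms where a derivative of a cutoff would fall on the wrong factor all cancel, using that $V$ is supported strictly inside the region where the cutoffs equal $1$. Once the algebra is set up so that this localization is clean, the compactness argument is exactly the one already used in Lemma \ref{spec1}, and the kernel-triviality step is exactly Theorem \ref{uniq1}, so no genuinely new estimate is needed beyond what the excerpt has established.
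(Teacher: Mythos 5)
Your proposal is correct, but it follows a genuinely different route from the paper. The paper implements the Lax--Phillips two-component scheme: it seeks $u=\varphi w + (1-\varphi) v$ where $w$ solves an interior Dirichlet problem $(\Ld-\lambda)w=g$ on a ball (requiring $\lambda$ to avoid the spectrum of the Dirichlet realization of $\Ld$, i.e.\ Lemma \ref{spec1}), $v$ is the free outgoing solution, and a Fredholm equation $(I+T)g=f$ for the auxiliary source is derived. The injectivity of $I+T$ is then established by invoking Theorem \ref{uniq1} to get $u\equiv0$, and then unwinding the cutoffs via an imaginary-part argument using $\Im\lambda\ne0$. You instead write $\Ld-k^2=(-\Delta-k^2)+V$ with compactly supported first-order perturbation $V$, pass to the Lippmann--Schwinger equation $(I+V\eta R_0(k)\eta)w=VR_0(k)f$ for $w:=Vu$ on $L^2(B_R)$, and close the argument with the Fredholm alternative. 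The main structural gain of your route is that it never touches Lemma \ref{spec1}: there is no interior Dirichlet realization, no choice of $\lambda$, and no relative-compactness spectral analysis. Your injectivity step is also tidier -- once Theorem \ref{uniq1} gives $\tilde w:=-R_0(k)\eta w\equiv0$, you immediately get $w=V\tilde w=0$, with no auxiliary integration-by-parts estimate. What the Lax--Phillips scheme buys in return is some extra flexibility (it does not require the perturbation of $-\Delta$ to act as a genuinely compact operator through the free resolvent), but for compactly supported $W^{1,\infty}\times L^\infty$ potentials both methods apply and both lean on the same two inputs, Theorem \ref{uniq1} and Corollary \ref{cont3}.

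One small slip to fix: $V=-2iA\cdot\nabla+p$ does \emph{not} map $H^2(B_R)$ into $H^1(B_R)$, because $p=-i\nabla\cdot A+A^2+q$ is only $L^\infty$ and the product $pu$ need not lie in $H^1$. The compactness of $V\eta R_0(k)\eta$ still holds; you just have to split $V$ into its two pieces, as Lemma \ref{spec1} does: $-2iA\cdot\nabla$ maps $H^2(B_R)\to H^1(B_R)$ and composes with the compact embedding $H^1(B_R)\hookrightarrow L^2(B_R)$, while for the zeroth-order piece one first uses the compact embedding $H^2(B_R)\hookrightarrow L^2(B_R)$ and then multiplication by $p\in L^\infty$. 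With this correction the argument is complete, and the support bookkeeping you flag -- using that $w$, being in the range of $V$, is supported where $\eta\equiv1$, so that $\eta w=w$ and the residual term $(1-\eta)w$ vanishes -- is exactly right.
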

\begin{proof} 
Let $B_r=\{x\in\R^3\,\big |\,|x|<r\}$ be such that 
$\supp(q), \supp(A) \subset  B_r$. Let $s>r$ be such that $\supp(f)\subset 
B_s$.  
We pick a function $\varphi \in C_0^{\infty}(\R^3,[0,1])$ such that  $\varphi = 
1$ on $B_r$ and
 $\supp(\varphi) \subset B_s$.

Let $\lambda \in \C$, $\Im \lambda \neq 0$, be such that  $\lambda$ avoids the 
spectrum of the Dirichlet realization of $\Ld$ on the ball $B_s$. 
The existence of such $\lambda$ is guaranteed  by Lemma \ref{spec1}.

We begin by looking for a solution $u$ of the form
\begin{equation}
u = \varphi w + (1-\varphi) v \label{eq:sform}.
\end{equation}
Here $w \in H^2(B_s) \cap H^1_0(B_s)$ is the 
unique solution to the Dirichlet problem
\begin{eqnarray*}
 (\Ld-\lambda)w &=& g \hspace{5mm} \text{in $B_s$} ,\\
 w|_{\partial B_s} &=& 0,
\end{eqnarray*}
where $g \in L^2(\R^3)$, with $\supp(g) \subset B_s$.
And $v$ is the unique outgoing solution of the equation,
\begin{equation}
 (-\Delta - k^2)v = g \quad \text{ in $\R^3$}. \label{eq:LPmethod1}
\end{equation}
Remark \ref{remGF} gives an explicit formula for $v$, and according to 
Corollary \ref{cont1}, we know that $v \in \Hl^2(\R^3)$.  

Inserting $u$ into the original equation will result in an operator equation
for the unknown function $g$. Abbreviating $L:=(\Ld - k^2)$, we 
have\footnote{Here
we use the following bracket notation for the
commutator operator $[A,B] := AB - BA$.}
\begin{eqnarray*}
L u &=& L(\varphi w) + L((1-\varphi)v) \\
&=& [L,\varphi]w + \varphi L w  + [L,(1-\varphi)]v + (1-\varphi) L v \\
&=& [L,\varphi]w + \varphi (\Ld -\lambda)w + \varphi(\lambda-k^2)w + 
[L,(1-\varphi)]v + (1-\varphi)g \\
&=& [L,\varphi]w + \varphi g + \varphi(\lambda-k^2)w + [L,(1-\varphi)]v + 
(1-\varphi)g
\end{eqnarray*}
Noting that the commutator of $k^2$ and $\varphi$ is zero, we get from the
above,
\begin{eqnarray*}
L u &=& [\Ld,\varphi]w + \varphi(\lambda-k^2)w + [\Ld,(1-\varphi)]v + g \\
    &=& [\Ld,\varphi]w + \varphi(\lambda-k^2)w - [\Ld,\varphi]v + g.
\end{eqnarray*}
By setting $Tg := \varphi(\lambda-k^2)w+[\Ld, \varphi](w-v)$, we see that $g$ 
is to
satisfy the operator equation
\begin{eqnarray}
(I+T)g=f. \label{eq:fred1}
\end{eqnarray}
Our problem of finding a solution of the special form \eqref{eq:sform} is thus 
reduced 
to showing that the equation \eqref{eq:fred1} has a solution.

Our aim is to use the Fredholm theory. In order to do this we need to show that 
$T:L^2(B_s) \nuoli L^2(B_s)$ is compact. 
Notice that the first term of $T$ is 
$\varphi(\lambda-k^2)(\Ld-\lambda)^{-1}g$.
But since the resolvent $(\Ld-\lambda)^{-1}:L^2(B_s) \nuoli H^2(B_s)$ is bounded and
since the inclusion map: $H^2(B_s) \to L^2(B_s)$ is compact, we see that the 
first term of $T$ is compact on $L^2(B_s)$. 

For the second term of $T$, we first note that
$(-\Delta -k^2-i0)^{-1}:L^2(B_s) \nuoli H^2(B_s)$ is
bounded by Corollary \ref{cont3}. Next note that 
$\supp([\Ld,\varphi]v) \subset B_s$, since $\supp(\varphi) \subset B_s$. The 
first order 
operator
$[\Ld,\varphi]$ is explicitly  given by 
\begin{eqnarray*}
[\Ld,\varphi]  = -\Delta \varphi - 2 \nabla \varphi \cdot \nabla - 2iA\cdot
\nabla \varphi.
\end{eqnarray*}
It also has $W^{1,\infty}$ coefficients, so that it maps $H^2(B_s)$ to
$H^1(B_s)$ continuously. Now $H^1(B_s)$ is compactly embedded
in $L^2(B_s)$ and hence we see that $T$ is compact on $L^2(B_s)$.

According to the Fredholm theory,  we need only to show that $I+T$ is 
injective, to
have the surjectivity and thus a solution to \eqref{eq:fred1}, for a given $f$. 

Assume that $(I+T)g=0$. Theorem \ref{uniq1} gives that 
\begin{eqnarray}
u = \varphi w + (1- \varphi)v = 0 \quad \text{in $\R^3$}. \label{eq:vweq}
\end{eqnarray}
We want to show that $g \equiv 0$. This follows if $w \equiv 0$, which we will
prove next.
First note that \eqref{eq:vweq} gives $\varphi w = (\varphi - 1)v$, 
so that 
\begin{equation}
\label{eq_LP1}
w=0\quad\textrm{when}\quad \varphi=1.
\end{equation}
 In particular, $w=0$ on $B_r$. This gives that
\[
g=(\Ld - \lambda) w = (-\Delta -\lambda)w 
 \]
in $B_s$, because $\supp(A), \supp(q)  \subset B_r$. We also have that
\begin{eqnarray*}
 (-\Delta -k^2)v &=& g,
\end{eqnarray*}
in $B_s$. Subtracting the last two equations and using that $v=\varphi(v-w)$, 
we get 
\begin{eqnarray*}
  (-\Delta -k^2)v + (-\Delta -\lambda)(-w)
 &=&  (-\Delta -\lambda)(v-w) - k^2v + \lambda v \\
 &=&  (-\Delta -\lambda)(v-w) - (k^2 - \lambda)\varphi (v-w) \\
 &=& 0.
\end{eqnarray*}
Set $r := v-w$. Now $(-\Delta -\lambda)r - (k^2 - \lambda)\varphi r = 0$
in $B_s$. Multiplying this by $\overline{r}$, using integration by parts and
noting that $r|_{\partial B_s} = -w |_{\partial B_s} = 0$, because of
\eqref{eq:vweq} we get
\begin{eqnarray*}
\int_{B_s} |\nabla r|^2 - \lambda |r|^2dx =
\int_{B_s} (k^2- \lambda) \varphi|r|^2dx.
\end{eqnarray*}
Taking the imaginary part  gives
\begin{eqnarray*}
  \Im \lambda \int_{B_s} (1-\varphi)|r|^2dx= 0.
\end{eqnarray*}
From this we see that
$r=0$ in the region where $\varphi\ne 1$. It follows that $v=\varphi r=0$ when 
$\varphi\ne 1$. 
Hence, 
\begin{equation}
\label{eq_LP2}
w=v-r=0\quad\textrm{when}\quad \varphi\ne 1. 
\end{equation}
Combining \eqref{eq_LP1} and \eqref{eq_LP2}, we see that $w=0$ in $B_s$.  

\end{proof}

In summary, we see that Theorem \ref{uniq1} and Theorem \ref{LPmethod} show the 
 existence and 
uniqueness of outgoing solutions for the problem \eqref{eq:probR}.

\subsection{The magnetic Schr\"odinger equation in a half space}
\label{sec:dirHS}

The main objective of this subsection is to extend the existence and uniqueness
results of the previous subsection to the half space case.  

Let $k>0$ and let 
\begin{equation}
\label{eq_sec2_0}
A\in \Wc^{1,\infty}(\CHS,\R^3), \quad q\in \Lc^\infty(\CHS,\C),\quad \Im 
q\le 0.
\end{equation}
Given $f\in\Lc^2(\CHS,\C) :=\{f\in L^2(\CHS,\C)\;\big|\,\supp(f)\subset \CHS \text{ compact}\}$.
we first prove the existence and uniqueness 
of an outgoing solution to the following problem,
\begin{equation}
\label{eq_sec2_1}
\begin{aligned}
&(L_{A,q}-k^2)u=f\quad\textrm{in}\quad\HS,\\
&u|_{\p \HS}=0.
\end{aligned}
\end{equation}
We will reduce this problem to the case of $\R^3$, by using an extension argument
and then use Theorems \ref{uniq1} and \ref{LPmethod}.
The extension argument relies on the following Lemma, which will also be of
importance later (see also Theorem 1.3.3 in \cite{Horm_book_1}).

\newtheorem{BorelL}[thm]{Lemma}
\begin{BorelL} \label{BorelL}
Let $v \in C^{\,0,1}(\R^2)$, with compact support.
Then there is a $\psi \in C^{1,1}(\R^3)$, with compact support for which 
\begin{equation} \label{eq_Fcrit}
\psi(x,0) = 0 \quad \text{ and }  \quad\partial_3 \psi(x,0) = v(x),
\end{equation}
for $x \in \R^2$.
\end{BorelL}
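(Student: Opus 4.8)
The plan is to construct $\psi$ explicitly as a polynomial in $x_3$ with coefficients depending on $x=(x_1,x_2)$, truncated by a cutoff so as to keep compact support. First I would try the naive ansatz $\psi(x,x_3) = x_3\, v(x)$. This certainly satisfies $\psi(x,0)=0$ and $\partial_3\psi(x,0)=v(x)$, and since $v$ has compact support in $\R^2$, multiplying by a cutoff $\chi(x_3)\in C_0^\infty(\R)$ with $\chi\equiv 1$ near $0$ gives a compactly supported function on $\R^3$. The only thing to check is regularity: we need $\psi\in C^{1,1}(\R^3)$, i.e. $\psi$ is $C^1$ with Lipschitz first derivatives. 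Computing, $\partial_3\psi = \chi(x_3)v(x) + x_3\chi'(x_3)v(x)$ and $\partial_j\psi = x_3\chi(x_3)\,\partial_j v(x)$ for $j=1,2$. Since $v\in C^{0,1}(\R^2)$, the derivatives $\partial_j v$ exist only in the $L^\infty$/a.e.\ sense, so $\partial_j\psi$ is bounded and Lipschitz in $x$ but we must confirm it is genuinely continuous and that the mixed behaviour is Lipschitz.

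The subtlety — and the step I expect to be the main obstacle — is precisely this regularity bookkeeping: $v$ is only Lipschitz, not $C^1$, so $\partial_j v$ is merely an $L^\infty$ function, and a priori $\partial_j\psi = x_3\chi(x_3)\partial_j v(x)$ need not be continuous in $x$. The resolution is that the factor $x_3$ vanishes on the boundary, which is exactly where the trouble would occur: one shows $\partial_j\psi$ is continuous because it is the product of the continuous (indeed Lipschitz) function $x_3\chi(x_3)$ with the bounded measurable $\partial_j v$, and more carefully that $\psi$ is differentiable in $x_j$ everywhere with derivative given by this formula, using that $v$ itself is Lipschitz (so difference quotients of $\psi$ in $x_j$ converge). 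Then Lipschitz continuity of $\nabla\psi$ follows: $\partial_3\psi$ is Lipschitz since $v$ and $\chi$ are Lipschitz and bounded; $\partial_j\psi = x_3\chi(x_3)\partial_j v$ is Lipschitz in $x_3$ (the prefactor is Lipschitz, $\partial_j v$ bounded) and Lipschitz in $x$ because $\partial_j v\in L^\infty$ is the a.e.\ derivative of the Lipschitz function $v$, hence $|\partial_j v(x) - \partial_j v(x')|$ need not be controlled pointwise — so here one instead argues that $x_3\chi(x_3)\partial_j v(x)$ is Lipschitz jointly by writing increments of $\psi$ and using that $\psi(\cdot,x_3) = x_3\chi(x_3)v(\cdot)$ has Lipschitz gradient in $x$ with constant proportional to $|x_3|$, combined with the $x_3$-direction estimates.

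Concretely I would carry out the steps as follows. Step one: set $\psi(x,x_3) := x_3\,\chi(x_3)\,v(x)$ where $\chi\in C_0^\infty(\R)$, $\chi\equiv 1$ on $[-1,1]$; note compact support of $\psi$ follows from compact support of $v$ and $\chi$. Step two: verify $\psi(x,0)=0$ trivially and $\partial_3\psi(x,0) = \big(\chi(x_3)v(x) + x_3\chi'(x_3)v(x)\big)\big|_{x_3=0} = v(x)$, establishing \eqref{eq_Fcrit}. Step three: show $\psi\in C^1(\R^3)$ by checking each partial derivative exists and is continuous — for $\partial_3$ this is immediate since $\psi$ is smooth in $x_3$ for fixed $x$ with continuous dependence on $x$; for $\partial_j$, $j=1,2$, use that $v$ is Lipschitz so the difference quotient $\big(\psi(x+he_j,x_3)-\psi(x,x_3)\big)/h = x_3\chi(x_3)\big(v(x+he_j)-v(x)\big)/h$ converges a.e.\ and is dominated, identifying the limit with $x_3\chi(x_3)\partial_j v(x)$, which is continuous as a product of a continuous function vanishing at $x_3=0$ and a bounded function. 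Step four: show $\nabla\psi$ is Lipschitz — bound $|\nabla\psi(x,x_3) - \nabla\psi(x',x_3')|$ componentwise using $\|v\|_{C^{0,1}}$, $\|\chi\|_{C^1}$, and boundedness of the support, where the $x_3$-prefactors provide the needed control near the boundary. This yields $\psi\in C^{1,1}(\R^3)$ with compact support, completing the proof.
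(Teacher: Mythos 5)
Your proposed ansatz $\psi(x,x_3) = x_3\,\chi(x_3)\,v(x)$ does not produce a $C^1$ function when $v$ is merely Lipschitz, and the regularity ``resolution'' you sketch is based on a misconception about where the trouble lies. You argue that $\partial_j\psi = x_3\chi(x_3)\,\partial_j v(x)$ is continuous ``because it is the product of the continuous function $x_3\chi(x_3)$ with the bounded measurable $\partial_j v$,'' and you locate the danger at $x_3 = 0$ where the prefactor vanishes. But that is exactly backwards: at $x_3 = 0$ the product is indeed continuous (a bounded function times something tending to zero), whereas at any $x_3 \ne 0$ the prefactor is a nonzero constant in $x$, so $\partial_j\psi$ inherits precisely the discontinuities of $\partial_j v$. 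A concrete counterexample: take $v(x_1,x_2) = |x_1|$ near the origin. Then for $x_3\ne 0$, $\partial_1\psi = x_3\chi(x_3)\operatorname{sgn}(x_1)$, which has a jump across $x_1=0$ and is not even defined there. So $\psi\notin C^1(\R^3)$, let alone $C^{1,1}$.

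The paper's proof gets around exactly this obstruction by \emph{mollifying at a scale tied to $x_3$}: one sets $u(x,t) = (v * \varphi_t)(x)$ for $t\ne 0$ and $u(x,0)=v(x)$, and then $\Psi(x,t) = t\,u(x,t)$. For each fixed $t\ne 0$ the slice $u(\cdot,t)$ is $C^\infty$ in $x$, so $\partial_j\Psi = t\,\partial_j u(x,t)$ is genuinely continuous, and the change-of-variables formula $\partial_j\Psi = \int v(x-ty)\,\partial_j\varphi(y)\,dy$ exhibits it as Lipschitz in $(x,t)$ because $v$ is Lipschitz. The $t$-derivative $\partial_t\Psi = u + t\partial_t u$ requires the additional observation that $t\,\partial_t u = -\int v(x-ty)(\nabla\varphi(y)\cdot y + 2\varphi(y))\,dy$ is likewise Lipschitz. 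This is the idea your proposal is missing: you cannot take $a(x,x_3) = v(x)$ independently of $x_3$; the coefficient multiplying $x_3$ must smooth out as $x_3$ moves away from zero, and the mollification does this at exactly the right rate.

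Your steps one and two (compact support and the boundary identities) are fine, but step three's claim that $\partial_j\psi$ is continuous is false, and step four's Lipschitz estimates cannot be salvaged because the object being estimated is not even continuous. The proof needs the $x_3$-dependent regularization to go through.
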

\begin{proof} Let $\varphi$ be the usual mollifier function in $\R^2$, 
with 
$\varphi \in C^\infty_0(\R^2)$, $\varphi \geq 0$ and $\int \varphi =1$.
Set
$\varphi_t(x) = 1/t^2\varphi(x/t)$. We define $u(x,t):= (v * \varphi_t)(x)$, for
$t\neq0$ and $u(x,0):= v(x) = (v*\delta)(x)$. More explicitly  
\begin{align} \label{eq_udef}
u(x,t) = \frac{1}{t^2} \int_{\R^2} v(y) \varphi\Big( \frac{x-y}{t} \Big)dy
= \int_{\R^2} v(x-ty) \varphi(y) dy,
\end{align}
for $t\neq0$. From the right hand side we see that $u$ is Lipschitz in $(x,t)$,
because $v$ is Lipschitz.
We define $\Psi$ as
\begin{align} \label{eq_Fdef} 
\Psi(x,t) := tu(x,t).
\end{align}
Notice that $\Psi$ satisfies the first condition in \eqref{eq_Fcrit}.

Next we show that the partial derivatives of $\Psi$ are Lipschitz.
When $t\neq0$
we have, using \eqref{eq_udef} that
\begin{align*}
\p_{x_i}(tu) &= t \int_{\R^2} v(y) (\p_i \varphi)\Big( \frac{x-y}{t}
\Big)\frac{1}{t^3} dy \\
&=   \int_{\R^2} v(y) (\p_i \varphi)\Big( \frac{x-y}{t}
\Big)\frac{1}{t^2} dy \\
&= \int_{\R^2} v(x-ty) \p_i \varphi(y) dy.
\end{align*}
It follows that this identity also holds when $t=0$.
The right hand side of this identity is easily seen to 
be Lipschitz in both $x$ and $t$, since $v$ is
Lipschitz. It follows that $\p_{x_i}(tu)$ is Lipschitz in $\R^3$.

The next step is to show that $\p_t \Psi$ is Lipschitz. 
To see that $\p_t\Psi=\p_t (tu)$ is continuous, we compute the derivative at $t=0$
\begin{align*}
\p_t (tu)|_{t=0} =\lim_{h\to0} \frac{hu(x,h)-0\,u(x,0)}{h} = u(x,0) = v(x),
\end{align*}
Notice that this shows that  $\Psi$ also satisfies the second 
condition in \eqref{eq_Fcrit}. And further that $\p_t (tu)$ is Lipschitz in $x$
when $t=0$.

For $t\neq0$,  observe firstly that
$\p_t(tu) = u + t\p_t u$. Thus we need only to check that the later term is
Lipschitz. We write using \eqref{eq_udef}
\begin{align} \label{eq_ptu} 
\p_t u 
= -\int_{\R^2} v(x-ty) \Big( \nabla \varphi(y) \cdot \frac{y}{t} + \varphi(y)
\frac{2}{t} \Big) dy.
\end{align} 
This gives that
\begin{align*} 
t\p_t u 
= -\int_{\R^2} v(x-ty) \big(\nabla \varphi(y) \cdot y + 2\varphi(y) \big) dy,
\end{align*} 
which is easily seen to be Lipschitz in both $x$ and $t$, because $v$ is
Lipschitz.

To obtain $\psi$ we pick a $\chi \in \C^\infty_0(\R^3)$, s.t. $\chi|\supp(v)=1$.
Then $\psi:=\chi\Psi \in C^{1,1}(\R^3)$, $\chi\Psi|_{t=0}=0$ and
$\p_t(\chi\Psi)|_{t=0}= (\chi \p_t \Psi)|_{t=0} = v$.

\end{proof}

We use the above Lemma to show that, it is sufficient to consider existence and
uniqueness in problem
\eqref{eq_sec2_1}, for potentials $A$ for which $\p_3 A|_{ x_3 = 0 }=0$. To
see this notice that Lemma \ref{BorelL} guarantees the existence of a 
$\psi\in C^{1,1}(\R^3,\R)$ with compact support, for which
$\psi|_{x_3=0}=0$ and $\nabla \psi|_{x_3=0} = (0,0,-A_3)|_{x_3=0}$. 
A straight forward computation shows that 
\[
e^{-i\psi}L_{A,q} e^{i\psi}=L_{A+\nabla\psi,q},
\]
Using this
we see that $u$ is an outgoing solution to the problem \eqref{eq_sec2_1} if and 
only if $\tilde u=e^{-i\psi}u$ is an outgoing solution to the problem,
\begin{align*}
&(L_{A+\nabla\psi,q}-k^2)\tilde 
u=e^{-i\psi}f\quad\textrm{in}\quad\HS,\\
&\tilde u|_{\p \HS}=0.
\end{align*}
We can thus, without loss of generality, assume that $A_3=0$ 
along $\p\HS$, when showing that the solution of problem of  \eqref{eq_sec2_1} 
exists and is unique. 

We have the following result. 
\begin{thm} \label{HS1}
Let $A$ and $q$ satisfy \eqref{eq_sec2_0} 
Then  for any  $f \in \Lc^2(\CHS)$, there exists a unique outgoing solution
$u \in H^2_{loc}(\ov{\HS})$
to the problem \eqref{eq_sec2_1}. 
\end{thm}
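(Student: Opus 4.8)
The plan is to reduce the half-space Dirichlet problem \eqref{eq_sec2_1} to the whole-space problem \eqref{eq:probR} treated in Theorems \ref{uniq1} and \ref{LPmethod}, via an odd/even reflection across the hyperplane $\{x_3=0\}$. As remarked above, after a gauge transformation by the $\psi$ supplied by Lemma \ref{BorelL} we may assume without loss of generality that $A_3=0$ along $\p\HS$; keeping this normalization is what will make the reflected coefficients regular enough. I would define the reflection $x^*=(x_1,x_2,-x_3)$ and set, for $x_3>0$,
\[
\tilde A_1(x)=A_1(x^*),\quad \tilde A_2(x)=A_2(x^*),\quad \tilde A_3(x)=-A_3(x^*),\quad \tilde q(x)=q(x^*),
\]
so that $\tilde A$ agrees with $A$ on $\HS$. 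The choice of signs is dictated by the requirement that $L_{\tilde A,\tilde q}$ commute with the reflection acting on functions by $(Ru)(x)=-u(x^*)$ (odd reflection): one checks that $(-i\p_3+\tilde A_3)$ is odd-compatible while $(-i\p_j+\tilde A_j)$, $j=1,2$, are even-compatible, hence $L_{\tilde A,\tilde q}$ maps odd functions to odd functions. The condition $A_3|_{x_3=0}=0$ is precisely what guarantees $\tilde A_3\in W^{1,\infty}(\R^3)$ (no jump in $\tilde A_3$ across $\{x_3=0\}$; its normal derivative may jump, but that is harmless for $W^{1,\infty}$), and $\tilde A_1,\tilde A_2,\tilde q$ are automatically in the right classes since they are even. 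Also $\tilde f$ defined by odd reflection of $f$ lies in $\Lc^2(\R^3)$ and $\Im\tilde q\le 0$.

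Next I would invoke Theorem \ref{LPmethod} to obtain the unique outgoing solution $\tilde u\in H^2_{loc}(\R^3)$ of $(L_{\tilde A,\tilde q}-k^2)\tilde u=\tilde f$ in $\R^3$. The key observation is that $R\tilde u$ (i.e. $x\mapsto -\tilde u(x^*)$) is also an outgoing solution of the same equation with the same right-hand side: indeed $\tilde f$ is odd by construction, $L_{\tilde A,\tilde q}$ commutes with $R$, and the Sommerfeld condition \eqref{eq:SRC} is invariant under the isometry $x\mapsto x^*$. By the uniqueness part (Theorem \ref{uniq1}), $R\tilde u=\tilde u$, i.e. $\tilde u$ is odd, so in particular $\tilde u|_{x_3=0}=0$. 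Then $u:=\tilde u|_{\CHS}$ solves $(L_{A,q}-k^2)u=f$ in $\HS$ with $u|_{\p\HS}=0$, belongs to $H^2_{loc}(\CHS)$, and satisfies the radiation condition; this gives existence. For uniqueness, if $u$ solves \eqref{eq_sec2_1} with $f=0$ and is outgoing, extend it by odd reflection to $\tilde u$ on $\R^3$; since $u|_{\p\HS}=0$ and, using the equation, the traces of $u$ and $\p_3 u$ match up across $\{x_3=0\}$ in the appropriate weak sense, $\tilde u\in H^2_{loc}(\R^3)$ is a weak solution of $(L_{\tilde A,\tilde q}-k^2)\tilde u=0$ that is outgoing, hence $\tilde u\equiv 0$ by Theorem \ref{uniq1}, so $u\equiv0$.

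The main obstacle I anticipate is the gluing/regularity step: verifying that the odd reflection $\tilde u$ of an $H^2_{loc}(\CHS)$ solution vanishing on $\{x_3=0\}$ is genuinely in $H^2_{loc}(\R^3)$ and is a weak (hence, by Lemma \ref{reg1}, strong) solution across the interface. This requires checking that no singular distributional terms supported on $\{x_3=0\}$ appear when one differentiates $\tilde u$ twice — which works because $u$ and its odd extension have matching Dirichlet traces ($u|_{x_3=0}=0$, so the even-order tangential data and the odd-order normal data are consistent), and because the coefficient $\tilde A_3$ was arranged to be continuous. One must also be careful that the Lipschitz-but-not-$C^1$ nature of $\tilde A_3$ near $\{x_3=0\}$ does not spoil membership of $\tilde A$ in $\Wc^{1,\infty}(\R^3,\R^3)$; this is where the normalization $A_3|_{\p\HS}=0$ is essential and should be stated carefully. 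The radiation-condition invariance and the commutation of $L_{\tilde A,\tilde q}$ with $R$ are routine once the sign conventions are fixed, and the outgoing/incoming distinction is preserved since $|x^*|=|x|$.
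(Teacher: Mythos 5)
Your proposal is correct and follows essentially the same route as the paper: gauge-normalize so that $A_3|_{x_3=0}=0$ via Lemma \ref{BorelL}, extend the coefficients by even/odd reflection, invoke Theorems \ref{LPmethod} and \ref{uniq1} on $\R^3$, establish oddness of the whole-space solution, and then restrict to $\HS$ (for existence) or reflect oddly and verify the distributional gluing across $\{x_3=0\}$ (for uniqueness). The one cosmetic difference is that you derive oddness by the symmetry argument $R\tilde u=\tilde u$ (uniqueness applied to $\tilde u$ and $R\tilde u$), whereas the paper decomposes $\tilde u=\tilde u_e+\tilde u_o$ and shows the even part solves the homogeneous radiating problem; these are equivalent.
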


\begin{proof}

We shall first prove the existence. 
In doing so, we shall reduce the problem \eqref{eq_sec2_1} to all of $\R^3$ by 
making use of appropriate even and odd extensions of the coefficients of the 
operator $L_{A,q}$. 
The discussion preceding the Lemma \ref{BorelL} shows that we may without loss of generality,
assume that $A_3=0$  along $\p\HS$. 

We extend the potentials $A=(A_1,A_2,A_3)$ and $q$,  and the source term 
$f$ to the whole of $\R^3$. Let $\tilde x := (x_1,x_2,-x_3)$.
For $A_1$, $A_2$, and $q$, we do even extensions in $x_3$, i.e.,
\begin{align*}
\tilde A_j(x)&=\begin{cases}  A_j(x),& x_3<0,\\
 A_j(\tilde x),& x_3>0,
\end{cases}\quad j=1,2,\\
\tilde q(x)&=\begin{cases}  q(x),& x_3<0,\\
 q(\tilde x),& x_3>0.
\end{cases}
\end{align*}
For $A_3$ and $f$, we do odd extensions in $x_3$, 
\begin{align*}
\tilde A_3(x)&=\begin{cases}  A_3(x),& x_3<0,\\
- A_3(\tilde x),& x_3>0,
\end{cases}\\
\tilde f(x)&=\begin{cases}  f(x),& x_3<0,\\
 -f(\tilde x),& x_3>0.
\end{cases}
\end{align*}
Since $A_3=0$ when $x_3=0$, we see that $\tilde A\in \Wc^{1,\infty}(\R^3,\R)$. 
Furthermore, $\tilde q\in \Lc^\infty(\R^3)$ and $\tilde f\in\Lc^2(\R^3)$.

By Theorem \ref{uniq1} and Theorem \ref{LPmethod}, the problem 
\[
(L_{\tilde A,\tilde q}-k^2)\tilde u=\tilde f\quad\textrm{in}\quad \R^3
\]
has a unique outgoing solution $\tilde u\in H^2_{loc}(\R^3)$. 

Next we want to show that $\tilde u$ is odd in $x_3$. To that end it is 
convenient to write, 
\begin{equation}
\label{eq_LAQ_1}
L_{\tilde A,\tilde q}=-\Delta -2i\tilde A\cdot\nabla + \tilde p, \quad \tilde 
p=-i\nabla\cdot \tilde A +\tilde A^2+\tilde q. 
\end{equation}
Here one sees easily that the operators $\Delta$, $\tilde A_3\partial_3$ and
$\tilde p$ all preserve the parity in $x_3$ of a function that they operate on. 
Hence, the operator 
$L_{\tilde A,\tilde q} -k^2$
preserves the parity in $x_3$.

Decompose $\tilde u$ into an even and odd
part with respect to $x_3$, i.e. 
\[
\tilde u=\tilde u_e+\tilde u_o,
\]
where
\begin{align*}
\tilde u_e(x)=\frac{1}{2}  \big(\tilde u(x)+\tilde u(\tilde x) \big), \quad
\tilde u_o(x)=\frac{1}{2}  \big(\tilde u(x)-\tilde u(\tilde x) \big).
\end{align*}
Then 
\[
\tilde f=(L_{\tilde A,\tilde q} -k^2)\tilde u_e+(L_{\tilde A,\tilde q} 
-k^2)\tilde u_0,
\]
and using that $\tilde f$ is odd with respect to $x_3$, we conclude that 
\[
(L_{\tilde A,\tilde q} -k^2)\tilde u_e=0\quad \textrm{in}\quad \R^3. 
\]
Now a direct computation shows that the function $x\mapsto \tilde 
u(\tilde x)$ is outgoing on $\R^3$, since $\tilde u$ has this property. 
Thus, $\tilde u_e$ is outgoing, and by Theorem \ref{uniq1}, $\tilde u_e=0$. 
Hence, $\tilde u$ is odd in $x_3$. 

The Sobolev embedding theorem shows that $\tilde u$ is continuous in
$\R^3$, since $\tilde u \in \Hl^2(\R^3)$. Hence,  $\tilde u|_{\partial \HS}=0$, 
so that $\tilde u|_{\HS}$ is a solution to the half space Dirichlet problem 
\eqref{eq_sec2_1}.

In order to prove uniqueness, we assume that $u\in H^2_{loc}(\CHS)$ 
is an outgoing solution to  the problem  \eqref{eq_sec2_1} with $f=0$. 
We need to show that $u \equiv 0$.
To that end, let us consider the odd extension of $u$ with respect to $x_3$, 
i.e.
\begin{equation}
\label{eq_tilde-u}
  \tilde u(x) = \begin{cases}
    u(x), & \quad x_3 < 0, \\
    -u(\tilde x), & \quad x_3 > 0.\\
  \end{cases}
\end{equation}
Notice that since  $u=0$ along $x_3=0$, the function $\tilde u$ is continuous 
across $x_3=0$.    

Let us now show that $\tilde u$ satisfies the equation,
\begin{equation}
\label{eq_LAQ_5}
(L_{\tilde A,\tilde q}-k^2)\tilde u=0\quad\textrm{in}\quad \R^3,
\end{equation}
with $\tilde A$ and $\tilde q$ as in the first part of the proof.  Indeed, 
computing the first order partial derivatives of $\tilde u$, given by 
\eqref{eq_tilde-u}, in the sense of distributions on  $\R^3$,  we obtain that   
 
\begin{equation}
\label{eq_LAQ_2}
\begin{aligned}
\p_j \tilde u(x)&=\begin{cases}
    (\p_j u)(x), & \quad x_3 < 0, \\
    -(\p_ju)(\tilde x), & \quad x_3 > 0,\\
  \end{cases}\quad j=1,2, \\
  \p_3\tilde u(x)&=\begin{cases}
    (\p_3u)(x), & \quad x_3 < 0, \\
    (\p_3u)(\tilde x), & \quad x_3 > 0.\\
  \end{cases}
\end{aligned}
\end{equation}
Hence, we see that $\tilde u\in H^1_{\textrm{loc}}(\R^3)$.  

One has to be more careful when computing the second order partial derivatives 
of  $\tilde u$.  For this reason, we shall give the details of the computation 
below. Let $\varphi\in C^\infty_0(\R^3)$. Then denoting by 
$\langle\cdot,\cdot\rangle$ the duality between distributions and test 
functions,  $x'=(x_1,x_2)$, and $\p_{x'}^2=\p_1^2+\p_2^2$,  we have  
\begin{align*}
\langle-\Delta \tilde u,\varphi\rangle=-\int_{\R^3}\tilde u\Delta \varphi 
dx=-\int_{-\infty}^0\int_{\R^2} u(x',x_3)(\p_{x'}^2+\p_{3}^2)\varphi dx'dx_3\\
+\int_{0}^{+\infty}\int_{\R^2} u(x',-x_3)(\p_{x'}^2+\p_{3}^2)\varphi dx'dx_3\\
=-\int_{-\infty}^0\int_{\R^2} (\p_{x'}^2u)(x',x_3)\varphi dx'dx_3+ 
\int_{0}^{+\infty}\int_{\R^2} (\p_{x'}^2u)(x',-x_3)\varphi dx'dx_3\\
+ \int_{\R^2} I(x')dx',
\end{align*}
where
\begin{align*}
I(x')=&-\int_{-\infty}^0u(x',x_3)\p_3^2\varphi dx_3+\int_0^{+\infty} 
u(x',-x_3)\p_3^2\varphi dx_3\\
=&
\int_{-\infty}^0 \p_{3} u(x',x_3)\p_{3}\varphi dx_3- 
u(x',x_3)\p_{3}\varphi|_{-\infty}^0\\
&+ \int_{0}^{+\infty} (\p_{3}u)(x',-x_3)\p_{3}\varphi dx_3+ 
u(x',-x_3)\p_{3}\varphi|_{0}^{+\infty}\\
=&-\int_{-\infty}^0 \p_{3}^2 u(x',x_3)\varphi dx_3+ \p_{3} u(x',x_3)\varphi 
|_{-\infty}^0\\
+ &\int_{0}^{+\infty} (\p_{3}^2u)(x',-x_3)\varphi dx_3+ 
(\p_{3}u)(x',-x_3)\varphi |_{0}^{+\infty}\\
=&-\int_{-\infty}^0 \p_{3}^2 u(x',x_3)\varphi dx_3+ \int_{0}^{+\infty} 
(\p_{3}^2u)(x',-x_3)\varphi dx_3.
\end{align*}
Hence, we have 
\begin{equation}
\label{eq_LAQ_3}
-(\Delta \tilde u)(x) = \begin{cases}
    -(\Delta u)(x), & \quad x_3 < 0, \\
    (\Delta u)(\tilde x), & \quad x_3 > 0,\\
  \end{cases}
\end{equation}
in the sense of distributions. 
Using \eqref{eq_LAQ_1}, \eqref{eq_LAQ_2} and \eqref{eq_LAQ_3}, we obtain that 
\begin{align*}
(L_{\tilde A,\tilde q}-k^2)\tilde u= \begin{cases}
    -\Delta u(x) -2iA(x)\cdot\nabla u(x)+(\tilde p(x)-k^2)u(x), & \quad x_3 < 
0, \\
    (\Delta u)(\tilde x) + 2iA(\tilde x)\cdot\nabla u(\tilde x)+ (-\tilde 
p(\tilde x)+k^2) u(\tilde x), & \quad x_3 > 0.
  \end{cases}
\end{align*}
We have therefore verified that $\tilde u$ solves \eqref{eq_LAQ_5}. 

Taking into account the fact that $\tilde u$ is outgoing and applying Theorem 
\ref{uniq1}, we finally get $u\equiv 0$ in $\HS$. 

\end{proof}

It is now straightforward to establish the main result of this section, Theorem 
\ref{thm_1_direct}. 

\textbf{Proof of Theorem \ref{thm_1_direct}.} 
Uniqueness follows from Theorem \ref{HS1}, since it implies that $v_1-v_2\equiv0$ for
any two solutions $v_1$ and $v_2$ of problem \eqref{eq:BVP} and \eqref{eq:SRC}.

It remains to check the existence of a solution. To that end, given $f\in 
H^{3/2}_{comp}(\p \HS)$, let $F\in H^2_{comp}(\R^3)$ be such that 
$F|_{\partial \HS} = f$. 
Theorem  \ref{HS1} gives the existence of an outgoing solution $v$ to
\begin{equation*}
 \left.\begin{aligned}
        (L_{A,q}-k^2 )v & = -(L_{A,q}-k^2 ) F,\\
        v |_{\partial \HS} &= 0.
       \end{aligned}
 \right.
\end{equation*}
Then $u=v+F$ solves the problem \eqref{eq:BVP} and satisfies \eqref{eq:SRC}.  
The proof of Theorem \ref{thm_1_direct} is complete. 
\begin{flushright}$\Box$
\end{flushright}

\newpage
\section{The inverse problem: Proof of Theorem \ref{thm_2_inverse}}
\label{sec:invprob}

The main task of this section is to prove Theorem \ref{thm_2_inverse}. 
It will be convenient to set
\[
\Bm :=\HS\cap B,\quad l:=\p \HS \cap B, 
\]
where $B\subset\R^3$ is the open ball of Theorem \ref{thm_2_inverse}
containing the supports of $A_j$ and $q_j$, $j=1,2$.
Recall that we assume that 
\[
(\p\HS  \setminus \ov{B}) \cap \Gamma_j \neq \emptyset, \quad j=1,2.
\]
We can thus choose $\tilde \Gamma_j$, such that 
\begin{align*}
\tilde \Gamma_j\subset\Gamma_j,\quad \tilde \Gamma_j \subset \subset 
\p\HS \setminus \ov{B} ,\quad j=1,2. 
\end{align*}
Then it follows from \eqref{eq_data_inv} that 
\begin{equation}
\label{eq_data_inv_1}
\Lambda_{A_1,q_1}(f)|_{\tilde \Gamma_1}=\Lambda_{A_2,q_2}(f)|_{\tilde \Gamma_1},
\end{equation}
for any $f\in H^{3/2}(\p \HS)$, $\supp(f)\subset \tilde \Gamma_2$.  In order 
to prove Theorem \ref{thm_2_inverse} we shall only  use the data 
\eqref{eq_data_inv_1}, which turns out to be enough to determine the magnetic 
field and the electric potential.

The gauge invariance of the DN-map plays an important role in 
the sequel. We state therefore the following results.

\begin{lem} \label{gauge_inv} 
Let $A \in W^{1,\infty}(\CHS,\R^3)$ and $q \in L^{\infty}(\CHS)$. Then
\begin{enumerate}[(i)]
\item For all $\psi \in C^{1,1}(\CHS,\R)$ we  have 
\[
e^{-i \psi}\Lambda_{A,q} e^{i\psi} =\Lambda_{A+\nabla\psi,q}.
\]
\item  There exists $\psi \in C^{1,1}(\CHS,\R)$ with $\psi|_{\{x_3=0\}} = 0$,
for which
\[
\Lambda_{A,q} = \Lambda_{A+\nabla\psi,q}
\]
and $(A+\nabla\psi)|_{\{x_3=0\}} = (A_1,A_2,0)$.
\end{enumerate}
\end{lem}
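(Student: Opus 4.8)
The plan is to prove part (i) by the standard conjugation identity and then derive part (ii) as an application of Lemma \ref{BorelL}.

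For part (i), the key computational fact is the operator identity $e^{-i\psi}L_{A,q}e^{i\psi} = L_{A+\nabla\psi,q}$, which one checks by expanding: for $u\in H^2_{loc}$ one has $(-i\partial_j + A_j)(e^{i\psi}u) = e^{i\psi}(-i\partial_j + A_j + \partial_j\psi)u$, since $-i\partial_j(e^{i\psi}u) = e^{i\psi}(\partial_j\psi)u + e^{i\psi}(-i\partial_j u)$; iterating gives the conjugation of the full second-order operator, and the potential $q$ commutes with $e^{i\psi}$. From this, if $u$ solves the Dirichlet problem \eqref{eq:BVP}, \eqref{eq:SRC} for $L_{A,q}$ with boundary data $f$, then $\tilde u := e^{-i\psi}u$ solves $(L_{A+\nabla\psi,q}-k^2)\tilde u = 0$ with boundary data $e^{-i\psi}|_{\p\HS}f$; one also checks that multiplication by the bounded function $e^{-i\psi}$ preserves the Sommerfeld condition \eqref{eq:SRC} (here compact support of $\psi$ makes $e^{-i\psi}\equiv 1$ near infinity, so this is immediate). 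By uniqueness in Theorem \ref{thm_1_direct}, $\tilde u$ is \emph{the} outgoing solution for $L_{A+\nabla\psi,q}$. Then computing the Neumann data:
\[
(\partial_n + i(A+\nabla\psi)\cdot n)\tilde u\big|_{\p\HS} = e^{-i\psi}(\partial_n + iA\cdot n)u\big|_{\p\HS} + \big(\text{terms from }\partial_n e^{-i\psi}\text{ and }i(\partial_n\psi)e^{-i\psi}u\big),
\]
and the extra terms cancel: $\partial_n(e^{-i\psi}u) = e^{-i\psi}\partial_n u - i(\partial_n\psi)e^{-i\psi}u$, which exactly kills the $i(\partial_n\psi)$ contribution. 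Hence $\Lambda_{A+\nabla\psi,q}(e^{-i\psi}|_{\p\HS}f) = e^{-i\psi}|_{\p\HS}\Lambda_{A,q}f$, i.e. $\Lambda_{A+\nabla\psi,q}\circ e^{-i\psi}|_{\p\HS} = e^{-i\psi}|_{\p\HS}\circ\Lambda_{A,q}$, which rearranges to the claimed identity $e^{-i\psi}\Lambda_{A,q}e^{i\psi} = \Lambda_{A+\nabla\psi,q}$.

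For part (ii), apply Lemma \ref{BorelL} with $v := -A_3|_{\{x_3=0\}}$, which lies in $C^{0,1}(\R^2)$ with compact support since $A\in\Wc^{1,\infty}$; this yields $\psi\in C^{1,1}(\R^3)$ with compact support such that $\psi|_{\{x_3=0\}}=0$ and $\partial_3\psi|_{\{x_3=0\}} = -A_3|_{\{x_3=0\}}$. Since $\psi$ vanishes on $\{x_3=0\}$, its tangential derivatives $\partial_1\psi, \partial_2\psi$ vanish there as well, so $\nabla\psi|_{\{x_3=0\}} = (0,0,-A_3)|_{\{x_3=0\}}$ and thus $(A+\nabla\psi)|_{\{x_3=0\}} = (A_1, A_2, 0)$. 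Because $\psi|_{\{x_3=0\}}=0$ we have $e^{\pm i\psi}|_{\p\HS} = 1$, so part (i) gives $\Lambda_{A,q} = e^{-i\psi}\Lambda_{A,q}e^{i\psi} = \Lambda_{A+\nabla\psi,q}$, as desired.

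The main obstacle is the careful bookkeeping in part (i): verifying that the boundary conjugation factor $e^{-i\psi}|_{\p\HS}$ threads consistently through the Dirichlet data, the Sommerfeld condition, and the Neumann data, and that the cross terms in $\partial_n(e^{-i\psi}u)$ cancel against the $i(\nabla\psi\cdot n)$ term in the definition of the DN-map. Everything else is a routine application of Theorem \ref{thm_1_direct} and Lemma \ref{BorelL}.
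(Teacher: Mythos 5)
Your proposal is correct and takes essentially the same route as the paper: for (i) it uses the operator conjugation identity $e^{-i\psi}L_{A,q}e^{i\psi}=L_{A+\nabla\psi,q}$, the corresponding correspondence between outgoing solutions, and a direct computation of the Neumann data in which the $i(\partial_n\psi)$ terms cancel; for (ii) it applies Lemma \ref{BorelL} with $v=-A_3|_{\{x_3=0\}}$ and then specializes (i). The only additions beyond the paper's proof are explicit verifications the paper leaves implicit (that $e^{-i\psi}$ preserves the radiation condition because $\psi$ is compactly supported, that uniqueness from Theorem \ref{thm_1_direct} is what identifies $\tilde u$, and that tangential derivatives of $\psi$ vanish on $\{x_3=0\}$), all of which are correct.
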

\begin{proof} \textit{(i).} A straight forward computation shows that  
\begin{align}
e^{-i \psi}L_{A,q} e^{i\psi} = L_{A+\nabla\psi,q}. \label{eq_Linv}
\end{align}
So that
a function $u$ solves $L_{A+\nabla\psi,q} u = 0$ if and only if $v=e^{i\psi}u$
solves $L_{A,q}v = 0$. Moreover we have
\begin{align*}
(e^{-i \psi}\Lambda_{A,q} e^{i\psi}) f &= e^{-i \psi} (\p_n + in\cdot A) 
(e^{i\psi}u) \\
&=   i\p_n \psi u +  \p_n u + in\cdot A u \\
&= \Lambda_{A+\nabla\psi,q}f.
\end{align*}
\textit{(ii).}  By Lemma \ref{BorelL} there exists a $\psi \in C^{1,1}(\CHS,\R)$ with
$\psi|_{\{x_3=0\}} = 0$ and $\nabla \psi|_{\{x_3=0\}} = (0,0,-A_3)$. By part
\textit{(i)}  we have
\begin{align*}
\Lambda_{A+\nabla\psi,q} f &=  e^{-i\psi} \Lambda_{A,q}  (e^{i\psi}) f \\
&=  \Lambda_{A,q} f.
\end{align*}
\end{proof}

\begin{rem} \label{rem_gauge}
Notice that part \textit{(ii)} of the Proposition says in other words
that  we can change a potential $A$ to $\tilde{A} = (A_1,A_2,A_3-\p_3 \psi) =
(A_1,A_2,0)$, while still retaining that $\Lambda_{A,q}=\Lambda_{\tilde{A},q}$. It
follows that we can always assume that $n\cdot A|_{\{ x_3=0\}} = 0$, where $n$
is unit normal to the plane $\{x_3=0\}$, without altering the DN-map.

Notice also that this is the reason why we can take the DN-map as having the
value $(\p_n+ i n\cdot A)u |_{\{x_3=0\}}$, instead of just $\p_n u|_{\{x_3=0\}}$,
even though we do not "know" $A$ on the boundary.
\end{rem}

\subsection{An integral identity}
\label{sec:IA}

One central step in the ideas that are used in proving uniqueness results for 
Calderon's problem, is to derive an integral equation that
expresses $L^2$
orthogonality between the product of two solutions $u_1$ and $u_2$,  and  the 
difference of two
potentials $q_1$ and $q_2$, see \cite{Uhl_review_1999}. One shows that
\begin{align*}
\int (q_1-q_2)u_1 u_2=0,
\end{align*}
provided that the DN-maps for $q_1$ and $q_2$ are equal.

A similar thing will be done in this subsection, for the magnetic case. The
integral equation, is however more involved in the case of a magnetic potential 
and cannot by
itself be interpreted as an orthogonality relation. 
We will be considering the integral equation in conjunction with solutions that
depend on a small positive parameter $h$. In the later sections we will see 
that 
in the limit $h \nuoli 0$, we obtain a criterion for the curl being zero.

We now begin deriving the integral identity. We 
assume that $A_j,q_j$ and $\Gamma_j$ 
are as in Theorem \ref{thm_2_inverse} and that 
\begin{align*}
\Lambda_{A_1,q_1}(f)|_{\Gamma_1}=\Lambda_{A_2,q_2}(f)|_{\Gamma_1},
\end{align*}
for any $f\in H^{3/2}_{\emph{\textrm{comp}}}(\p \HS)$, $\supp(f)\subset 
\Gamma_2$, 
so that \eqref{eq_data_inv_1} also applies.

Let $u_1\in 
H^2_{\textrm{loc}}(\overline{\HS})$ be the radiating solution to 
\begin{align*}
(L_{A_1,q_1} -k^2) u_1 &= 0, \text{ in $\HS$},  \\
u_1|_{\partial \HS} &= f, 
\end{align*}
with $f \in H^{3/2}(\p \HS)$, $\supp(f)\subset\tilde \Gamma_2$. Let $v\in 
H^2_{\textrm{loc}}(\overline{\HS})$ be the radiating solution to 
\begin{align*}
(L_{A_2,q_2} -k^2) v &= 0, \text{ in $\HS$}, \\
v|_{\partial \HS} &= f.
\end{align*}
Define $w:=v-u_1$. Then
\begin{align}
(L_{A_2,q_2} -k^2) w
&= 2i(A_2-A_1)\dotc \nabla u_1 + i \nabla \dotc(A_2-A_1)u_1 \nonumber \\
& \quad +(A_1^2-A_2^2)u_1 + (q_1-q_2)u_1. \label{eq:aoI}
\end{align}
It follows from \eqref{eq_data_inv_1}that
\begin{align}
\label{eq_sec3_1}
(\partial_n + i A_1\cdot n) u_1|_{\tilde \Gamma_1} = (\partial_n + i A_2\cdot 
n)v|_{\tilde \Gamma_1}. 
\end{align}

By Remark \ref{rem_gauge} we may assume that 
$A_1\cdot n=A_2\cdot n=0$ on $\p\HS$, so that $\p_n w=0$ on 
$\tilde \Gamma_1$.  
We also conclude from \eqref{eq:aoI} that $w$ satisfies the equation
\[
(-\Delta-k^2)w=0\quad \textrm{in}\quad \HS\setminus\ov{\Bm}. 
\]
As $w|_{\tilde \Gamma_1}=\p_n w|_{\tilde \Gamma_1}=0$, by unique continuation, we get that 
$w=0$ in $\HS \setminus \ov{\Bm}$.
See Theorem \ref{UCP} and 
Corollary \ref{UCP_boundary} in the appendix.
Since $w\in H^2_{\textrm{loc}}(\overline{\HS})$, we have 
\[
w=\p_n w=0\quad\textrm{on} \quad \p \Bm\cap \HS.
\]
Let $u_2\in H^2( \Bm)$ be a solution to $(L_{A_2,\overline{q_2}}-k^2)u_2 = 0$ 
in $ \Bm$.
Then by Green's formula, we get 
\begin{align*}
((L_{A_2,q_2}-k^2) w, u_2)_{L^2( \Bm)} 
&= (w , (L_{A_2,\overline{q_2}}-k^2) u_2)_{L^2( \Bm)} \\
&\quad - ((\partial_n + iA_2\cdot n) w, u_2)_{L^2(\partial  \Bm)} \\
&\quad + (w , (\partial_n + iA_2\cdot n) u_2)_{L^2(\partial  \Bm)} \\
&= -(\partial_n w, u_2)_{L^2(l)}.
\end{align*}
Assuming that 
\[
u_2=0\quad \textrm{on}\quad l,
\]
we conclude that 
\[
((L_{A_2,q_2}-k^2) w, u_2)_{L^2( \Bm)}=0. 
\]
Using equation \eqref{eq:aoI} we may write this as follows,  
\begin{align*}
\int_{\Bm} (2i(A_2-A_1)\dotc (\nabla u_1)\ov{u_2} + i \nabla \dotc(A_2-A_1)u_1 
\ov{u_2})\,dx\\
+ \int_{\Bm}(A_1^2-A_2^2 + q_1-q_2)u_1\ov{u_2}\,dx= 0.
\end{align*}
Using the fact that $(A_2-A_1)\cdot n=0$ on $\p  \Bm$ and  an integration by 
parts, we get 
\begin{align*}
i \int_{\Bm} \nabla \dotc(A_2-A_1)u_1 \ov{u_2}\,dx
= -i \int_{\Bm} (A_2-A_1) \dotc (\nabla u_1 \overline{u_2} 
+ u_1 \nabla \overline{u_2})dx.
\end{align*}
Thus, we obtain that  
\begin{equation}
\label{eq_sec3_3}
\begin{aligned}
\int_{ \Bm}& i(A_2-A_1) \dotc (\nabla u_1 \ov{u_2} - u_1 \nabla 
\ov{u_2})\,dx\\ 
 &+
\int_{ \Bm} (A_1^2-A_2^2 + q_1-q_2)u_1\ov{u_2}\,dx = 0, 
\end{aligned}
\end{equation}
where $u_1\in W_1(\HS)$ and $u_2\in W_2^*( \Bm)$. Here
\begin{align*}
W_1(\HS):=\{u\in H^2_{\textrm{loc}}(\overline{\HS})\;\big|\;  
(L_{A_1,q_1}-k^2)u=0\textrm{ in }\HS, \\
\supp(u_1|_{\p\HS})\subset\tilde \Gamma_2, u\textrm{ radiating}\},
\end{align*}
and
\begin{align*}
W_2^*( \Bm):=\{u\in H^2( \Bm)\;\big|\; (L_{A_2,\ov{q_2}}-k^2)u=0\textrm{ in } 
 \Bm, u|_{l}=0\}.
\end{align*}

We shall next extend the integral identity \eqref{eq_sec3_3} to a richer class 
of solutions to the magnetic Schr\"odinger operators. To that end, let us 
introduce the following space of solutions,
\[
W_1( \Bm):=\{u\in H^2( \Bm)\;\big|\; (L_{A_1,q_1}-k^2)u=0\textrm{ in }  \Bm, 
u|_{l}=0\}.
\] 
The following Runge type approximation result is similar to those found in 
\cite{I1}, \cite{LU1} and \cite{KLU1}.

\begin{lem} \label{runge}

The space $V_1:=W_1(\HS)|_{ \Bm}$ is dense in $W_1( \Bm)$ in the 
$L^2( \Bm)$--topology.

\end{lem}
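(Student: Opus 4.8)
The plan is to prove this Runge-type density result by a Hahn--Banach duality argument, reducing density of $V_1$ in $W_1(\Bm)$ to a unique continuation statement. First I would observe that $W_1(\Bm)$ is a closed subspace of $L^2(\Bm)$ (being the $L^2$-closure of the solution space, or arguing via elliptic estimates), so it suffices to show: if $g \in L^2(\Bm)$ satisfies $\int_{\Bm} g\,\overline{u}\,dx = 0$ for all $u \in V_1 = W_1(\HS)|_{\Bm}$, then $\int_{\Bm} g\,\overline{u}\,dx = 0$ for all $u \in W_1(\Bm)$. To exploit the orthogonality to $V_1$, I would solve an auxiliary inhomogeneous adjoint problem in the half space: let $\phi \in H^2_{loc}(\CHS)$ be the radiating solution to $(L_{A_1,\overline{q_1}} - k^2)\phi = \chi_{\Bm} g$ in $\HS$ with $\phi|_{\p\HS} = 0$, which exists and is unique by Theorem \ref{HS1} (applied with the conjugated potential $\overline{q_1}$, noting $\Im \overline{q_1} = -\Im q_1$ --- here one must be slightly careful about the sign condition; if needed, use the \emph{incoming} solution of the original problem, whose conjugate solves the adjoint equation and satisfies the required radiation condition).

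Next I would pair this $\phi$ against an arbitrary $u \in W_1(\HS)$. Applying Green's formula on $\Bm$ (and using that $\phi$, $u$ solve the relevant equations, that $A_1 \cdot n = 0$ on $l$ after the gauge fixing of Remark \ref{rem_gauge}, that $u$ has support of its trace in $\tilde\Gamma_2 \subset\subset \p\HS \setminus \overline{B}$, hence $u|_l$ is unconstrained but $\phi|_l = 0$), the orthogonality $\int_{\Bm} g\,\overline{u} = 0$ for all $u \in V_1$ translates into a boundary condition on $\phi$ along $\p\Bm \cap \HS$, namely $(\p_n + iA_1\cdot n)\phi = \p_n \phi = 0$ on $l$ --- actually the cleaner route is: the pairing $\int_{\Bm} \overline{\phi}\,(L_{A_1,q_1} - k^2)u$ vanishes identically for $u \in W_1(\HS)$, and Green's formula rewrites $0 = \int_{\Bm} g\overline{u}$ as a boundary integral over $\p\Bm \cap \HS$ involving $\phi$ and $u$, so that varying the Cauchy data of $u$ on $\p\Bm \cap \HS$ (which is possible because solutions in $W_1(\HS)$ restrict to a large set of local solutions near $\p\Bm\cap\HS$ by runnning them from far-away boundary data --- this is itself where one invokes that $\Bm$ is separated from $\tilde\Gamma_2$) forces both $\phi$ and $\p_n\phi$ to vanish on $\p\Bm \cap \HS$. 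Then $\phi$ extends by zero to a solution of $(L_{A_1,\overline{q_1}} - k^2)\phi = 0$ in $\HS \setminus \overline{\Bm}$ with vanishing Cauchy data on $\p\Bm \cap \HS$, and unique continuation (Theorem \ref{UCP} and the boundary version Corollary \ref{UCP_boundary}) gives $\phi \equiv 0$ in $\HS \setminus \overline{\Bm}$, hence $\phi|_{\p\Bm}$ and $\p_n\phi|_{\p\Bm}$ vanish from the outside too, so $\phi$ has zero Cauchy data on all of $\p\Bm \cap \HS$.

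Finally, with $\phi$ now known to have vanishing Cauchy data on $\p\Bm \cap \HS$ and $\phi|_l = 0$, I would take any $u \in W_1(\Bm)$ and apply Green's formula on $\Bm$ once more:
\begin{align*}
\int_{\Bm} g\,\overline{u}\,dx &= \int_{\Bm} (L_{A_1,\overline{q_1}} - k^2)\phi \cdot \overline{u}\,dx \\
&= \int_{\Bm} \phi \cdot \overline{(L_{A_1,q_1}-k^2)u}\,dx + (\text{boundary terms on }\p\Bm) = 0,
\end{align*}
since $(L_{A_1,q_1}-k^2)u = 0$ in $\Bm$ and all boundary terms vanish ($\phi$, $\p_n\phi$ vanish on $\p\Bm\cap\HS$, and $u|_l = \phi|_l = 0$). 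This shows $g \perp W_1(\Bm)$, completing the duality argument. The main obstacle I anticipate is the middle step: carefully justifying that the Cauchy data of solutions $u \in W_1(\HS)$ restricted to $\p\Bm \cap \HS$ is rich enough (dense in the appropriate trace space) to conclude $\phi = \p_n\phi = 0$ there --- this requires knowing that solutions generated from boundary data supported in $\tilde\Gamma_2$ (far from $B$) already fill out, after restriction, a dense subspace of local solutions near $\p\Bm$, which in turn rests on another unique continuation / density argument of the same flavor and on the geometric separation hypothesis \eqref{eq_gamma}. A secondary technical point is the correct handling of the radiation condition and the sign of $\Im q_1$ when passing to the adjoint problem, which should be dealt with by working with incoming solutions and conjugation as noted above.
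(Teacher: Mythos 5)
Your overall strategy — a Hahn--Banach duality reduction, solving an adjoint problem with a compactly supported source in $\HS$, and concluding by unique continuation — is exactly the paper's approach, and your handling of the sign of $\Im q_1$ (passing to an incoming solution via conjugation of the $(L_{-A_1,q_1})$-problem) matches the paper precisely. However, the middle step as you have set it up has a genuine gap, one you yourself flag as ``the main obstacle.'' You propose applying Green's formula over $\Bm$ and then arguing that the Cauchy data of $u\in W_1(\HS)$ restricted to $\p\Bm\cap\HS$ is rich enough to force $\phi=\p_n\phi=0$ there. But establishing that richness is itself a Runge-type density statement of the same kind as Lemma \ref{runge} — so the argument would be circular. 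There is no way around this from inside $\Bm$: the space $V_1$ is defined as restrictions of global solutions with Dirichlet data far away, and you have no direct handle on what Cauchy traces such solutions leave on the artificial interface $\p\Bm\cap\HS$.

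The paper avoids this by applying the Green's formula \emph{over the entire half space}, not over $\Bm$. Specifically, with $u\in W_1(\HS)$ (outgoing, trace supported in $\tilde\Gamma_2$) and $U$ incoming with $(L_{A_1,\ov{q_1}}-k^2)U=g_T$ and $U|_{\p\HS}=0$, Lemma \ref{MagGFII} (whose proof absorbs the boundary terms at infinity via the radiation conditions) yields
\[
0 = (u,g_T)_{L^2(\HS)} = -(u,\p_n U)_{L^2(\tilde\Gamma_2)}.
\]
Now the boundary term sits on $\tilde\Gamma_2\subset\p\HS$, where $u|_{\p\HS}$ really \emph{is} a free Dirichlet datum (any $C^\infty_0(\tilde\Gamma_2)$ function is admissible), so one directly concludes $\p_n U|_{\tilde\Gamma_2}=0$ with no circularity. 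Since also $U|_{\tilde\Gamma_2}=0$, unique continuation (Theorem \ref{UCP} / Corollary \ref{UCP_boundary}) from $\tilde\Gamma_2$, in the region $\HS\setminus\ov{\Bm}$ where the operator reduces to $-\Delta-k^2$, gives $U\equiv 0$ there, and hence $U=\p_n U=0$ on $\p\Bm\cap\HS$ as a consequence rather than an assumption. Your final step — pairing $U$ against an arbitrary $u_0\in W_1(\Bm)$ over $\Bm$ and using the vanishing Cauchy data of $U$ on $\p\Bm\cap\HS$ together with $u_0|_l=U|_l=0$ — then goes through exactly as you wrote. So the fix is simply to swap the domain in the Green's-formula step from $\Bm$ to $\HS$ and invoke Lemma \ref{MagGFII}; everything else in your outline is sound.
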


\begin{proof}

Suppose that $V_1$ is not dense in $W_1( \Bm)$.
First notice that $\text{span}(V_1)=V_1$ 
so that $\ov{V_1}$ is a linear subspace of $L^2( \Bm)$. Since
$V_1$ is not dense in $W_1( \Bm)$, we have a vector  $u_0 \in W_1( \Bm)$ 
such that $u_0 \notin \ov{V_1}$.
We can decompose  $u_0$ as $u_0=a+b$, where $a \in
\ov{V_1}$, $b \in \ov{V_1}^\bot$ and $b \neq 0$.
Let $T$ be the linear functional on $L^2( \Bm)$, defined by 
$T(x):= \text{proj}_{\ov{V_1}^\bot}(x)/ \|b\|_{L^2}$, 
where $\text{proj}_{\ov{V_1}^\bot}$ is the orthogonal
projection to $\ov{V_1}^\bot$. Now clearly
$\|T(u_0)\|_{L^2} = 1$ and $T|_{V_1} = 0$. 

By the Riesz representation theorem, there is  $g_T \in L^2( \Bm)$ that
corresponds to $T$. Extend $g_T$ by zero to the complement of $ \Bm$ 
in $\HS$. 
Let $U\in H^2_{\textrm{loc}}(\overline{\HS})$ be the incoming solution to 
\begin{align*}
(L_{A_1,\ov{q_1}} -k^2) U &= g_T \quad \text{in}\quad \HS,\\
U|_{\partial \HS} &= 0. 
\end{align*}
The existence of such a solution follows from Theorem \ref{thm_1_direct}, since 
we can find a $\tilde U$ outgoing with  $(L_{-A_1,q_1}-k^2)\tilde U=\ov{g_T}$ 
and $\tilde U|_{\partial \HS} = 0$.  It then suffices to take 
$U=\overline{\tilde U}$.

Now let $u \in W_1(\HS)$. Then because $T|_{V_1}=0$ and $\supp(g_T) \subset 
 \Bm$, we get 
by the Green's formula of Lemma \ref{MagGFII} that
\begin{align*}
0=(u,g_T)_{L^2(\HS)}
&= (u,(L_{A_1,\ov{q_1}}-k^2) U)_{L^2(\HS)} \\
&= ((L_{A_1,q_1}-k^2) u,U)_{L^2(\HS)} \\
&\quad - (u , (\partial_n +iA_1\cdot n) U)_{L^2(\partial \HS)} \\
&\quad + ( (\partial_n + iA_1\cdot n) u, U)_{L^2(\partial \HS)} \\
&= -( u,\partial_n U)_{L^2(\tilde \Gamma_2)}.
\end{align*}
Since the boundary condition $u|_{\tilde \Gamma_2}$ can be chosen  arbitrarily 
from
$C^\infty_0(\tilde \Gamma_2)$,
we get  that $\partial_n U|_{\tilde \Gamma_2} = 0$. Since $ U|_{\tilde 
\Gamma_2} = 0$, we 
apply the unique continuation
principle 
to conclude that  $U|_{\HS \setminus \ov{ \Bm}} = 0$. As $U \in 
H^2_{\textrm{loc}}(\overline{\HS})$, we have 
\[
U|_{\p \Bm \cap \HS}= \p_n U|_{\p \Bm\cap \HS}= 0.
\]

Now applying Green's formula and 
doing the same calculation as above for $u_0$ and $ \Bm$ instead of $u$ yields
\begin{align*}
(u_0,g_T)_{L^2( \Bm)}
&= (u_0,(L_{A_1,\ov{q_1}}-k^2) U)_{L^2( \Bm)} \\
&= ((L_{A_1,q_1}-k^2) u_0,U)_{L^2( \Bm)} \\
&\quad - (u_0 , (\partial_n +iA_1\cdot n) U)_{L^2(\partial  \Bm)} \\
&\quad + ( (\partial_n + iA_1\cdot n) u_0, U)_{L^2(\partial  \Bm)} \\
&=-(u_0 , \partial_n U)_{L^2( l)}=0.
\end{align*}
Here we have used that $u_0|_{l}=0$. 
It follows that $T(u_0) = 0$. This contradiction completes the proof.
\end{proof}

Since $(A_2-A_1)\cdot n=0$ on $\p \Bm$,  we can rewrite \eqref{eq_sec3_3} in 
the following form,
\begin{align*}
-\int_{ \Bm}& u_1 i \nabla \cdot((A_2-A_1) \ov{u_2})\,dx  -\int_{ \Bm} i   
(A_2-A_1)\cdot (u_1 \nabla \ov{u_2})\,dx\\ 
 &+
\int_{ \Bm} (A_1^2-A_2^2 + q_1-q_2)u_1\ov{u_2}\,dx = 0.
\end{align*}
Hence, an application of Lemma \ref{runge} implies that the integral identity  
\eqref{eq_sec3_3} is valid for any $u_1\in W_1( \Bm)$ and any $u_2 \in 
W_2^*( \Bm)$.

We summarize the discussion in this subsection in the following result.

\begin{prop} \label{identity} 
Assume that $A_j,q_j$ and $\Gamma_j$, $j=1,2$ 
are as in Theorem \ref{thm_2_inverse} and that the DN-maps satisfy
\begin{align}
\label{eq_ieDN}
\Lambda_{A_1,q_1}(f)|_{\Gamma_1}=\Lambda_{A_2,q_2}(f)|_{\Gamma_1},
\end{align}
for any $f\in H^{3/2}_{\emph{\textrm{comp}}}(\p \HS)$, $\supp(f)\subset \Gamma_2$.
Then 
\begin{equation}
\begin{aligned}\label{eq_sec3_4} 
\int_{ \Bm}& i(A_2-A_1) \dotc (\nabla u_1 \ov{u_2} - u_1 \nabla 
\ov{u_2})\,dx\\ 
 &+
\int_{ \Bm} (A_1^2-A_2^2 + q_1-q_2)u_1\ov{u_2}\,dx = 0, 
\end{aligned}
\end{equation}
for  $u_1\in W_1( \Bm)$ and any $u_2 \in W_2^*( \Bm)$.
\end{prop}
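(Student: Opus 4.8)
The statement collects the output of the construction carried out above, so the plan is to assemble those steps and finish with the Runge-type density lemma. First, by Remark~\ref{rem_gauge} I would gauge-transform both potentials so that $A_1\cdot n=A_2\cdot n=0$ on $\p\HS$; this changes neither the DN-maps nor $\nabla\times A_j$, hence the conclusion \eqref{eq_sec3_4} is unaffected. Given $f\in H^{3/2}_{\mathrm{comp}}(\p\HS)$ with $\supp f\subset\tilde\Gamma_2$, let $u_1\in W_1(\HS)$ be the radiating solution of $(L_{A_1,q_1}-k^2)u_1=0$, $u_1|_{\p\HS}=f$, and let $v$ be the radiating solution of the same Dirichlet problem for $(L_{A_2,q_2}-k^2)$. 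Setting $w=v-u_1$, a direct computation gives the inhomogeneous equation \eqref{eq:aoI}, whose right-hand side is supported in $\Bm$; in particular $(-\Delta-k^2)w=0$ in $\HS\setminus\ov{\Bm}$.

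Next I would use the data: \eqref{eq_data_inv_1} forces $(\p_n+iA_1\cdot n)u_1=(\p_n+iA_2\cdot n)v$ on $\tilde\Gamma_1$, which with the gauge normalisation gives $\p_n w=0$ on $\tilde\Gamma_1$, while also $w|_{\tilde\Gamma_1}=0$ since $u_1$ and $v$ share the boundary value $f$. As $\tilde\Gamma_1\subset\subset\p\HS\setminus\ov B$ and $w$ solves the homogeneous Helmholtz equation there, the unique continuation principle (Theorem~\ref{UCP} and Corollary~\ref{UCP_boundary}) yields $w\equiv0$ in $\HS\setminus\ov{\Bm}$, and then $w=\p_n w=0$ on $\p\Bm\cap\HS$ by $H^2_{\mathrm{loc}}$ regularity.

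Now fix any $u_2\in W_2^*(\Bm)$. Applying the magnetic Green's formula (Lemma~\ref{MagGFI}) on $\Bm$ and using that $w$ and $\p_n w$ vanish on $\p\Bm\cap\HS$, that $u_2|_l=0$, and that $A_2\cdot n=0$ on $\p\Bm$, all boundary contributions drop out and $((L_{A_2,q_2}-k^2)w,u_2)_{L^2(\Bm)}=0$. Substituting \eqref{eq:aoI} and integrating by parts the term $i\nabla\cdot(A_2-A_1)u_1\ov{u_2}$, which is legitimate because $(A_2-A_1)\cdot n=0$ on $\p\Bm$, produces exactly \eqref{eq_sec3_4} for every $u_1\in W_1(\HS)|_{\Bm}$ and every $u_2\in W_2^*(\Bm)$.

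It remains to pass from $W_1(\HS)|_{\Bm}$ to all of $W_1(\Bm)$, and here lies the only delicate point. By Lemma~\ref{runge}, $W_1(\HS)|_{\Bm}$ is dense in $W_1(\Bm)$ in the $L^2(\Bm)$-topology; but \eqref{eq_sec3_4} as written contains $\nabla u_1$, which is not continuous under $L^2(\Bm)$-convergence. I would remedy this by one more integration by parts, rewriting the identity in the gradient-free form
\[
-\int_{\Bm} u_1\, i\nabla\cdot((A_2-A_1)\ov{u_2})\,dx-\int_{\Bm} i(A_2-A_1)\cdot(u_1\nabla\ov{u_2})\,dx+\int_{\Bm}(A_1^2-A_2^2+q_1-q_2)u_1\ov{u_2}\,dx=0,
\]
whose left-hand side is a bounded linear functional of $u_1\in L^2(\Bm)$ once $u_2\in H^2(\Bm)$ and $A_j\in W^{1,\infty}$ are fixed. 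This form therefore extends by density to every $u_1\in W_1(\Bm)$, and undoing the last integration by parts recovers \eqref{eq_sec3_4}. Everything else is routine manipulation with Green's formula and unique continuation.
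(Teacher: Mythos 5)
Your proof is correct and follows essentially the same route as the paper: gauge-normalize so $A_j\cdot n=0$, form $w=v-u_1$, use the partial data and unique continuation to get $w=\partial_n w=0$ on $\p\Bm\cap\HS$, apply the magnetic Green formula, integrate by parts once, and then extend from $W_1(\HS)|_{\Bm}$ to $W_1(\Bm)$ via Lemma~\ref{runge} after rewriting the identity in the gradient-free form that is $L^2$-continuous in $u_1$. The only omission is that you do not explicitly note $w|_l=0$ when listing the facts that kill the boundary terms in Green's formula, but this follows immediately from $u_1|_{\p\HS}=v|_{\p\HS}=f$, which you had already used to get $w|_{\tilde\Gamma_1}=0$, so it is a minor gap in exposition rather than in substance.
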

\begin{flushright}$\Box$
\end{flushright}

\begin{rem} \label{rem_ie}
Notice that the proof of Proposition \ref{identity} only uses the assumption
\eqref{eq_data_inv_1}, which follows from \eqref{eq_ieDN}. 
Proposition \ref{identity} holds therefore also under the weaker assumption.
\end{rem} 

The next step in proving Theorem \ref{thm_2_inverse} is to use the integral 
identity \eqref{eq_sec3_4} with $u_j$, $j=1,2$, taken to be special solutions, 
which are called complex geometric optics solutions.

\subsection{Complex geometric optics solutions}
Let $\Omega\subset \R^3$  be a bounded domain with $C^\infty$-boundary, and let 
$A\in W^{1,\infty}(\Omega,\R^3)$,  $q\in L^\infty(\Omega,\C)$.
The task of this subsection is to review the construction of  complex geometric 
optics solutions for  the magnetic Schr\"odinger equation,
\begin{equation}
\label{eq_Sch_lip}
L_{A,q}u=0\quad \textrm{in}\quad \Omega. 
\end{equation}
A complex geometric optics solution to \eqref{eq_Sch_lip} is a solution of the form
\begin{equation}
\label{eq_cgo_lip}
u(x,\zeta;h)=e^{x\cdot\zeta/h}(a(x,\zeta;h)+r(x,\zeta;h)),
\end{equation}
where $\zeta\in \C^3$, $\zeta\cdot\zeta=0$, $a$ is a smooth 
amplitude,  $r$ is a remainder, and $h>0$ is a small parameter.

In the case when $A\in C^2(\overline{\Omega})$ and $q\in L^\infty(\Omega)$, 
such solutions were constructed in \cite{DosSantos1} using the method of 
Carleman estimates, and the construction was extended to the case of less 
regular potentials in \cite{KnuSalo}, see also \cite{KLU1}.

Let $\varphi(x) =
\alpha \cdot x$, $\alpha \in \R^3$, $|\alpha|=1$. The fundamental role in the 
construction  of complex geometric optics solutions is played by the following 
Carleman estimate, 
\begin{equation}
\label{eq_Carleman_est}
h\|u \|_{H^1_{\textrm{scl}}(\Omega)}\le C\|e^{\varphi/h}h^2L_{A,q} 
e^{-\varphi/h} u\|_{L^2(\Omega)},
\end{equation}
valid for all $u\in C^\infty_0(\Omega)$ and $0<h\le h_0$, which was proved in 
\cite{DosSantos1} and \cite{KnuSalo}.  Here 
$\|u\|_{H^1_{\textrm{scl}}(\Omega)}=\|u\|_{L^2(\Omega)}+\|h\nabla 
u\|_{L^2(\Omega)}$. 
 For the convenience of the reader, we shall present a derivation of 
\eqref{eq_Carleman_est} in the appendix. 

Based on the estimate \eqref{eq_Carleman_est}, the following solvability result 
was established in \cite[Proposition 4.3]{KnuSalo}. See also the discussion in 
the appendix. 
\begin{prop}
\label{prop_solvability}
Let $A \in W^{1,\infty}(\Omega,\R^3)$, $q \in
L^{\infty}(\Omega,\C)$, $\alpha \in \R^3$, $|\alpha|=1$ and $\varphi(x) =
\alpha \cdot x$. Then there is  $C > 0$ and  $h_0 > 0 $ such that for all $h\in 
(0,h_0]$, and any $f\in L^2(\Omega)$, the equation
\[
e^{\varphi/h}h^2L_{A,q} e^{-\varphi/h} u=f\quad\textrm{in}\quad \Omega,
\]
has a solution $u\in H^1(\Omega)$ with 
\begin{align*}
\|u \|_{H^1_{\emph{\textrm{scl}}}(\Omega)}\le \frac{C}{h}\|f\|_{L^2(\Omega)}. 
\end{align*}
\end{prop}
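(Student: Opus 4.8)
The plan is to derive Proposition~\ref{prop_solvability} from the Carleman estimate \eqref{eq_Carleman_est} by a standard duality/Hahn--Banach argument. First I would introduce the conjugated operator $L_\varphi := e^{\varphi/h}h^2 L_{A,q} e^{-\varphi/h}$ and observe that, since $\varphi$ is real-valued, the formal adjoint of $L_\varphi$ (with respect to the $L^2(\Omega)$ pairing) is the operator $L_{-\varphi}^{\,*} := e^{-\varphi/h} h^2 \overline{L_{A,q}}^{\,t} e^{\varphi/h}$, which is again a conjugated magnetic Schr\"odinger operator of the same type, now with the weight $-\varphi$ (and with $A$, $q$ replaced by $-A$, $\overline{q}$, say). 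The key point is that $-\varphi(x) = (-\alpha)\cdot x$ has $|-\alpha| = 1$, so the Carleman estimate \eqref{eq_Carleman_est} applies verbatim to $L_{-\varphi}^{\,*}$: there is $C>0$ and $h_0>0$ with
\[
h\|v\|_{H^1_{\textrm{scl}}(\Omega)} \le C \|L_{-\varphi}^{\,*} v\|_{L^2(\Omega)}, \quad v \in C^\infty_0(\Omega),\ 0 < h \le h_0.
\]

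Next I would set up the functional-analytic argument. On the subspace $\mathcal{D} := L_{-\varphi}^{\,*}\big(C^\infty_0(\Omega)\big) \subset L^2(\Omega)$ define the linear functional
\[
\ell\big(L_{-\varphi}^{\,*} v\big) := (v, f)_{L^2(\Omega)}.
\]
The Carleman estimate shows this is well-defined (if $L_{-\varphi}^{\,*}v = 0$ then $v = 0$) and bounded: by Cauchy--Schwarz and the estimate,
\[
|(v,f)_{L^2}| \le \|v\|_{L^2}\|f\|_{L^2} \le \frac{C}{h}\|L_{-\varphi}^{\,*} v\|_{L^2}\,\|f\|_{L^2},
\]
so $\ell$ has norm at most $(C/h)\|f\|_{L^2}$ on $\mathcal{D}$. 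By Hahn--Banach I extend $\ell$ to a bounded functional on all of $L^2(\Omega)$ with the same norm, and by Riesz representation I obtain $u \in L^2(\Omega)$ with $\|u\|_{L^2} \le (C/h)\|f\|_{L^2}$ and $(v,f)_{L^2} = (L_{-\varphi}^{\,*} v, u)_{L^2}$ for all $v \in C^\infty_0(\Omega)$. Reading this identity in the sense of distributions gives $L_\varphi u = f$ in $\Omega$.

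It remains to upgrade the regularity to $u \in H^1(\Omega)$ with the scaled norm bound $\|u\|_{H^1_{\textrm{scl}}} \le (C/h)\|f\|_{L^2}$; this is where I expect the main (though routine) work to be. Writing out $L_\varphi u = f$ explicitly, the equation has the form $-h^2\Delta u + (\text{first order in } h\nabla) u + (\text{zeroth order}) u = f$, i.e. $h^2\Delta u$ equals $f$ plus terms that are $O(\|h\nabla u\|_{L^2}) + O(\|u\|_{L^2})$ in $L^2$; combined with the $L^2$ bound on $u$ already obtained, semiclassical elliptic regularity (or a cutoff/difference-quotient argument, using that $A \in W^{1,\infty}$ so the first-order coefficients are bounded) yields $u \in H^1_{\textrm{loc}}$ and then, after an interior-to-boundary argument, the claimed $H^1_{\textrm{scl}}(\Omega)$ estimate. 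Since this solvability statement is quoted from \cite[Proposition 4.3]{KnuSalo}, I would, at this point, simply cite that reference for the details of the regularity step, having reproduced the duality argument that is its heart. The only genuinely delicate point is making sure the adjoint of $L_\varphi$ is again of the form to which \eqref{eq_Carleman_est} applies --- which works precisely because the estimate is stated for every linear weight $\alpha\cdot x$ with $|\alpha|=1$, and $-\alpha$ is such a weight.
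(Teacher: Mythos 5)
Your overall strategy (duality from the Carleman estimate via Hahn--Banach and Riesz) is the right one and matches the paper, but you use the wrong version of the Carleman estimate, and this creates a genuine gap in the regularity step that you flag as ``routine.''

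The paper does not work with the $L^2$-to-$L^2$ Carleman estimate \eqref{eq_Carleman_est} in the duality argument. Instead it first derives a \emph{shifted} Carleman estimate (Corollary \ref{carlII}),
\[
h\|u\|_{L^2(\Omega)} \le C \|L_\varphi u\|_{H^{-1}_{\textrm{scl}}(\R^n)}, \qquad u \in C^\infty_0(\Omega),
\]
which also holds for $L_\varphi^*$, and then runs the Hahn--Banach argument over the \emph{Hilbert space} $H^{-1}_{\textrm{scl}}(\R^n)$: the functional $T(w):=((L_\varphi^*)^{-1}w,f)_{L^2}$ is bounded on $L_\varphi^*C^\infty_0(\Omega)\subset H^{-1}_{\textrm{scl}}(\R^n)$ with norm $\le (C/h)\|f\|_{L^2}$, and Riesz representation in $H^{-1}_{\textrm{scl}}$ hands you an $r\in H^{-1}_{\textrm{scl}}(\R^n)$, hence $u=\mathcal{F}^{-1}(1+h^2|\xi|^2)^{-1}\mathcal{F}r\in H^1_{\textrm{scl}}(\R^n)$ with exactly the stated norm. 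The $H^1$ regularity and the scaled bound come out of the construction for free.

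Your version, by contrast, does the duality entirely in $L^2(\Omega)$, so you only obtain $u\in L^2(\Omega)$ with $\|u\|_{L^2}\le (C/h)\|f\|_{L^2}$. The follow-up step --- ``semiclassical elliptic regularity (or a cutoff/difference-quotient argument)\ldots yields $u\in H^1$ with the scaled bound'' --- is not actually routine and in fact does not follow. The solution $u$ produced by Hahn--Banach has no boundary condition, and from $u\in L^2(\Omega)$, $\Delta u\in L^2(\Omega)$ alone one cannot conclude $u\in H^1(\Omega)$: there are harmonic functions in $L^2$ of a bounded planar domain whose gradient is not in $L^2$. Moreover, you cannot feed $u$ back into the Carleman estimate, since that estimate is only for functions in $C^\infty_0(\Omega)$. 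So the regularity upgrade is a real gap, not a detail. The missing idea is precisely the $H^{-1}_{\textrm{scl}}$ strengthening of the Carleman estimate, which is what turns the duality argument into a statement about $H^1_{\textrm{scl}}$ rather than $L^2$.

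Two smaller remarks. First, the adjoint of $L_\varphi=e^{\varphi/h}h^2L_{A,q}e^{-\varphi/h}$ with $A$ real-valued and $\varphi$ real is $L_\varphi^*=e^{-\varphi/h}h^2L_{A,\overline{q}}e^{\varphi/h}$, i.e.\ $q\mapsto\overline{q}$ but $A$ unchanged; your ``$-A$'' is a slip, though harmless since the Carleman estimate holds for any $W^{1,\infty}$ magnetic potential. Second, your observation that the adjoint is a conjugation with weight $-\varphi$, and that $-\varphi=(-\alpha)\cdot x$ still has $|-\alpha|=1$, is correct and is indeed why the Carleman estimate transfers to the adjoint; the paper notes the same fact more tersely.
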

\begin{proof} See appendix.
\end{proof}

Our basic strategy in constructing solutions of the form \eqref{eq_cgo_lip} 
is to write \eqref{eq_Sch_lip}, as
\begin{align}
\label{eq_split}
L_{\zeta} r = -L_{\zeta} a,
\end{align}
where $L_{\zeta} :=  e^{-x\cdot \zeta/h} h^2L_{A,q} e^{x\cdot \zeta/h}$. 
Then we first  search for a suitable $a$, after which we will get $r$ by
Proposition \ref{prop_solvability}. We must however take some care in choosing
$a$ and the way it depends on $h$, since we need later that 
$\| r \|_{H^1_{\emph{\textrm{scl}}}(\Omega)} \to 0$, sufficiently fast as $h \to 0$.
We need $a$ also to be smooth enough. This will be handled as in 
\cite{KnuSalo}.

We extend $A\in W^{1,\infty}(\Omega, \R^3)$ to a Lipschitz vector field, 
compactly supported  in $\tilde \Omega$, where $\tilde \Omega\subset \R^3$ is 
an open bounded set such that $\Omega\subset\subset \tilde \Omega$. We consider 
the mollification $A^\sharp := A*\psi_\epsilon\in
C_0^\infty(\tilde \Omega,\R^3)$.
Here $\epsilon>0$ is small and 
$\psi_\epsilon(x)=\epsilon^{-3}\psi(x/\epsilon)$ is the usual mollifier with 
$\psi\in C^\infty_0(\R^3)$, $0\le \psi\le 1$, and 
$\int \psi dx=1$.  We write 
$A^\flat=A-A^\sharp$. Notice that we have the following estimates for
$A^\flat$,
\begin{equation}
\label{eq_flat_est}
\|A^\flat\|_{L^\infty(\Omega)}=\mathcal{O}(\epsilon),
\end{equation}
\[
 \|\p^\alpha A^\sharp\|_{L^\infty(\Omega)}=\mathcal{O}(\epsilon^{-|\alpha|}) \quad 
\textrm{for all}\quad \alpha,
\]
as $\epsilon\to 0$. 

We shall work with a complex  $\zeta = \zeta_0+\zeta_1$ depending slightly on $h$, for which
\begin{align} \label{eq_zassum}
&\zeta\cdot \zeta = 0,\;
\zeta_0:= \alpha + i \beta,\; \alpha,\beta \in S^2,\; \alpha \cdot \beta=0, \\
&\zeta_0 \, \text{ independent of $h$ and }
\;\zeta_1=\mathcal{O}(h),\; \text{as $h\to 0$}. \nonumber
\end{align}
By expanding the conjugated operator we write the right hand
side of \eqref{eq_split} as
\begin{align}
\label{eq_La}
L_\zeta a 
=&(-h^2\Delta 
-2i(-i\zeta_0+hA)\cdot h\nabla -2\zeta_1\cdot h\nabla+h^2A^2 \nonumber \\
&-2ih\zeta_0\cdot (A^\sharp+A^\flat)-2ih\zeta_1\cdot A 
-ih^2(\nabla\cdot A)+h^2q)a. 
\end{align}
Now we want $a$ to be such that this expression decays more rapidly than
$\mathcal{O}(h)$, as $h \to 0$.

Consider the operator in \eqref{eq_La}, ignoring for the time being $a$ and its
possible dependence on $h$. We would like to eliminate from this operator
the terms that are first order in $h$.
Notice first that $\zeta_1 \in\mathcal{O}(h)$ and that we can control 
$\|A^\flat\|_{L^\infty(\Omega)}$ with $h$,
if we choose $\epsilon$ to be dependent on $h$.
Then in an attempt to eliminate first order terms in $h$, it is
natural to search for an $a$ for which
\begin{align}
\label{eq_az0eq}
\zeta_0 \cdot \nabla  a =  -i \zeta_0 \cdot A^\sharp  a, \quad \text{ in
$\Omega$}.
\end{align}

We will look for a solution of the form $a=e^\Phi$. The above equation becomes then
\begin{align} \label{eq_trans1}
\zeta_0 \cdot \nabla \Phi =  
-i \zeta_0 \cdot A^\sharp  , \quad \text{ in $\Omega$}.
\end{align}
Pick a
$\gamma \in S^2$, such that $ \gamma \bot \{  \alpha, 
\beta\}$. 

Next we consider the above equation in coordinates $y$, associated with the
basis $\{\alpha,\beta,\gamma\}$. Let $T$ be the coordinate transform
$y = Tx:=( x \cdot \alpha, x \cdot \beta, x \cdot \gamma)$.
Using the chain rule and the fact that $T^{-1} = T^*$, 
one gets that\footnote{Here $T^*$ is the transpose of $T$.}
\begin{align*}
\nabla(\Phi \circ T^{-1})(Tx) 
= T [\nabla \Phi(x)]^*.
\end{align*}
We therefore have that
\begin{align*}
(1,i,0) \cdot \nabla(\Phi \circ T^{-1})(Tx) 
&= (1,i,0) \cdot T [\nabla \Phi(x)]^* \\
&=
(\alpha \cdot\nabla+ i\beta\cdot\nabla)\Phi(x) \\
&= \zeta_0 \cdot \nabla\Phi(x).
\end{align*}
Equation \eqref{eq_trans1} gives hence the $\bar\p$-equation 
\begin{align} \label{eq_dbar}
2\p_{\bar z} \cdot (\Phi \circ T^{-1}) (y)   
 = -i \zeta_0 \cdot (A^\sharp \circ T^{-1})(y),
\end{align}
where $\p_{\bar z} = (\p_{y_1}+i\p_{y_2})/2$.
We will solve this using the Cauchy operator
\begin{align*}
N^{-1}f (x) := \frac{1}{\pi} \int_{\R^2} 
\frac{1}{s_1 + is_2} f(x - (s_1,s_2,0)) ds_1 ds_2,
\end{align*}
which is an inverse for the $\bar \p$-operator,
$N:= (\partial_{y_1} + i\partial_{y_2})/2$
(see e.g. \cite{Horm_book_2} Theorem 1.2.2).
We will need the following straight forward continuity result for the Cauchy
operator.

\newtheorem{cauchyop}[thm]{Lemma}
\begin{cauchyop} \label{cauchyop} 
Let $r>0$ and $f \in W^{k,\infty}(\R^3)$, $k\geq0$ and assume that $\supp(f) \subset
B(0,r)$. Then 
\begin{align*}
\|N^{-1}f\|_{W^{k,\infty}(\R^3)} \leq C_k\|f\|_{W^{k,\infty}(\R^3)} 
\end{align*}
for some constant $C_k>0$.
And if $f \in C_0(\R^3)$, then $N^{-1}f \in C(\R^3)$.
\end{cauchyop}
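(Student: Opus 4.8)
The plan is to prove the two assertions of Lemma \ref{cauchyop} — the $W^{k,\infty}$ bound and the preservation of continuity — essentially by hand, exploiting the explicit convolution structure of $N^{-1}$. Recall that $N^{-1}f(x) = \frac{1}{\pi}\int_{\R^2} \frac{1}{s_1+is_2} f(x-(s_1,s_2,0))\,ds_1\,ds_2$, which is a convolution in the first two variables (with $x_3$ a passive parameter) against the locally integrable kernel $K(s) = \frac{1}{\pi(s_1+is_2)}$, whose singularity at the origin is only $|s|^{-1}$ and hence integrable on bounded sets in $\R^2$.

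First I would establish the $L^\infty$ bound, the case $k=0$. Since $\supp f \subset B(0,r)$, for $x-(s_1,s_2,0)$ to lie in $\supp f$ we need $(s_1,s_2)$ to lie in a disk of radius $r$ (once $|x|$ is restricted; but actually we should be slightly careful — if $|x|$ is large the integrand vanishes identically, so the estimate $\|N^{-1}f\|_\infty \le \|f\|_\infty \cdot \frac{1}{\pi}\int_{|s|\le 2r}\frac{ds}{|s|}$ is what one wants, giving $C_0 = 2r$; more precisely one restricts the $s$-integration to the region where $x-(s_1,s_2,0)\in B(0,r)$, which for fixed $x$ is a disk of radius $r$, so $\frac{1}{\pi}\int \frac{ds}{|s|}$ over a radius-$r$ disk equals $2r$). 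This yields $\|N^{-1}f\|_{L^\infty(\R^3)} \le 2r\,\|f\|_{L^\infty(\R^3)}$. For the higher-order bound, write the derivatives using the second form of the convolution: differentiating under the integral sign, $\partial^\alpha (N^{-1}f) = N^{-1}(\partial^\alpha f)$ for any multi-index $\alpha$ with $|\alpha|\le k$ (this is legitimate because the kernel is in $L^1_{loc}$ and $\partial^\alpha f$ is bounded with compact support, so dominated convergence applies to the difference quotients). Since $\supp(\partial^\alpha f)\subset B(0,r)$ as well, the $k=0$ estimate applied to each $\partial^\alpha f$ gives $\|\partial^\alpha(N^{-1}f)\|_{L^\infty} \le 2r\,\|\partial^\alpha f\|_{L^\infty} \le 2r\,\|f\|_{W^{k,\infty}}$, and summing over $|\alpha|\le k$ gives the claim with $C_k$ depending only on $k$ and $r$.

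For the continuity statement, suppose $f\in C_0(\R^3)$. I would show $N^{-1}f$ is continuous by a standard convolution-continuity argument: fix $x_0$ and estimate $|N^{-1}f(x) - N^{-1}f(x_0)|$ by splitting the $s$-integral into a small disk $|s|<\delta$ around the kernel singularity and its complement. On the small disk, the kernel contribution is bounded by $2\delta\,\|f\|_\infty$, uniformly in $x$ near $x_0$; on the complement $|s|\ge\delta$ the kernel is bounded and $f$ is uniformly continuous (being continuous with compact support), so the difference tends to $0$ as $x\to x_0$. Choosing $\delta$ first small and then letting $x\to x_0$ gives continuity at $x_0$.

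The main obstacle — really the only delicate point — is justifying the differentiation under the integral sign in the presence of the kernel singularity. I expect to handle this by using the translated form $N^{-1}f(x) = \frac{1}{\pi}\int_{\R^2}\frac{1}{s_1+is_2}f(x-(s_1,s_2,0))\,ds_1\,ds_2$, in which the $x$-dependence sits entirely in the smooth factor $f$, so difference quotients in $x$ converge pointwise to $\partial^\alpha f(x-(s_1,s_2,0))$ and are dominated by $\|f\|_{W^{|\alpha|,\infty}}\,|s_1+is_2|^{-1}\mathbf{1}_{\{|s|\le 2r\}}$, which is integrable; dominated convergence then yields $\partial^\alpha(N^{-1}f) = N^{-1}(\partial^\alpha f)$. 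Everything else is routine. Note that we do not need to verify here that $N^{-1}$ inverts $\bar\partial$ — that is quoted from \cite{Horm_book_2}.
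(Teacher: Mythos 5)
The paper does not prove this lemma in the text; the proof there is simply a citation to \cite{MS3}, so there is no ``paper's route'' to compare against. Your direct verification is correct and is the standard one: the key observations are that the kernel $1/(\pi(s_1+is_2))$ is $L^1_{\textrm{loc}}$ in $\R^2$, that compact support of $f$ confines the effective $s$-integration to a set of bounded measure for each fixed $x$, and that $\partial^\alpha$ can be passed through the convolution onto $f$. Two minor glosses worth tightening. First, the region $\{s : x-(s_1,s_2,0)\in B(0,r)\}$ is a disk of radius at most $r$ but \emph{not} centered at the origin, so the clean identity ``$\frac{1}{\pi}\int_D ds/|s|$ over a radius-$r$ disk equals $2r$'' should be an inequality $\le 2r$; this holds by the rearrangement (bathtub) inequality since $1/|s|$ is radially decreasing, but that step deserves a word. (Your earlier line with $\int_{|s|\le 2r}$ also gives $4r$, not $2r$, though this is harmless.) Second, for $f$ merely in $W^{k,\infty}$, the top-order difference quotients converge to $\partial^\alpha f(x-(s_1,s_2,0))$ only for a.e.\ $s$, not pointwise, and one then still needs to pass from an a.e.\ classical derivative to a weak derivative; a slightly cleaner route to $\partial^\alpha(N^{-1}f)=N^{-1}(\partial^\alpha f)$ is to pair $N^{-1}f$ against $\varphi\in C^\infty_0(\R^3)$, apply Fubini (the support hypothesis makes the double integral absolutely convergent), and invoke the definition of the weak derivative of $f$. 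Neither of these affects the conclusion; the proof is sound.
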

\begin{proof} See e.g. \cite{MS3}.
\end{proof}

Returning to \eqref{eq_dbar} we get that
$\Phi = \frac{1}{2}N^{-1} (-i \zeta_0 \cdot (A^\sharp \circ T^{-1})) \circ T$. Or more explicitly  that
\begin{align} 
\label{eq_Phi}
\Phi(x,\zeta_0;\epsilon)
&=
\frac{1}{2\pi} \int_{\R^2} 
\frac{-i \zeta_0 \cdot A^\sharp(x-T^{-1}(s_1,s_2,0))}{s_1+is_2}ds_1ds_2,
\end{align}
here $T^{-1}(s_1,s_2,0)=s_1\alpha+s_2\beta$.
We have thus found a solution $a = e^\Phi$ to equation \eqref{eq_az0eq}.
We will choose $\epsilon$ so that it depends on $h$, which implies that $a$ will
depend on $h$.
In order to determine how the norm of $r$ will depend on $h$ and also for later
estimates, we will need to see how $\|\p^\alpha a\|_{L_\infty}$ depends on $h$.

\begin{lem} Equation \eqref{eq_az0eq} has a solution $a\in
C^\infty(\overline{\Omega})$ satisfying the estimates
\begin{align}
\label{eq_ampl_est}
\|\p^\alpha a\|_{L^\infty(\Omega)}\le C_\alpha \epsilon^{-|\alpha|}\quad 
\textrm{for all}\quad \alpha. 
\end{align}
\end{lem}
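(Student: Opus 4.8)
The plan is to take $a=e^{\Phi}$, with $\Phi$ the explicit function in \eqref{eq_Phi}, and to check the three things the statement demands: that it solves \eqref{eq_az0eq}, that it is smooth on $\overline{\Omega}$, and that it obeys \eqref{eq_ampl_est}. The equation is the easy part: by the derivation culminating in \eqref{eq_Phi}, the function $\Phi$ solves the transport equation \eqref{eq_trans1}, i.e. $\zeta_0\cdot\nabla\Phi=-i\,\zeta_0\cdot A^{\sharp}$ in $\Omega$; hence $\zeta_0\cdot\nabla a=(\zeta_0\cdot\nabla\Phi)e^{\Phi}=-i(\zeta_0\cdot A^{\sharp})\,a$, which is precisely \eqref{eq_az0eq}.

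For smoothness, I would use that $A^{\sharp}=A*\psi_{\epsilon}\in C_0^{\infty}(\tilde\Omega,\R^3)$, so that $g:=-i\,\zeta_0\cdot(A^{\sharp}\circ T^{-1})$ is $C_0^{\infty}$ with support in a fixed ball $B(0,r)$. Differentiating under the integral sign in the definition of $N^{-1}$ (equivalently, noting $\partial^{\gamma}N^{-1}g=N^{-1}\partial^{\gamma}g$, which is continuous for every $\gamma$ by Lemma \ref{cauchyop}), one gets $N^{-1}g\in C^{\infty}(\R^3)$; since $T$ is linear, $\Phi=\frac{1}{2}(N^{-1}g)\circ T\in C^{\infty}(\R^3)$, and therefore $a=e^{\Phi}\in C^{\infty}(\overline{\Omega})$.

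The estimate \eqref{eq_ampl_est} is obtained in two steps. First I would bound $\Phi$: from the mollifier estimates \eqref{eq_flat_est} and the line following it, $\|\partial^{\beta}A^{\sharp}\|_{L^{\infty}(\Omega)}=\mathcal{O}(\epsilon^{-|\beta|})$ for all $\beta$, with $\|A^{\sharp}\|_{L^{\infty}}\le\|A\|_{L^{\infty}}$ uniformly in $\epsilon$; since $T$ is orthogonal ($T^{-1}=T^{*}$), composition with $T^{\pm1}$ preserves $W^{k,\infty}$ norms up to constants, and $\zeta_0$ is independent of $h$, so $\|g\|_{W^{k,\infty}(\R^3)}\le C_k\epsilon^{-k}$ for every $k$ (with $\|g\|_{L^{\infty}}=\mathcal{O}(1)$). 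Applying Lemma \ref{cauchyop} and composing back with $T$ gives $\|\partial^{\alpha}\Phi\|_{L^{\infty}(\Omega)}\le C_{\alpha}\epsilon^{-|\alpha|}$ for every $\alpha$, and in particular $\|\Phi\|_{L^{\infty}(\Omega)}\le C$ uniformly in $\epsilon$. Second, I would transfer this to $a=e^{\Phi}$ via the Fa\`{a} di Bruno (Leibniz) formula: $\partial^{\alpha}e^{\Phi}$ is a finite sum of terms of the form $e^{\Phi}\prod_{j}\partial^{\beta_{j}}\Phi$ with $|\beta_{j}|\ge1$ and $\sum_{j}\beta_{j}=\alpha$; bounding each factor by $C_{\beta_{j}}\epsilon^{-|\beta_{j}|}$ and $e^{\Phi}$ by $e^{C}$, and using $\sum_{j}|\beta_{j}|=|\alpha|$, each term is $\mathcal{O}(\epsilon^{-|\alpha|})$, whence \eqref{eq_ampl_est} after summing the finitely many terms.

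The only point needing attention is this last combinatorial step, and it is routine: the total differentiation order distributed among the factors is exactly $|\alpha|$, so the negative powers of $\epsilon$ add up to $\epsilon^{-|\alpha|}$ and no worse, while the uniform bound on $\|\Phi\|_{L^{\infty}}$ keeps $e^{\Phi}$ harmless. Everything else follows directly from Lemma \ref{cauchyop}, the mollifier estimates, and the orthogonality of $T$.
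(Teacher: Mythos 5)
Your proof is correct and follows the same route as the paper's: bound $\Phi$ and its derivatives via Lemma \ref{cauchyop} together with the mollifier estimates \eqref{eq_flat_est}, then transfer to $a=e^\Phi$. The paper dispatches the case $|\alpha|\ge 1$ with the phrase ``argue similarly''; your Fa\`a di Bruno step, with the observation that the multi-indices in each product sum to $\alpha$ so the negative powers of $\epsilon$ total exactly $\epsilon^{-|\alpha|}$ while $e^\Phi$ stays bounded by the $\alpha=0$ estimate, is precisely what that phrase hides, so you are supplying the detail rather than deviating in method.
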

\begin{proof} Existence of a solution is a consequence of the considerations
above. Therefore we need only to prove the norm estimate.

For $\alpha=0$, 
Lemma \ref{cauchyop} gives that $\|\Phi\|_{L^\infty(\Omega)}\leq C$. From
this it follows that $\|e^\Phi\|_{L^\infty(\Omega)}\leq C'$.
For $|\alpha|>1$ argue similarly using the estimates \eqref{eq_flat_est}.

\end{proof}

We can now write the $L^\infty(\Omega)$ norm of \eqref{eq_La} as
\begin{align*}
\|L_\zeta a\|_{L^\infty(\Omega)}= 
\|-h^2 L_{A,q}a +2ih\zeta_0\cdot A^\flat a+2\zeta_1\cdot h\nabla 
a+2ih\zeta_1\cdot A a\|_{L^\infty(\Omega)}.
\end{align*}
Using \eqref{eq_flat_est}, \eqref{eq_ampl_est} and the fact that 
$\zeta_1=\mathcal{O}(h)$ we have that 
\begin{align*}
\|L_\zeta a\|_{L^\infty(\Omega)} = \mathcal{O}(h^2\epsilon^{-2}+h\epsilon).
\end{align*}
Choosing $\epsilon = h^{1/3}$, gives finally $\|L_\zeta a\|_{L^\infty(\Omega)} =
\mathcal{O}(h^{4/3})$, as $h \to 0$.

Finally to solve \eqref{eq_split} for $r$, we rewrite it as  
\begin{align} \label{eq_req}
e^{-x\cdot \Re \zeta/h} h^2L_{A,q} e^{x\cdot \Re \zeta/h}(e^{ix\cdot \Im \zeta/h}r)
= -e^{ix\cdot \Im \zeta/h}L_\zeta a.
\end{align}
If we replace $e^{ix\cdot \Im \zeta/h}r$ by $\tilde r$, then
the solvability result \ref{prop_solvability}, shows that we can find a solution $\tilde r$,
so that a solution $r$ to \eqref{eq_req} is given by $r = e^{-ix\cdot \Im \zeta/h} \tilde r$.

To get a norm estimate for $r$, notice that for the left hand side of \eqref{eq_req}
we have
\begin{align*}
\|e^{ix\cdot \Im \zeta/h}L_\zeta a\|_{L^\infty(\Omega)}  
= \mathcal{O}(h^{4/3}),
\end{align*}
as $h \to 0$. The solvability result \ref{prop_solvability} gives then that
\begin{align*}
\| \tilde r \|_{H^1_{\emph{\textrm{scl}}}(\Omega)}
= \mathcal{O}(h^{1/3}),
\end{align*}
as $h \to 0$, which implies that $\|r\|_{H^1_{\emph{\textrm{scl}}}(\Omega)}  = \mathcal{O}(h^{1/3})$,
as $h \to 0$.

Thus we have obtained the following existence result for complex geometric optics
solutions.

\begin{prop}
\label{prop_CGO_Lip}
Let $A\in W^{1,\infty}(\Omega,\R^3)$ and $q\in L^\infty(\Omega,\C)$. Then for 
$h>0$ small enough, there exist solutions $u\in H^1(\Omega)$,
of the equation 
\[
L_{A,q}u=0\quad \textrm{in}\quad \Omega, 
\]
that are of the form 
\[
u(x,\zeta;h)=e^{x\cdot\zeta/h}(a(x,\zeta;h)+r(x,\zeta;h)),
\]
where $\zeta\in \C^3$, is of the form given by
\eqref{eq_zassum}, $a\in C^\infty(\overline{\Omega})$ solves the
equation \eqref{eq_az0eq},  and where $a$ and $r$ satisfy the estimates
\[
\|\p^\alpha a\|_{L^\infty(\Omega)}\le C_\alpha h^{-|\alpha|/3} 
\quad \text{and} \quad
\|r\|_{H^1_{\emph{\textrm{scl}}}(\Omega)}=\mathcal{O}(h^{1/3}). 
\]
\end{prop}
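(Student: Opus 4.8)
The plan is to assemble the ingredients built up above. We look for $u$ in the form \eqref{eq_cgo_lip}; substituting into $L_{A,q}u=0$ and using $L_\zeta=e^{-x\cdot\zeta/h}h^2L_{A,q}e^{x\cdot\zeta/h}$, the equation is equivalent to \eqref{eq_split}, so the construction splits into choosing a good amplitude $a$ and then solving for the remainder $r$.

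For the amplitude, I would fix $\zeta=\zeta_0+\zeta_1$ as in \eqref{eq_zassum}, extend $A$ to a compactly supported Lipschitz field on a bounded open set $\tilde\Omega$ with $\Omega\subset\subset\tilde\Omega$, and set $A^\sharp=A*\psi_\epsilon$, $A^\flat=A-A^\sharp$, which satisfy \eqref{eq_flat_est}. Taking $a=e^\Phi$, equation \eqref{eq_az0eq} reduces to \eqref{eq_trans1}, and after the orthogonal change of variables $y=Tx$ associated with $\{\alpha,\beta,\gamma\}$ this becomes the $\bar\p$-equation \eqref{eq_dbar}; solving it with the Cauchy operator gives the explicit $\Phi$ of \eqref{eq_Phi}. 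Lemma \ref{cauchyop} yields $\|\Phi\|_{L^\infty(\Omega)}\le C$, hence $\|a\|_{L^\infty(\Omega)}\le C$, and differentiating the formula together with \eqref{eq_flat_est} gives \eqref{eq_ampl_est}. Because $a$ satisfies \eqref{eq_az0eq}, the terms in \eqref{eq_La} that are first order in $h$ and involve $\zeta_0\cdot A^\sharp$ cancel, leaving $L_\zeta a=-h^2L_{A,q}a+2ih\zeta_0\cdot A^\flat a+2\zeta_1\cdot h\nabla a+2ih\zeta_1\cdot A a$. Using $\zeta_1=\mathcal{O}(h)$, \eqref{eq_flat_est} and \eqref{eq_ampl_est}, this is $\mathcal{O}(h^2\epsilon^{-2}+h\epsilon)$; the choice $\epsilon=h^{1/3}$ makes it $\mathcal{O}(h^{4/3})$ and converts \eqref{eq_ampl_est} into $\|\p^\alpha a\|_{L^\infty(\Omega)}\le C_\alpha h^{-|\alpha|/3}$.

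For the remainder I would rewrite \eqref{eq_split} in the form \eqref{eq_req}, with real phase $x\cdot\Re\zeta=\alpha\cdot x+\mathcal{O}(h)$ and with the oscillatory factor absorbed into $\tilde r:=e^{ix\cdot\Im\zeta/h}r$. Since $\Omega$ is bounded, the right-hand side $f:=-e^{ix\cdot\Im\zeta/h}L_\zeta a$ satisfies $\|f\|_{L^2(\Omega)}\le C\|L_\zeta a\|_{L^\infty(\Omega)}=\mathcal{O}(h^{4/3})$, so Proposition \ref{prop_solvability}, applied with phase $\alpha\cdot x$, $|\alpha|=1$, produces $\tilde r\in H^1(\Omega)$ with $\|\tilde r\|_{H^1_{\textrm{scl}}(\Omega)}\le (C/h)\|f\|_{L^2(\Omega)}=\mathcal{O}(h^{1/3})$. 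Setting $r=e^{-ix\cdot\Im\zeta/h}\tilde r$ and using $h\nabla r=e^{-ix\cdot\Im\zeta/h}(h\nabla\tilde r-i\,\Im\zeta\,\tilde r)$ with $|\Im\zeta|=\mathcal{O}(1)$, multiplication by the unimodular factor costs only a constant, so $\|r\|_{H^1_{\textrm{scl}}(\Omega)}=\mathcal{O}(h^{1/3})$. This produces a solution $u$ of the required form with the stated estimates.

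The step needing the most care is the $h$-bookkeeping around the mollification: one solves the transport equation with the smoothed potential $A^\sharp$ (the true $A$ being too rough to serve as an amplitude) and must then verify that the leftover error $L_\zeta a$ really is $\mathcal{O}(h^2\epsilon^{-2}+h\epsilon)$, so that the single choice $\epsilon=h^{1/3}$ simultaneously beats this against the derivative growth $\epsilon^{-|\alpha|}$ of $a$ and keeps the error small enough for Proposition \ref{prop_solvability} to close the argument; one should also keep track that the $\mathcal{O}(h)$ adjustment of $\zeta$ and the oscillatory conjugation do not spoil the phase hypothesis of Proposition \ref{prop_solvability}. Everything else is a direct invocation of Proposition \ref{prop_solvability}, Lemma \ref{cauchyop} and the estimates \eqref{eq_flat_est}, \eqref{eq_ampl_est}.
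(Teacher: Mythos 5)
Your proof follows the paper's construction step for step: the same decomposition $L_\zeta r=-L_\zeta a$, the same mollification $A=A^\sharp+A^\flat$ with $\epsilon=h^{1/3}$, the same transport/$\bar\p$-equation solved via the Cauchy operator for $\Phi$, the same estimate $\|L_\zeta a\|_{L^\infty}=\mathcal{O}(h^2\epsilon^{-2}+h\epsilon)$, and the same conjugation by $e^{ix\cdot\Im\zeta/h}$ before invoking Proposition \ref{prop_solvability}. The small points you flag (that the unimodular conjugation only costs a constant in the scaled $H^1$ norm, and that $\Re\zeta$ should coincide with the unit vector $\alpha$ of the Carleman weight — which in the paper's applications it does, since the $h$-dependent correction in \eqref{eq_zeta_1_2} is purely imaginary) are accurate observations consistent with, though not spelled out in, the paper's argument.
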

\begin{flushright}$\Box$
\end{flushright}

\begin{rem} 
\label{rem_com_geom_1} In the sequel, we  need complex geometric optics 
solutions belonging to $H^{2}(\Omega)$. To obtain such solutions, let 
$\Omega'\supset\supset\Omega$ be a bounded domain with smooth boundary,  and 
let us extend $A\in W^{1,\infty}(\Omega,\R^3)$ and $q\in L^\infty(\Omega)$ to 
$W^{1,\infty}(\Omega',\R^3)$ and $L^\infty(\Omega')$-functions, respectively. 
By elliptic regularity, the complex geometric optics solutions, constructed on 
$\Omega'$, according to Proposition \emph{\ref{prop_CGO_Lip}},   belong to  
$H^{2}(\Omega)$.

\end{rem}

\begin{rem} \label{rem_CGO_Lip}
Recall that $\Phi = \frac{1}{2}N^{-1} (-i (\alpha+i\beta) \cdot (A^\sharp \circ T^{-1})) \circ T$. Lemma \ref{cauchyop} implies that
$N^{-1}:C_0(\Omega)\to C(\Omega)$ is continuous. The estimates \eqref{eq_flat_est}
show that $A^\sharp \to A$ uniformly on $\Omega$. It follows that there is an
$\Phi^0$, s.t.
\[
\|\Phi(x,\zeta_0;h^{1/3})- \Phi^0 \|_{L^\infty(\Omega)}\to 0, \quad h\to 0,
\]
where $\Phi^0=\frac{1}{2}N^{-1} (-i (\alpha+i\beta) \cdot (A \circ T^{-1})) \circ T$ solves the equation
\begin{align}
\label{eq_Phi0}
\zeta_0 \cdot \nabla \Phi^0 =-i \zeta_0 \cdot A \quad \textrm{in}\quad \Omega, 
\end{align}
as $h \to 0$.
\end{rem}

\begin{rem}  \label{rem_g}
We shall later use a slightly more general form for the amplitude $a$ 
in the CGO solutions. Namely we suppose that $a =
g e^{\Phi}$, where $g \in C^{\infty}(\ov{\Omega})$, with 
\begin{align}
\zeta_0 \cdot \nabla g =0. \label{eq:gcond}
\end{align}
This means that $g$ is holomorphic in a plane spanned by $\alpha$ and
$\beta$.
Notice also that by picking $a=ge^\Phi$, we get by \eqref{eq_az0eq} that
\begin{align*}
 \zeta_0 \cdot g\nabla \Phi = -i \zeta_0 \cdot gA^\sharp,
\end{align*}
in place of \eqref{eq_trans1}. But the $\Phi$ solving \eqref{eq_trans1}
also solves the above. Hence we can use the same argument to obtain the $\Phi$
for the above equation, as earlier.

We thus obtain CGO solutions of the form
\begin{align*}
u = e^{x\dotc \zeta/h}(ge^{\Phi} + r_g),
\end{align*}
where $\Phi$ is the same as when $a$ is of the earlier of form with no $g$.

Notice also that setting $a=ge^{\Phi}$ does not affect the norm estimates on $a$
in Proposition \ref{prop_CGO_Lip}, since $g$ does not depend on $h$.
\end{rem}

\subsection{Recovering the magnetic field}

The aim of this section is to prove the first part of Theorem \ref{thm_2_inverse}, by
showing that the curl of the magnetic potential is
determined by the DN-map.
We use again similar notations as in Subsection \ref{sec:IA}, i.e. 
\[
\Bm:=\HS\cap B,\quad B_+:=\R^3_+\cap B ,\quad l:=\p\HS\cap B,
\]
where $B$ is an open ball in $\R^3$, containing the supports of the potentials 
$A_j$ and $q_j$, $j=1,2$.
The first step in the argument will be to 
construct complex geometric optics solutions $u_1$ and $u_2$, belonging to the 
spaces $W_1(\Bm)$ and $W_2^*(\Bm)$ (defined in Section \ref{sec:IA}) and then to
examine the limit of \eqref{eq_sec3_4} as $h \to 0$.

For $u_1\in W_1(\Bm)$ and $u_2\in W_2^*(\Bm)$, we have that 
$u_j|_{l}=0$, $j=1,2$. To obtain solutions that satisfy this condition, we will
first choose solutions defined on the bigger set $B = B_+ \cup l \cup \Bm $.

The parameters $\zeta$ for the complex geometric optics
solutions will be picked as follows. We will assume that   
\begin{align}
\label{eq_gamma-assum}
\xi,\gamma_1,\gamma_2\in\R^3,\; |\gamma_1|=|\gamma_2|=1\; 
\text{ and that }\{\gamma_1,\gamma_2,\xi\} \; \text{is orthogonal}. 
\end{align}
Similarly to \cite{S1}, we set 
\begin{align}
\label{eq_zeta_1_2}
\zeta_1&=\frac{ih\xi}{2}+\gamma_1+ i\sqrt{1-h^2\frac{|\xi|^2}{4}}\gamma_2, \\ 
\zeta_2&=-\frac{ih\xi}{2}-\gamma_1+i\sqrt{1-h^2\frac{|\xi|^2}{4}}\gamma_2, \nonumber 
\end{align}
so that $\zeta_j\cdot\zeta_j=0$, $j=1,2$, and 
$(\zeta_1+\overline{\zeta_2})/h=i\xi$. Here $h>0$ is a small 
semiclassical parameter.  

We need to extend the potentials $A_j$ and $q_j$, $j=1,2$, to  $B_+$.  For
the component functions 
$A_{j,1}$, $A_{j,2}$, and $q_j$, we do an even extension, and  for $A_{j,3}$, we 
do an odd extension, i.e.,  for $j=1,2$ we set,
\begin{align*}
\tilde A_{j,k}(x)&=\begin{cases} A_{j,k}(x),& x_3<0,\\
 A_{j,k}(\tilde x),& x_3>0,
\end{cases},\quad k=1,2,\\
\tilde A_{j,3}(x)&=\begin{cases} A_{j,3}(x),& x_3<0,\\
- A_{j,3}(\tilde x),& x_3>0,
\end{cases}\\
\tilde q_j(x)&=\begin{cases} q_j(x),& x_3<0,\\
 q_j(\tilde x),& x_3>0,
\end{cases}
\end{align*}  
where $\tilde x := (x_1,x_2,-x_3)$.
By Remark \ref{rem_gauge} we can take $A_{j,3}|_{x_3=0}=0$, from which it follows  
that  $\tilde A_j\in W^{1,\infty}(B)$ and 
$\tilde q_j\in L^{\infty}(B)$, $j=1,2$.

We can now by Proposition \ref{prop_CGO_Lip} and Remark \ref{rem_com_geom_1}
pick complex geometric optics solutions  $\tilde u_1$ in $H^2(B)$,
\[
 \tilde u_1(x,\zeta_1;h)=e^{x\cdot \zeta_1/h} 
(e^{\Phi_1(x,\gamma_1+i\gamma_2;h)}+r_1(x,\zeta_1; h))
\]
of the equation $(L_{\tilde A_1, \tilde q_1}-k^2) \tilde u_1=0$ in $B$, 
where $\Phi_1\in C^{\infty}(\overline{B})$.
By Remark \ref{rem_CGO_Lip}, $\Phi_1 \to \Phi_1^0$ in 
the $L^\infty$-norm as $h\to 0$, where 
$\Phi_1^0$ solves the equation
\begin{align}
\label{eq_Phi10}
(\gamma_1+i\gamma_2) \cdot \nabla \Phi_1^0 = -i(\gamma_1+i\gamma_2) \cdot \tilde A_1 \quad\text{in}\quad B.
\end{align}
To obtain a function that is zero on the plane $x_3=0$, we set
\begin{align}
\label{eq_u_1-cgo}
u_1(x) := \tilde u_1(x)-\tilde u_1(\tilde x),\quad x\in \Bm \cup l. 
\end{align}
Then it is easy to check that  $u_1\in W_1(\Bm)$.

We can similarly pick by Proposition \ref{prop_CGO_Lip} and Remark \ref{rem_com_geom_1},
complex geometric optics solutions $\tilde u_2$ in $H^2(B)$,
\[
\tilde u_2(x,\zeta_2;h)=e^{x\cdot \zeta_2/h} 
(e^{\Phi_2(x,-\gamma_1+i\gamma_2;h)}+r_2(x,\zeta_2; h))
\]
of the equation $(L_{\tilde A_2,\overline{\tilde q_2}}-k^2)\tilde u_2=0$ in 
$B$, 
where $\Phi_2\in C^{\infty}(\overline{B})$.
By Remark \ref{rem_CGO_Lip}, $\Phi_2 \to \Phi_2^0$ in 
the $L^\infty$-norm as $h\to 0$, where 
$\Phi_2^0$ solves the equation
\begin{align}
\label{eq_Phi20}
(-\gamma_1+i\gamma_2) \cdot \nabla \Phi_2^0 = -i(-\gamma_1+i\gamma_2) \cdot \tilde A_1 \quad\text{in}\quad B.
\end{align}
To obtain a function that is zero on the plane $x_3=0$, we set
\begin{align}
\label{eq_u_2-cgo}
u_2(x) := \tilde u_2(x)-\tilde u_2(\tilde x),\quad x\in \Bm \cup l. 
\end{align}
Then it is easy to check that  $u_1\in W_2^*(\Bm)$.

The next step is to substitute the complex geometric optics solutions $u_1$ and 
$u_2$, given by \eqref{eq_u_1-cgo} and \eqref{eq_u_2-cgo}, respectively,  into 
the integral identity \eqref{eq_sec3_4}. This  will be done in the Lemma bellow.
We will use the abbreviations $P_1(x) := e^{\Phi_1(x)}+r_1(x)$ and $P_2(x) :=
e^{\Phi_2(x)}+r_2(x)$, so that
\begin{align*}
u_1(x)&=e^{x\cdot\zeta_1/h}P_1(x) -e^{\tilde x\cdot\zeta_1/h}P_1(\tilde x), \\
u_2(x)&=e^{x\cdot\zeta_2/h}P_2(x) -e^{\tilde x\cdot\zeta_2/h}P_2(\tilde x).
\end{align*}
For future references, it 
will be convenient to compute the product of the phases that occur in the terms 
$u_1\ov{u}_2, \nabla u_1\ov{u_2}$ and $u_1 \nabla \ov{u_2}$
\begin{equation}
\label{eq_phases}
\begin{aligned}
e^{x\cdot\zeta_1/h}e^{x\cdot\overline{\zeta_2}/h}&=e^{ix\cdot\xi},\quad 
e^{\tilde x\cdot\zeta_1/h}e^{\tilde x\cdot\overline{\zeta_2}/h}=e^{i\tilde x 
\cdot\xi},\\
e^{\tilde x 
\cdot\zeta_1/h}e^{x\cdot\overline{\zeta_2}/h}
&=e^{ix\cdot\xi}e^{i(0,0,-2x_3)\cdot\zeta_1/h}
=e^{ix\cdot  \xi_--2\gamma_{1,3} x_3/h},\\
e^{x\cdot\zeta_1/h}e^{\tilde x\cdot\overline{\zeta_2}/h}
&=e^{i\tilde x\cdot\xi}e^{i(0,0,2x_3)\cdot\zeta_1/h} 
=e^{ix\cdot \xi_++2\gamma_{1,3} x_3/h},
\end{aligned}
\end{equation}
where $\gamma_j=(\gamma_{j,1},\gamma_{j,2},\gamma_{j,3})$, $j=1,2$ and 
\[
\xi_\pm=\bigg(\xi_1,\xi_2,\pm 
\frac{2}{h}\sqrt{1-\frac{h^2|\xi|^2}{4}}\gamma_{2,3}\bigg).
\]
We restrict the choices of $\gamma_1$, by assuming that
\begin{align} \label{eq_gamma-rest}
\gamma_{1,3}= 0 \quad \text{and} \quad  \gamma_{2,3}\ne 0.
\end{align}
We need these conditions for the proof of the next Lemma. 
The first condition makes the above phases
purely imaginary, which avoids exponential growth of the terms, as $h\to0$ . 
The second condition implies that $|\xi_\pm|\to \infty$ as $h\to 0$.
This will be needed since, we will use the Riemann-Lebesgue Lemma 
to eliminate unwanted imaginary exponentials. 

Finally it will also be convenient to explicitly state the following norm estimates, 
which follow from Proposition \ref{prop_CGO_Lip} 
\begin{equation}
\label{eq_rem_amp}
\begin{aligned}
&\|e^{\Phi_j}\|_{L^\infty}=\mathcal{O}(1),\quad \|\nabla 
e^{\Phi_j}\|_{L^\infty}=\mathcal{O}(h^{-1/3}),\\
&\|r_j\|_{L^2}=\mathcal{O}(h^{1/3}),\quad \|\nabla 
r_j\|_{L^2}=\mathcal{O}(h^{-2/3}),\quad j=1,2,
\end{aligned}
\end{equation}
as $h\to0$.

\begin{lem}
\label{inter_prop_32}
If the assumptions of Proposition \ref{identity} hold,
then 
\begin{equation}
\label{eq_identity_main_2}
(\gamma_1+i\gamma_2)\cdot\int_{B}(\tilde A_2-\tilde 
A_1)e^{ix\cdot\xi}e^{\Phi_1^0+\ov{\Phi_2^0}}dx=0,
\end{equation}
where $\gamma_1,\gamma_2$ and $\xi$ satisfy \eqref{eq_gamma-assum} and \eqref{eq_gamma-rest}.
\end{lem}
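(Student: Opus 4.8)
The plan is to substitute the complex geometric optics solutions $u_1$ and $u_2$ from \eqref{eq_u_1-cgo} and \eqref{eq_u_2-cgo} into the integral identity \eqref{eq_sec3_4}, multiply the resulting equality by $h$, and let $h\to0$. First I would discard the negligible contributions. By \eqref{eq_phases} every product of a factor of $u_1$ with a factor of $\ov{u_2}$ carries a purely imaginary exponential (the restriction $\gamma_{1,3}=0$ from \eqref{eq_gamma-rest} is used here, to prevent the factors $e^{\mp 2\gamma_{1,3}x_3/h}$ from growing as $h\to0$); together with \eqref{eq_rem_amp} this shows that $\int_{\Bm}(A_1^2-A_2^2+q_1-q_2)u_1\ov{u_2}\,dx=\mathcal{O}(1)$, so $h$ times this integral tends to $0$. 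In the first integral of \eqref{eq_sec3_4}, whenever $\nabla$ falls on one of the amplitudes $P_j=e^{\Phi_j}+r_j$ rather than on an exponential, the estimates \eqref{eq_rem_amp} and the Cauchy--Schwarz inequality bound the corresponding term by $\mathcal{O}(h^{2/3})+\mathcal{O}(h^{1/3})$, which vanishes. Hence only the terms in which $\nabla$ differentiates an exponential survive the limit; they produce, from $u_1$, a factor $\zeta_1/h$ or its reflection $(\zeta_{1,1},\zeta_{1,2},-\zeta_{1,3})/h$ according to which exponential is hit, and likewise from $\ov{u_2}$, and the multiplication by $h$ neutralizes these factors.

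I would then expand $\nabla u_1\ov{u_2}-u_1\nabla\ov{u_2}$ into the four products obtained by pairing the two summands of $u_1$ with the two summands of $\ov{u_2}$, using \eqref{eq_phases}. Writing $w:=\zeta_1-\ov{\zeta_2}=2\gamma_1+2i\sqrt{1-h^2|\xi|^2/4}\,\gamma_2$ and letting $w^\ast$ be $w$ with its third component negated, the two ``diagonal'' products, after multiplication by $h$, contribute $\int_{\Bm}i(A_2-A_1)\cdot w\,e^{ix\cdot\xi}P_1\ov{P_2}\,dx$ and $\int_{\Bm}i(A_2-A_1)\cdot w^\ast\,e^{i\tilde x\cdot\xi}P_1(\tilde x)\ov{P_2}(\tilde x)\,dx$. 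In the second I would substitute $x\mapsto\tilde x$, which carries $\Bm$ onto $B_+$ and turns $e^{i\tilde x\cdot\xi}$ into $e^{ix\cdot\xi}$; the even/odd extension rules defining $\tilde A_j$ then give $(A_2-A_1)(\tilde x)\cdot w^\ast=(\tilde A_2-\tilde A_1)(x)\cdot w$ on $B_+$, so that the two pieces assemble into $i\int_B(\tilde A_2-\tilde A_1)\cdot w\,e^{ix\cdot\xi}P_1\ov{P_2}\,dx$ over $B=\Bm\cup l\cup B_+$. Since $w\to 2(\gamma_1+i\gamma_2)$, $r_j\to0$ in $L^2$, and $\Phi_j\to\Phi_j^0$ in $L^\infty$ (Remark \ref{rem_CGO_Lip}), this converges to $2i(\gamma_1+i\gamma_2)\cdot\int_B(\tilde A_2-\tilde A_1)e^{ix\cdot\xi}e^{\Phi_1^0+\ov{\Phi_2^0}}\,dx$.

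It then remains to kill the two ``off-diagonal'' products, whose phases are $e^{ix\cdot\xi_\pm}$ (again thanks to $\gamma_{1,3}=0$). Their pieces containing an $r_j$ are $\mathcal{O}(h^{1/3})$ by Cauchy--Schwarz and \eqref{eq_rem_amp}, while the remaining piece is $\int_{\Bm}g_h(x)\,e^{ix\cdot\xi_\pm}\,dx$ with $g_h$ an $L^1$ function built from $e^{\Phi_1}e^{\ov{\Phi_2}}$; replacing $\Phi_j$ by $\Phi_j^0$ up to an $L^1$-error tending to $0$ and applying the Riemann--Lebesgue lemma, together with $|\xi_\pm|\to\infty$ as $h\to0$ (which is where $\gamma_{2,3}\ne0$ is used), shows this piece tends to $0$ as well. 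Collecting everything, the limit of $h$ times \eqref{eq_sec3_4} equals $2i(\gamma_1+i\gamma_2)\cdot\int_B(\tilde A_2-\tilde A_1)e^{ix\cdot\xi}e^{\Phi_1^0+\ov{\Phi_2^0}}\,dx=0$; dividing by $2i$ gives \eqref{eq_identity_main_2}. I expect the most delicate step to be the reassembly of the two diagonal integrals into a single integral over $B$, which forces one to match the reflected coefficient $w^\ast$ against the even/odd parities of the extensions $\tilde A_j$ and to perform the change of variables carefully; the uniform-in-$h$ control of the $r_j$ error terms, though routine, must also be handled.
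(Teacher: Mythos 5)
Your proof is correct and follows essentially the same route as the paper: multiply the integral identity by $h$, insert the CGO solutions, drop the amplitude-gradient and electric-potential contributions via the estimates \eqref{eq_rem_amp}, kill the off-diagonal phases $e^{ix\cdot\xi_\pm}$ with the Riemann--Lebesgue lemma (this is where $\gamma_{2,3}\ne0$ enters), reassemble the two surviving integrals over $\Bm$ into a single integral over $B$ via the reflection $x\mapsto\tilde x$ together with the even/odd parities of the $\tilde A_j$-components, and finally let $\zeta_1\to\gamma_1+i\gamma_2$, $\ov{\zeta_2}\to-(\gamma_1+i\gamma_2)$. The only organizational difference is that you combine $\nabla u_1\ov{u_2}$ and $u_1\nabla\ov{u_2}$ immediately through $w=\zeta_1-\ov{\zeta_2}$ and its reflection $w^\ast$ before the change of variable, whereas the paper treats the two terms separately (equations \eqref{eq_ua3} and \eqref{eq_ua4}) and adds them at the end; your version of the Riemann--Lebesgue step is also slightly more precise in that it explicitly separates the $h$-dependent amplitude into a fixed $L^1$ function plus a vanishing $L^1$ error, a subtlety the paper glosses over.
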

\begin{proof} We will prove the statement by
multiplying the integral equation of Proposition \ref{identity} by $h$,
when $u_1$ and $u_2$ are given by \eqref{eq_u_1-cgo} and \eqref{eq_u_2-cgo}, and
then take the limit as $h \to 0$.

We first show that for the second term in \eqref{eq_sec3_4} we have 
\begin{align}\label{eq_ua1}
h \int_{\Bm} (A_1^2-A_2^2 + q_1-q_2)u_1\ov{u}_2 \to 0,
\end{align}
as $h \to 0$.
Using the phase computations \eqref{eq_phases} we get that
\begin{align*}
u_1 \ov{u}_2 = 
&\quad e^{i\ksi \dotc x} P_1(x) \ov{P}_2(x) 
- e^{ix \cdot \xi_+} P_1(x) \ov{P}_2(\tilde x) \\
&- e^{ix \cdot \xi_-}P_1(\tilde x) \ov{P}_2(x) 
+ e^{i\xi \dotc \tilde x} P_1(\tilde x) \ov{P}_2(\tilde x).
\end{align*}
This is multiplied by an $L^\infty$ function in \eqref{eq_ua1}. Since we
restricted the choice of $\gamma_1$ to make the exponents purely imaginary, we
see easily using the estimates \eqref{eq_rem_amp} that \eqref{eq_ua1} holds.

Equation \eqref{eq_sec3_4} multiplied by $h$, is thus reduced in the limit to
\begin{align}
\label{eq_ua2}
\lim_{h \to 0} \bigg(
h \int_{\Bm} i(A_2-A_1)\dotc\nabla u_1 \ov{u}_2 
- h \int_{\Bm} i(A_2-A_1)\dotc u_1 \nabla \ov{u}_2 \bigg)=0.
\end{align}
We will proceed by examining the first term.
Using \eqref{eq_phases} we  write $\nabla u_1 \ov{u}_2$ as
\begin{align*}
\nabla u_1 \ov{u}_2 = 
&\frac{\zeta_1}{h} \big( 
e^{ix \cdot \xi} P_1(x) \ov{P_2(x)}-  e^{ix \cdot \xi_+} P_1(x) \ov{P_2(\tilde x)} \big)  \\
&+e^{ix \cdot \xi} \nabla P_1(x) \ov{P_2(x)}-  e^{ix \cdot \xi_+} \nabla P_1(x) \ov{P_2(\tilde x)} \\
&-\frac{\tilde \zeta_1}{h} \big( 
e^{ix \cdot \xi_-}  P_1(\tilde x) \ov{P_2(x)} - e^{i\tilde x \cdot \xi} P_1(\tilde x) 
\ov{P_2(\tilde x)} \big)  \\
&-e^{ix \cdot \xi_-} \nabla P_1(\tilde x) \ov{P_2(x)} + e^{i\tilde x \cdot \xi} \nabla P_1(x) 
\ov{P_2(\tilde x)},
\end{align*}
where $\tilde \zeta_j := \zeta_j \cdot (0,0,-1)$, $j=1,2$.
The terms of the product that do not contain the factor $1/h$, result in
integrals similar
to the one in \eqref{eq_ua1}. And one sees similarly using estimates  \eqref{eq_rem_amp} that they are 
zero in the limit of \eqref{eq_ua2}. 
The first term inside the limit in \eqref{eq_ua2} is therefore reduced to 
\begin{align*}
\lim_{h \to 0} 
\int_{\Bm} i(A_2-A_1)\dotc \big( 
&\zeta_1 e^{ix \cdot \xi} P_1(x) \ov{P_2(x)}   - \tilde \zeta_1 e^{ix \cdot \xi_-} P_1(\tilde x) \ov{P_2(x)} \\
-&\zeta_1  e^{ix \cdot \xi_+} P_1(x)\ov{P_2(\tilde x)}  - \tilde \zeta_1 e^{i\tilde x \cdot \xi} P_1(\tilde x) \ov{P_2(\tilde x)}  \big).
\end{align*}
Now we use the Riemann-Lebesgue Lemma to conclude that the terms with exponents
containing  $\xi_+$ and $\xi_-$ are zero in the limit. 
To see this, notice that by Remark \ref{rem_CGO_Lip}, we see
that $\|\Phi_i\|_{L^\infty(\Bm)} < C$, for some $C>0$, when $h$ is small enough. Estimates
\eqref{eq_rem_amp} show that
$\|r_i\|_{L^1(\Bm)} = \mathcal{O}(h^{1/3})$. Hence
$\|P_i\|_{L_1(\Bm)}<C$, for some $C>0$ when $h$ is small enough.
Finally we have $\xi_\pm \to \infty$, as $h\to0$, because of
the restrictions \eqref{eq_gamma-rest}.

The first term in \eqref{eq_ua2} is therefore 
\begin{align*}
\lim_{h \to 0} 
\int_{\Bm} i(A_2-A_1)\dotc \big( 
\zeta_1 e^{ix \cdot \xi} P_1(x) \ov{P_2(x)} 
+ \tilde \zeta_1 e^{i\tilde x \cdot \xi} P_1(\tilde x) \ov{P_2(\tilde x)}  \big)
\end{align*}
as $h\to 0$. The terms containing $r_i$ in the products of $P_1$ and $P_2$
are, because of \eqref{eq_rem_amp}, zero in the limit. 
The above limit is thus equal to
\begin{align*}
\lim_{h \to 0} 
\int_{\Bm} i(A_2-A_1)\dotc \big( 
\zeta_1 e^{ix \cdot \xi} 
e^{\Phi_1(x) + \ov{\Phi_2(x)}}  
+ \tilde \zeta_1 e^{i\tilde x \cdot \xi} 
e^{\Phi_1(\tilde x) + \ov{\Phi_2(\tilde x)}}  \big).
\end{align*}
Finally we split the integral and  do a change of variable in the second term
and  arrive at the expression
\begin{align}
\label{eq_ua3}
\lim_{h \to 0} \quad 
\int_{B} i(\tilde A_2- \tilde A_1)  \cdot \zeta_1 
e^{ix \cdot \xi} 
e^{\Phi_1(x) + \ov{\Phi_2(x)}},  
\end{align}
for the first term of \eqref{eq_ua2}.

Returning to the second term in \eqref{eq_ua2}, containing $u_1 \nabla
\ov{u_2}$. This is of the
same form as the first one. By doing the above derivation by simply exchanging the
roles of  $u_1$ and $\ov{u_2}$, we similarly see that the second term becomes
\begin{align}
\label{eq_ua4}
\lim_{h \to 0} \quad 
-\int_{B} i(\tilde A_2- \tilde A_1)
\cdot \ov{\zeta_2} 
e^{ix \cdot \xi} 
e^{\Phi_1(x) + \ov{\Phi_2(x)}}.  
\end{align}
Now $\zeta_1 \to (\gamma_1 + i \gamma_2)$ and $\ov{\zeta_2} \to - (\gamma_1+i\gamma_2)$,
as $h\to0$.
Thus by using \eqref{eq_ua3} with \eqref{eq_ua4}, we can rewrite \eqref{eq_ua2} as
\begin{align*}
\lim_{h \to 0} &\quad 
\int_{B} i(\tilde A_2-\tilde A_1)  \cdot \big( \zeta_1 
e^{ix \cdot \xi} 
e^{\Phi_1(x) + \ov{\Phi_2(x)}} 
-\ov{\zeta_2} 
e^{ix \cdot \xi} 
e^{\Phi_1(x) + \ov{\Phi_2(x)}} \big) \\
&=
\int_{B} i(\tilde A_2-\tilde A_1)  \cdot (\gamma_1 + i \gamma_2)
e^{ix \cdot \xi} 
e^{\Phi_1^0(x) + \ov{\Phi_2^0(x)}} = 0. 
\end{align*}
\end{proof}

The next Proposition shows that \eqref{eq_identity_main_2} holds even 
when the exponential function depending on $\Phi_i^0$, $i=1,2$ is removed.

\begin{prop}
\label{prop_32}
The equality \eqref{eq_identity_main_2} implies that 
\begin{equation}
\label{eq_identity_main_2_new}
(\gamma_1 + i \gamma_2) \cdot \int_{B} (\tilde A_2-\tilde 
A_1)e^{ix\cdot\xi}dx=0,
\end{equation}
for $\gamma_1,\gamma_2$ and $\xi$ which satisfy \eqref{eq_gamma-assum} and \eqref{eq_gamma-rest}.
\end{prop}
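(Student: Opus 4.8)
The plan is to enlarge the class of complex geometric optics solutions used in Lemma \ref{inter_prop_32} so that they carry an extra holomorphic amplitude, and then to run a one–variable complex analysis argument to remove the factor $e^{\Phi_1^0+\ov{\Phi_2^0}}$.

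First I would rerun the proof of Lemma \ref{inter_prop_32} using the more general complex geometric optics solutions of Remark \ref{rem_g}. For $\tilde u_1$ take the amplitude $g_1 e^{\Phi_1}$ with $(\gamma_1+i\gamma_2)\cdot\nabla g_1=0$, i.e. with $g_1$ holomorphic in $z:=x\cdot\gamma_1+i\,x\cdot\gamma_2$; for $\tilde u_2$ take $g_2 e^{\Phi_2}$ with $(-\gamma_1+i\gamma_2)\cdot\nabla g_2=0$, so that $\ov{g_2}$ is again holomorphic in $z$. Then $g_1\ov{g_2}$ can be an arbitrary entire function $G(z)$, and in particular $G(z)=z^n$, $n\ge 0$, lies in $C^\infty(\ov{B})$ and is admissible; since $g_1,\ov{g_2}$ are bounded and independent of $h$, the estimates \eqref{eq_rem_amp} are unchanged, and the same passage to the limit $h\to0$ as in Lemma \ref{inter_prop_32} yields
\[
(\gamma_1+i\gamma_2)\cdot\int_{\R^3}(\tilde A_2-\tilde A_1)\,G(z)\,e^{ix\cdot\xi}\,e^{\Phi_1^0+\ov{\Phi_2^0}}\,dx=0
\]
for every polynomial $G$ and all $\gamma_1,\gamma_2,\xi$ satisfying \eqref{eq_gamma-assum}, \eqref{eq_gamma-rest} (the integral is over $\R^3$ since $\tilde A_2-\tilde A_1$ is compactly supported). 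Write $\eta:=\gamma_1+i\gamma_2$ and $\Psi:=\Phi_1^0+\ov{\Phi_2^0}$. From \eqref{eq_Phi10} and the corresponding transport equation for $\Phi_2^0$ one computes $\eta\cdot\nabla\Psi=i\,\eta\cdot(\tilde A_2-\tilde A_1)$ on $\R^3$, and $\Psi$, being a sum of Cauchy transforms of the compactly supported $\tilde A_j$, tends to $0$ at infinity.

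Next I would fix $\gamma_1,\gamma_2$ and let $\xi$ run over the whole line $\R(\gamma_1\times\gamma_2)$ (each such $\xi$ is admissible), passing to orthonormal coordinates $y=(y_1,y_2,y_3)$ dual to $(\gamma_1,\gamma_2,\gamma_1\times\gamma_2)$, so that $z=y_1+iy_2$, $x\cdot\xi=|\xi|y_3$ and $\eta\cdot\nabla=2\partial_{\bar z}$. The displayed identity becomes $\int_{\R}F_G(y_3)\,e^{i|\xi|y_3}\,dy_3=0$ for all $|\xi|$, where $F_G(y_3):=\int_{\R^2}\eta\cdot(\tilde A_2-\tilde A_1)(y',y_3)\,G(z)\,e^{\Psi(y',y_3)}\,dy'$ is bounded, continuous and compactly supported in $y_3$; hence $F_G\equiv 0$ by Fourier uniqueness, so for every $y_3$ and every polynomial $G$ one has $\int_{\R^2}\eta\cdot(\tilde A_2-\tilde A_1)(y',y_3)\,G(z)\,e^{\Psi}\,dy'=0$. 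Now fix $y_3$ and set $c:=\tfrac i2\eta\cdot(\tilde A_2-\tilde A_1)(\cdot,y_3)$, which is compactly supported; then $\partial_{\bar z}(e^{\Psi})=c\,e^{\Psi}$, hence $\eta\cdot(\tilde A_2-\tilde A_1)e^{\Psi}=-2i\,\partial_{\bar z}(e^{\Psi})$, and since $G$ is holomorphic $\partial_{\bar z}(e^{\Psi}G)=\partial_{\bar z}(e^{\Psi})\,G$ is compactly supported. Stokes' theorem therefore gives, for all large $R$,
\[
0=\int_{\R^2}\eta\cdot(\tilde A_2-\tilde A_1)\,G\,e^{\Psi}\,dy'=-2i\int_{\R^2}\partial_{\bar z}\bigl(e^{\Psi}G\bigr)\,dy'=-\oint_{|z|=R}e^{\Psi}\,G(z)\,dz .
\]
Taking $G(z)=z^n$, $n=0,1,2,\dots$, shows that all coefficients of $z^{-1},z^{-2},\dots$ in the Laurent expansion at infinity of the bounded function $e^{\Psi}$ (holomorphic for $|z|$ large) vanish; since $e^{\Psi}\to1$ this forces $e^{\Psi}\equiv1$, hence $\Psi\equiv0$, for $|z|$ large. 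But $\Psi(\cdot,y_3)$ is the decaying solution of $\partial_{\bar z}\Psi=c$, i.e. $\Psi(z,y_3)=\tfrac1\pi\int\frac{c(\zeta)}{z-\zeta}\,dm(\zeta)$, so $z\Psi(z,y_3)\to\tfrac1\pi\int c\,dm$ as $|z|\to\infty$; this limit is $0$, i.e. $\int_{\R^2}\eta\cdot(\tilde A_2-\tilde A_1)(y',y_3)\,dy'=0$ for every $y_3$. Integrating this against $e^{i|\xi|y_3}$ gives \eqref{eq_identity_main_2_new}.

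The hard part is this last step: one must exploit the freedom in the holomorphic amplitude $G$ to turn the integral identity into the vanishing of the contour integrals $\oint_{|z|=R}e^{\Psi}z^n\,dz$ — here it is essential that $\partial_{\bar z}(e^{\Psi}G)$ is compactly supported, so that these integrals are independent of $R$, even though $e^{\Psi}G$ itself is not — and then to read off that $e^{\Psi}$ has trivial principal part at infinity and hence that the zeroth moment of $\eta\cdot(\tilde A_2-\tilde A_1)$ vanishes on every slice $\{y_3=\mathrm{const}\}$. One should also check that, for fixed $\gamma_1,\gamma_2$, the admissible triples really do furnish all $\xi$ on the line $\R(\gamma_1\times\gamma_2)$, as needed for the Fourier reduction to a planar problem.
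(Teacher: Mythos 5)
Your proposal is correct, and it reaches \eqref{eq_identity_main_2_new} by a route that is genuinely different from the paper's after the common preliminary steps. Both arguments begin by inserting a holomorphic factor via Remark \ref{rem_g}, pass to coordinates adapted to $(\gamma_1,\gamma_2,\gamma_1\times\gamma_2)$, and invert the Fourier transform in $y_3$ to obtain a per--slice identity in the $z=y_1+iy_2$ plane. The paper then works on the \emph{bounded} slice $T_{y_3}=TB\cap\Pi_{y_3}$: from $\int_{\p T_{y_3}} g e^\Psi\,dz=0$ it builds the Cauchy-type integral $F$, invokes the Plemelj--Sokhotski--Privalov jump relations to identify $F$ as a holomorphic extension of $e^\Psi$ into $T_{y_3}$, uses the argument principle to show $F$ is non-vanishing, passes to $\log F=\Psi$, and then runs Stokes in reverse. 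You instead keep the whole plane $\R^2$ in each slice, using the compact support of $\p_{\bar z}(e^\Psi G)$ to make the contour integrals $\oint_{|z|=R}e^\Psi G\,dz$ independent of $R$, and read off that $e^\Psi$ has trivial principal part at $\infty$. Since $e^\Psi$ is holomorphic there, bounded, and tends to $1$, this forces $e^\Psi\equiv 1$ and hence $\Psi\equiv 0$ for $|z|$ large; the $z^{-1}$ coefficient of the Cauchy transform of $c=\tfrac i2\,\eta\cdot(\tilde A_2-\tilde A_1)$ then gives $\int c\,dm=0$ on every slice, which is exactly \eqref{eq_identity_main_2_new} after a Fourier transform in $y_3$. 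Your route avoids the Plemelj--Sokhotski--Privalov machinery, the argument-principle/winding-number computation, and the choice of a holomorphic logarithm, at the mild cost of appealing explicitly to the Cauchy-transform representation of $\Psi$ and its decay; both pieces are already available (Lemma \ref{cauchyop}, Remark \ref{rem_CGO_Lip}). Two small points worth making explicit when writing this up: for fixed admissible $\gamma_1,\gamma_2$ the constraint \eqref{eq_gamma-rest} involves only $\gamma_1,\gamma_2$, so \emph{every} $\xi\in\R(\gamma_1\times\gamma_2)$, including $\xi=0$, is admissible, which is what licenses the Fourier-uniqueness step; and the validity of \eqref{eq_identity_main_2} with the extra polynomial amplitude $G$ does require re-running the Riemann--Lebesgue estimates of Lemma \ref{inter_prop_32}, which is routine since $G$ is bounded on $\ov B$ and $h$-independent, as noted in Remark \ref{rem_g}.
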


\begin{proof}
By \eqref{eq_Phi10} and \eqref{eq_Phi20} we have that
\begin{equation}
\label{eq_amplitude_sum}
(\gamma_1 + i\gamma_2)\cdot\nabla 
(\Phi_1^0+\overline{\Phi_2^0})=- i(\gamma_1+i\gamma_2)\cdot (\tilde 
A_1-\tilde A_2) \quad \textrm{in}\quad B. 
\end{equation}
Remark \ref{rem_g} furthermore implies that  the amplitude $e^{\Phi_1}$ in 
the definition of $u_1$ can be replaced  by $ge^{\Phi_1}$, 
if $g\in C^\infty(\overline{B})$ is a solution of 
\begin{equation}
\label{eq_g}
(\gamma_1+ i \gamma_2)\cdot \nabla g=0\quad  \textrm{in}\quad B.  
\end{equation}
Let $\Psi(x) :=\Phi_1^0(x)+\overline{\Phi_2^0}(x)$.
Then instead of \eqref{eq_identity_main_2} we can write, 
\[
(\gamma_1+ i\gamma_2)\cdot \int_{B} (\tilde A_2 -\tilde 
A_1)ge^{ix\cdot\xi}e^{\Psi(x)}dx=0.
\]
We conclude from \eqref{eq_amplitude_sum} that
\[
(\gamma_1 + i\gamma_2)\cdot (\tilde A_2 - \tilde A_1)ge^{\Psi} 
=-i (\gamma_1 + i\gamma_2 )\cdot(g\nabla e^{\Psi}),
\]
and therefore, we get
\begin{equation}
\label{eq_identity_main_3}
\int_{B} ge^{ix\cdot \xi}  (\gamma_1 + i\gamma_2)\cdot\nabla 
e^{\Psi} dx=0,
\end{equation}
for all $g$ satisfying \eqref{eq_g}. 

We pick a $\gamma_3$, with $|\gamma_3|=1$, so that we obtain an orthonormal basis
$\{\gamma_1,\gamma_2, \gamma_3\}$. Let $T$ be the coordinate transform into this
basis, i.e.
$y = Tx = (x\cdot\gamma_1, x\cdot\gamma_2, x\cdot\gamma_3)$. 
Set $z=y_1+i y_2$, so that $\p_{\bar z}=(\p_{y_1}+i\p_{y_2})/2$ and 
\[
(\gamma_1 + i\gamma_2)\cdot\nabla =2\p_{\bar z}. 
\]
Rewriting
\eqref{eq_identity_main_3} using this and a change of variable given by $T$ we have
\begin{align*}
\int_{TB} ge^{iy\cdot \xi}  \p_{\ov{z}} e^{\Psi} dy = 0,
\end{align*}
for all $g$ satisfying \eqref{eq_g}. 

Notice that $y\cdot \xi = y_3\xi_3$, since $\xi$ is in the y-coordinates of the
form $(0,0,\xi_3)$.
The above integral is therefore a Fourier transform w.r.t. $\xi_3$.
Let $g\in 
C^\infty(\overline{TB})$ satisfy $\p_{\bar z} g=0$ and 
be independent of $y_3$. Then taking the inverse Fourier transform we write
\begin{align*}
0 &= \int_{T_{y_3}} g \p_{\ov{z}} e^{\Psi} dy_1 dy_2 \\
&= \int_{T_{y_3}}  \p_{\ov{z}} (ge^{\Psi})dy_1 dy_2,
\end{align*}
where $T_{y_3 }:=TB\cap \Pi_{y_3}$ and 
$\Pi_{y_3}=\{(y_1,y_2,y_3):(y_1,y_2)\in\R^2\}$. 
Notice that the boundary of $T_{y_3}$ is piecewise smooth.
Multiplying the above by $2i$ and
using Stokes' theorem we get that
\begin{align}
0 &= 
2i \int_{T_{y_3 }} \p_{\ov{z}} (ge^{\Psi})dy_1 dy_2\nonumber  \\
&= \int_{T_{y_3 }}  \Curl (ge^{\Psi}, i ge^{\Psi},0)dy_1 dy_2\nonumber  \\
&= \int_{\p T_{y_3 }} (ge^{\Psi}, i ge^{\Psi},0) \cdot dl \nonumber \\
&= \int_{\p T_{y_3}} ge^{\Psi}dz, \label{eq_orth_hol_g}
\end{align}
for all holomorphic functions $g\in C^\infty(\overline{T_{y_3}})$.

Next we shall show that \eqref{eq_orth_hol_g} implies that there exists a 
nowhere vanishing holomorphic function $F\in C(\overline{T_{y_3}})$ such that
\begin{equation}
\label{eq_log}
F|_{\p T_{y_3}}=e^{\Psi}|_{\p T_{y_3}}. 
\end{equation}
 
To this end, we define $F$ to be   
\begin{align*}
F(z)=\frac{1}{2\pi i}\int_{\p T_{y_3}} 
\frac{e^{\Psi(\zeta)}}{\zeta-z}d\zeta,  \quad z\in\C\setminus\p T_{y_3}. 
\end{align*}
The function $F$ is holomorphic away from $\p T_{y_3}$.  As $e^{\Psi}$ is Lipschitz, 
we know because of the Plemelj-Sokhotski-Privalov formula (see e.g. \cite{Kress1}), that  
\begin{equation}
\label{eq_PSP_formula}
\lim_{z\to z_0,z\in T_{y_3}} F(z)-\lim_{z\to z_0,z\notin 
T_{y_3}}F(z)=e^{\Psi(z_0)},\quad z_0\in \p 
T_{y_3}.
\end{equation} 
Now the function $\zeta\mapsto (\zeta-z)^{-1}$ is holomorphic on $T_{y_3}$ when 
$z\notin T_{y_3}$. By choosing $g(z)=\zeta\mapsto (\zeta-z)^{-1}$ in \eqref{eq_orth_hol_g},
get therefore that $F(z)=0$, 
when $z\notin T_{y_3}$.  Hence, the second limit in 
\eqref{eq_PSP_formula} vanishes, and therefore, $F$ is holomorphic function on 
$T_{y_3}$, such that \eqref{eq_log} holds. 

Next we show that $F$ is non-vanishing in $T_{y_3}$.  When doing so, let  
$\p T_{y_3}$ be parametrized by $z=\gamma(t)$, and $N$ be the number of zeros 
of $F$ in $T_{y_3}$. Then by the argument principle, we get
\[
N=\frac{1}{2\pi i}\int_{\gamma}\frac{F'(z)}{F(z)}dz=\frac{1}{2\pi 
i}\int_{F \circ \gamma}\frac{1}{\zeta}d\zeta=\frac{1}{2\pi 
i}\int_{e^{\Psi \circ \gamma}} \frac{1}{\zeta}d\zeta= 0.
\]
To see that the  last integral is zero, notice that this the winding number of the 
path $e^{\Psi \circ \gamma}$. 
And that $e^{\Psi(\gamma(t))}$ is homotopic to 
the constant contour $\{1\}$, with the homotopy given by 
$e^{s\Psi(\gamma(t))}$, $s\in [0,1]$. 

Next, since $F$ is a non-vanishing holomorphic function on $T_{y_3}$ and 
$T_{y_3}$ is simply connected, it admits a holomorphic logarithm. Hence,  
\eqref{eq_log} implies that 
\[
(\log F)|_{\p T_{y_3}}=\Psi|_{\p T_{y_3}}.
\]
Because $\log F=\Psi$ is continuous on $\p T_{y_3}$, we have by the Cauchy theorem, 
\[
\int_{\p T_{y_3}} g \Psi dz=\int_{\p T_{y_3}} g 
\log F dz=0,
\]
where $g\in C^\infty(\overline{T_{y_3}})$ is  an arbitrary function such that 
$\p_{\bar z}g=0$.  
Using Stokes' formula as in \eqref{eq_orth_hol_g} allows us to write this as 
\[
\int_{T_{y_3}} g\p_{\bar z}\Psi dy_1 dy_2=0. 
\]
Taking the Fourier transform with respect to $y_3$, we get
\[
\int_{T(B)} e^{iy\cdot \xi} g\p_{\bar z}\Psi dy=0,
\]
for all $\xi=(0,0,\xi_3)$, $\xi_3\in\R$.   Hence, returning back to the 
$x$ variable, we obtain that 
\[
(\gamma_1 +i\gamma_2)\cdot \int_{B} e^{i x\cdot  \xi}g(x) \nabla \Psi(x) dx=0,
\]
where $g\in C^\infty(\overline{B})$ is such that $(\gamma_1 + 
i\gamma_2)\cdot \nabla g=0$ in $B$. 

Using \eqref{eq_amplitude_sum},  we finally get
\begin{equation}
\label{eq_identity_main_2_new_zero}
(\gamma_1 + i\gamma_2)\cdot \int_{B} (\tilde A_2-\tilde 
A_1)g(x)e^{ix\cdot\xi}dx=0. 
\end{equation}
Setting $g=1$, we obtain \eqref{eq_identity_main_2_new}.  

\end{proof}

By replacing the vector $\gamma_2$ by $-\gamma_2$ in 
\eqref{eq_identity_main_2_new}, we see that 
\begin{equation}
\label{eq_plane_2}
(\gamma_1 -i\gamma_2)\cdot \int_{B} (\tilde A_2 -\tilde 
A_1)e^{ix\cdot\xi}dx=0. 
\end{equation}
Hence,  \eqref{eq_identity_main_2_new} and \eqref{eq_plane_2} imply that
\begin{equation}
\label{eq_plane_3}
\gamma\cdot \int_{B} (\tilde A_2 -\tilde A_1)e^{ix\cdot\xi}dx=0,
\end{equation}
for all $\gamma\in \textrm{span}\{\gamma_1,\gamma_2\}$ and all $\xi\in \R^{3}$ 
such that 
\eqref{eq_gamma-assum} and  \eqref{eq_gamma-rest} hold.

In the proof of the next Proposition  we see that
\eqref{eq_identity_main_2_new} is actually a condition for having 
$\Curl (\tilde A_1 - \tilde A_2) = 0$. This is therefore the 
last step in proving that the DN-map determines the curl of the magnetic
potential. 

\begin{prop}
\label{prop_curl_thm_2}
Assume that $A_j,q_j$ and $\Gamma_j$, $j=1,2$ 
are as in Theorem \ref{thm_2_inverse} and that the DN-maps satisfy
\begin{align*}
\Lambda_{A_1,q_1}(f)|_{\Gamma_1}=\Lambda_{A_2,q_2}(f)|_{\Gamma_1},
\end{align*}
for any $f\in H^{3/2}_{\emph{\textrm{comp}}}(\p \HS)$, $\supp(f)\subset \Gamma_2$.
Then 
\begin{equation}
\label{eq_curl_2}
\nabla \times \tilde A_1= \nabla \times \tilde A_2\quad\textrm{in}\quad 
B. 
\end{equation}
\end{prop}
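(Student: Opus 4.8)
The plan is to reinterpret \eqref{eq_plane_3} as a pointwise constraint on the Fourier transform of the difference $A:=\tilde A_2-\tilde A_1$, and then to deduce that the Fourier transform of $\nabla\times A$ vanishes identically.

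First I would extend $A$ by zero to all of $\R^3$. Since $\tilde A_1,\tilde A_2$ are the (even/odd) reflections of the compactly supported potentials $A_1,A_2$, the field $A$ is compactly supported and lies in $W^{1,\infty}(\R^3,\R^3)$, so $\hat A(\xi):=\int_{\R^3}A(x)e^{ix\cdot\xi}\,dx$ is a well-defined, continuous (in fact entire) function of $\xi\in\R^3$. With this notation, \eqref{eq_plane_3} states exactly that $\gamma\cdot\hat A(\xi)=0$ for every $\gamma\in\operatorname{span}\{\gamma_1,\gamma_2\}$, whenever $\xi,\gamma_1,\gamma_2$ satisfy \eqref{eq_gamma-assum} and \eqref{eq_gamma-rest}.

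Next I would verify that these admissible triples reach enough directions. Given any $\xi=(\xi_1,\xi_2,\xi_3)$ with $(\xi_1,\xi_2)\neq(0,0)$, put $\gamma_1:=(\xi_2,-\xi_1,0)/\sqrt{\xi_1^2+\xi_2^2}$ and $\gamma_2:=(\xi\times\gamma_1)/|\xi\times\gamma_1|$; a short computation gives $\gamma_{1,3}=0$ and $\gamma_{2,3}=-\sqrt{\xi_1^2+\xi_2^2}\,/\,|\xi|\neq0$, so \eqref{eq_gamma-assum} and \eqref{eq_gamma-rest} hold. Since $\gamma_1,\gamma_2$ are orthonormal and both orthogonal to $\xi$, they span the plane $\xi^{\perp}$, so \eqref{eq_plane_3} forces the component of $\hat A(\xi)$ transverse to $\xi$ to vanish, i.e. $\hat A(\xi)$ is a scalar multiple of $\xi$, for every $\xi$ off the line $\{\xi_1=\xi_2=0\}$. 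Because $\hat A$ is continuous and this line has dense complement in $\R^3$, we conclude $\xi\times\hat A(\xi)=0$ for all $\xi\in\R^3$ (it is trivial at $\xi=0$).

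Finally, since $\widehat{\nabla\times A}(\xi)=-i\,\xi\times\hat A(\xi)$, this last identity gives $\widehat{\nabla\times A}\equiv0$, hence $\nabla\times A\equiv0$ in $\R^3$; in particular $\nabla\times\tilde A_1=\nabla\times\tilde A_2$ in $B$, which is \eqref{eq_curl_2}. The one point that deserves genuine attention—the ``main obstacle'', modest though it is—is coupling the elementary linear-algebra step (checking that the two admissible directions $\gamma_1,\gamma_2$ actually span all of $\xi^{\perp}$, so that \eqref{eq_plane_3} controls every component of $\hat A(\xi)$ transverse to $\xi$ and not just a one-dimensional slice) with the continuity-and-density argument needed to upgrade the conclusion from the dense set of admissible $\xi$ to all of $\R^3$. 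The rest is routine bookkeeping with the Fourier transform.
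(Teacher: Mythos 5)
Your proposal is correct and follows essentially the same route as the paper: for each $\xi$ off the $x_3$-axis you choose the same pair $\gamma_1\in\{x_3=0\}$, $\gamma_2=\xi\times\gamma_1/|\xi\times\gamma_1|$ spanning $\xi^\perp$, use \eqref{eq_plane_3} to conclude the transverse part of $\hat A(\xi)$ vanishes (the paper phrases this as a decomposition $v=v_\xi+v_\perp$ with $v_\perp=0$), then upgrade $\xi\times\hat A(\xi)=0$ from $\R^3\setminus L$ to all of $\R^3$ and invert. The only cosmetic difference is that you invoke continuity of $\hat A$ plus density of $\R^3\setminus L$ where the paper cites analyticity of the Fourier transform of a compactly supported function; both are valid since the exceptional set is a line.
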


\begin{proof}

Assume that  $\xi\in\R^3$ is not on the line $L:=(0,0,t)$, $t \in \R$. Then the 
vectors $\gamma_1$ and $\gamma_2$ given by
\begin{alignat}{2}
\label{eq_vectors_special}
\tilde \gamma_1 &:= (-\xi_2, \xi_1,0) , \quad
&\gamma_1:=\tilde \gamma_1/|\tilde \gamma_1|, \nonumber \\
\tilde \gamma_2 &:= \xi \times \gamma_1, \quad
&\gamma_2:=\tilde \gamma_2/|\tilde \gamma_2|,
\end{alignat}
where $\xi \times \gamma_1$ stands for the vector cross product,
satisfy \eqref{eq_gamma-rest} and \eqref{eq_gamma-assum}.
Thus, for any vector $\xi\in\R^3 \setminus L$,
\eqref{eq_plane_3} says that
\begin{equation}
\label{eq_plane_4}
\gamma \cdot  v(\xi)=0, \quad v(\xi):=\widehat{\tilde 
A_2\chi}(\xi)-\widehat{\tilde A_1\chi}(\xi),
\end{equation}
for all $\gamma\in \textrm{span}\{\gamma_1,\gamma_2\}$. Here $\chi$ is the 
characteristic function of the set $B$.  
For any vector $\xi\in \R^3$, we have the following decomposition,
\[
v(\xi)=v_{\xi}(\xi)+v_\perp(\xi),
\] 
where $\Re v_{\xi}(\xi)$, $\Im v_{\xi}(\xi)$ are multiples of $\xi$, and  $\Re 
v_\perp(\xi)$, $\Im v_\perp(\xi)$ are orthogonal to $\xi$.  Now we have $\Re 
v_\perp(\xi), \Im v_\perp(\xi)\in \textrm{span}\{\gamma_1,\gamma_2\}$, and 
therefore, it follows from \eqref{eq_plane_4} 
that $v_\perp(\xi)=0$, 
for all $\xi\in \R^3\setminus L$. 

Hence, 
$v(\xi)=\alpha(\xi)\xi$,
so that that 
\[
\xi\times v(\xi)=0,
\]
for all $\xi\in \R^3\setminus L$, and thus, everywhere, 
by the analyticity of the Fourier transform.  Taking the inverse Fourier 
transform, we obtain \eqref{eq_curl_2}.
\end{proof}

\subsection{Determining the electric potential}

In order to complete the proof  of Theorem \ref{thm_2_inverse}, we need 
to show that the electric potential is also determined by the
DN-map. Again we assume that 
$A_j,q_j$ and $\Gamma_j$, $j=1,2$ 
are as in Theorem \ref{thm_2_inverse} and that the DN-maps satisfy
\eqref{eq_data_inv}, and hence  \eqref{eq_data_inv_1}.

Since $B$ is simply connected,
it follows from the Helmholtz decomposition of $\tilde A_1-\tilde A_2$  and \eqref{eq_curl_2} that 
there exists $\psi\in C^{1,1}(\ov{B})$ with $\psi=0$ near $\p B$ 
such that 
\[
\tilde A_1=\tilde A_2+\nabla \psi\quad\textrm{in}\quad B. 
\]
We extend $\psi$ to a function of class $C^{1,1}$ on all of $\R^3$ such that 
$\psi=0$ on $\R^3\setminus\ov{B}$. Then 
\[
\tilde A_1=\tilde A_2+\nabla \psi\quad\textrm{in}\quad \R^3. 
\]
In particular, $\psi=0$ on $\tilde \Gamma_1\cup\tilde \Gamma_2$. 
It follows then
from Lemma \ref{gauge_inv} part (\textit{i}) and \eqref{eq_data_inv_1}
that for all $f$ with 
$\supp(f)\subset\tilde \Gamma_2$,
\[
\Lambda_{A_1,q_1}(f)|_{\tilde \Gamma_1}=
\Lambda_{A_2,q_2}(f)|_{\tilde \Gamma_1}=\Lambda_{A_2+\nabla 
\psi,q_2}(f)|_{\tilde \Gamma_1}=\Lambda_{A_1,q_2}(f)|_{\tilde \Gamma_1}.
\]
We can now, by Remark \ref{rem_ie} use this with Proposition \ref{identity}.
That is we consider equation \eqref{eq_sec3_4}, in the case $A_1=A_2$. This gives
\begin{equation}
\label{eq_recovering_q}
\int_{\Bm}(q_1-q_2)u_1\ov{u_2}dx=0,
\end{equation}
for all $u_1\in W_1(\Bm)$ and $u_2\in W_2^*(\Bm)$. 

Choosing in \eqref{eq_recovering_q} $u_1$ and $u_2$ as the complex geometric 
optics solutions, given by \eqref{eq_u_1-cgo} and \eqref{eq_u_2-cgo}, and 
letting $h\to 0$, we have
\begin{equation}
\label{eq_recovering_q_2}
\int_{B}(\tilde q_1-\tilde 
q_2)e^{ix\cdot\xi}e^{\Phi_1^0(x)+\overline{\Phi_2^0 (x)}}dx=0.
\end{equation}
By Remark \ref{rem_g}
$e^{\Phi_1}$ in the definition \eqref{eq_u_1-cgo} of $u_1$ can be replaced  by 
$ge^{\Phi_1}$ if $g\in C^\infty(\overline{B})$ is a solution of 
\[
(\gamma_1+ i \gamma_2)\cdot \nabla g=0\quad  \textrm{in}\quad B.  
\]
Then \eqref{eq_recovering_q_2} can be replaced by 
\[
\int_{B}(\tilde q_1-\tilde 
q_2)g(x)e^{ix\cdot\xi}e^{\Phi_1^0(x)+\overline{\Phi_2^0 (x)}}dx=0.
\]
Now \eqref{eq_amplitude_sum} has the form,
\[
(\gamma_1 + i\gamma_2)\cdot\nabla (\Phi_1^0+\overline{\Phi_2^0})=0\quad 
\textrm{in}\quad B, 
\]
since we consider that $\tilde A_1 = \tilde A_2$.
Thus, we can take $g=e^{-(\Phi_1^0+\overline{\Phi_2^0)}}$ and  obtain that 
\begin{equation}
\label{eq_recovering_q_3}
\int_{B}(\tilde q_1-\tilde q_2)e^{ix\cdot\xi}dx=0,
\end{equation}
for all $\xi\in \R^3$ such that there exist $\gamma_1,\gamma_2\in \R^3$, 
satisfying \eqref{eq_gamma-assum} and  \eqref{eq_gamma-rest}.
Since for any $\xi\in\R^3$ not of the form $\xi=(0,0,\xi_3)$, the vectors, 
given by \eqref{eq_vectors_special}, satisfy \eqref{eq_gamma-assum} and  \eqref{eq_gamma-rest},
we conclude 
that \eqref{eq_recovering_q_3} holds for all  $\xi\in\R^3$ 
except those of the form $\xi=(0,0,\xi_3)$, and therefore, by analyticity of the Fourier 
transform, for all $\xi\in \R^3$. Hence,  $q_1=q_2$ in $\Bm$. 
This completes the proof of Theorem \ref{thm_2_inverse}.

\newpage
\section{Appendix}

\subsection{Magnetic Green's formulas}
\label{sec:EGF}

Let us first recall, following \cite{DosSantos1},  the standard Green formula 
applied to the magnetic Schr\"odinger operator.  

\begin{lem} \label{MagGFI} 
Suppose that $\Omega \subset \R^3$ is open and bounded, with piecewise $C^{\,1}$
boundary. Let $A \in W^{1,\infty}(\Omega,\R^3)$ and  $q \in 
L^{\infty}(\Omega)$. Then we have, 
\begin{align*}
& \quad (\Ld u, v)_{L^2(\Omega)} - (u , L_{A,\ov{q}} v)_{L^2(\Omega)} \\
&= (u , (\partial_n + iA\cdot n)
v)_{L^2(\partial \Omega)}-  ((\partial_n + iA\cdot n) u, v)_{L^2(\partial 
\Omega)},
\end{align*}
for all $u,v \in H^1(\Omega)$, with $\Delta u, \Delta v \in L^2(\Omega)$,
where $n$ is the exterior unit normal to $\p \Omega$.
\end{lem}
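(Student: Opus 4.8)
The plan is to reduce the identity to the classical Green formulas by expanding the magnetic operator into its constituent terms. Since $A$ is real-valued, one has
\[
L_{A,q} = -\Delta - 2iA\cdot\nabla - i(\nabla\cdot A) + |A|^2 + q,
\]
and $L_{A,\ov q}$ is given by the same expression with $q$ replaced by $\ov q$. In the difference $(L_{A,q}u,v)_{L^2(\Omega)} - (u,L_{A,\ov q}v)_{L^2(\Omega)}$ the zeroth-order contributions cancel: the function $|A|^2$ is real, and $(qu,v)_{L^2(\Omega)} = \int_\Omega qu\ov v\,dx = (u,\ov q v)_{L^2(\Omega)}$. A direct computation, using that $A\cdot\nabla(u\ov v) + (\nabla\cdot A)u\ov v = \nabla\cdot(u\ov v\,A)$, then shows
\[
(L_{A,q}u,v)_{L^2(\Omega)} - (u,L_{A,\ov q}v)_{L^2(\Omega)} = \int_\Omega\big(u\Delta\ov v - (\Delta u)\ov v\big)\,dx - 2i\int_\Omega\nabla\cdot\big(u\ov v\,A\big)\,dx .
\]

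Next I would apply two standard results. For $u,v\in H^1(\Omega)$ with $\Delta u,\Delta v\in L^2(\Omega)$, Green's second identity gives
\[
\int_\Omega\big(u\Delta\ov v - (\Delta u)\ov v\big)\,dx = \int_{\partial\Omega}\big(u\,\partial_n\ov v - (\partial_n u)\ov v\big)\,dS ,
\]
the boundary terms being interpreted, if needed, through the $H^{1/2}(\partial\Omega)$--$H^{-1/2}(\partial\Omega)$ duality. Since $A\in W^{1,\infty}(\Omega,\R^3)$ and $u\ov v\in W^{1,1}(\Omega)$, the divergence theorem yields
\[
\int_\Omega\nabla\cdot\big(u\ov v\,A\big)\,dx = \int_{\partial\Omega}(A\cdot n)u\ov v\,dS .
\]
Combining the last three displays expresses the left-hand side of the lemma as $\int_{\partial\Omega}\big(u\,\partial_n\ov v - (\partial_n u)\ov v\big)\,dS - 2i\int_{\partial\Omega}(A\cdot n)u\ov v\,dS$.

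Finally I would check that this equals the claimed right-hand side. Using that $A\cdot n$ is real, $\overline{(\partial_n + iA\cdot n)v} = \partial_n\ov v - i(A\cdot n)\ov v$, so that
\[
(u,(\partial_n+iA\cdot n)v)_{L^2(\partial\Omega)} - ((\partial_n+iA\cdot n)u,v)_{L^2(\partial\Omega)} = \int_{\partial\Omega}\big(u\,\partial_n\ov v - (\partial_n u)\ov v\big)\,dS - 2i\int_{\partial\Omega}(A\cdot n)u\ov v\,dS ,
\]
which matches. The main obstacle is not the algebra but the rigor of the two integration-by-parts steps at the stated regularity: justifying Green's second identity on the space $\{w\in H^1(\Omega):\Delta w\in L^2(\Omega)\}$ and the divergence theorem with a merely Lipschitz vector field on a domain with only piecewise $C^1$ boundary. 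I would handle this by density, approximating $u,v$ in the graph norm of $\Delta$ by functions in $C^\infty(\ov\Omega)$ and $A$ by smooth vector fields, applying the classical formulas, and passing to the limit using continuity of the relevant trace maps; alternatively one may simply invoke the corresponding standard statements (cf. \cite{Grubb1}).
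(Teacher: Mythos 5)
The paper does not actually prove Lemma \ref{MagGFI}; it is stated as a recalled fact with a reference to Dos Santos Ferreira et al. Your proof is correct and is the natural argument one would find in that reference: expand $L_{A,q}$ into $-\Delta-2iA\cdot\nabla-i(\nabla\cdot A)+|A|^2+q$, observe the zeroth-order terms drop out of $(L_{A,q}u,v)-(u,L_{A,\ov q}v)$, collect the first-order $A$-terms into $-2i\nabla\cdot(u\ov v A)$, and apply Green's second identity together with the divergence theorem; the algebra against $\overline{(\partial_n+iA\cdot n)v}=\partial_n\ov v-i(A\cdot n)\ov v$ then closes the identity. Your remarks on rigor are also appropriate: at the stated regularity the boundary products must be read as $H^{1/2}$--$H^{-1/2}$ pairings, and the density/approximation step (or a direct appeal to a textbook statement) is exactly what is needed to justify Green's identity on $\{w\in H^1:\Delta w\in L^2\}$ and the divergence theorem with only a Lipschitz $A$ and a piecewise $C^1$ boundary.
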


We shall also need a version of the above result where $\Omega$ 
is replaced by $\HS$. We shall then need to put some restrictions on $v$ and 
$u$, 
because $\HS$ is unbounded. To this end we assume that $u$ and $v$ are 
solutions to
the Helmholtz equation outside some compact set, 
that obey some form of radiation condition. To be 
precise, let $A \in \Wc^{1,\infty}(\HS,\R^3)$,  $q \in
\Lc^{\infty}(\HS)$, and let 
$u\in H^2_{\textrm{loc}}(\ov{\HS})$ be such that 
\[
(L_{A,q}-k^2) u=0\quad \textrm{in}\quad \HS,
\]
$\supp(u|_{\p\HS})$ is compact, and $u$ is outgoing.  Assume also that $v\in 
H^2_{\textrm{loc}}(\ov{\HS})$ satisfies 
\[
(L_{A,\ov{q}}-k^2)v\in L^2_{\textrm{comp}}(\CHS), 
\]
$\supp(v|_{\p \HS})$ is compact, and $v$ is incoming.

\begin{lem} \label{MagGFII} 
With $u$ and $v$ as above, we have 
\begin{equation}
\label{eq:MagGFIIeq}
\begin{aligned}
& \quad ((\Ld-k^2) u, v)_{L^2(\HS)} - (u , (L_{A,\ov{q}}-k^2) 
v)_{L^2(\HS)} \\
&= (u , (\partial_n + iA\cdot n)
v)_{L^2(\p\HS)}-  ((\partial_n + iA\cdot n) u, v)_{L^2(\p\HS)}.
\end{aligned}
\end{equation}
\end{lem}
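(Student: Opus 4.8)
The plan is to reduce the half-space identity \eqref{eq:MagGFIIeq} to the bounded-domain formula of Lemma \ref{MagGFI} by exhausting $\HS$ with half-balls and showing that the boundary contributions coming from the large hemisphere vanish in the limit. Concretely, for $r>0$ large set $D_r := \HS \cap B_r$, whose boundary consists of the flat piece $l_r := \p\HS \cap B_r \subset \p\HS$ and the curved piece $S_r := \HS \cap \p B_r$. For $r$ large enough, all of $\supp(A)$, $\supp(q)$, $\supp(u|_{\p\HS})$, $\supp(v|_{\p\HS})$ and the compact set outside which $(L_{A,\bar q}-k^2)v$ vanishes are contained in $B_r$, so in particular $A\equiv 0$ on $S_r$, and both $u$ and $v$ solve the Helmholtz equation $(-\Delta-k^2)\cdot = 0$ on $\HS\setminus\ov{B_r}$.

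First I would apply Lemma \ref{MagGFI} with $\Omega = D_r$ (its boundary is piecewise $C^1$, and $u,v\in H^2_{\mathrm{loc}}(\ov\HS)$ restrict to $H^1(D_r)$ with $\Delta u,\Delta v\in L^2(D_r)$; note $(L_{A,q}-k^2)u=0$ and $(L_{A,\bar q}-k^2)v\in L^2_{\mathrm{comp}}$, so the needed hypotheses hold). This gives
\begin{align*}
&((\Ld-k^2)u,v)_{L^2(D_r)} - (u,(L_{A,\bar q}-k^2)v)_{L^2(D_r)} \\
&\quad = (u,(\p_n+iA\cdot n)v)_{L^2(l_r)} - ((\p_n+iA\cdot n)u,v)_{L^2(l_r)} \\
&\qquad + (u,\p_n v)_{L^2(S_r)} - (\p_n u,v)_{L^2(S_r)},
\end{align*}
using that $A\cdot n=0$ on $S_r$. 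As $r\to\infty$ the left side converges to the left side of \eqref{eq:MagGFIIeq}, since both integrands are supported in a fixed compact set (the first because $(L_{A,q}-k^2)u=0$ identically, the second because $(L_{A,\bar q}-k^2)v$ has compact support), and the boundary integrals over $l_r$ stabilize to those over $\p\HS$ for $r$ large because the relevant boundary data are compactly supported. So it remains to show that the hemisphere term $I_r := (u,\p_n v)_{L^2(S_r)} - (\p_n u,v)_{L^2(S_r)} \to 0$.

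For the hemisphere term I would use that $u$ is outgoing and $v$ is incoming. Write $I_r$ by inserting $\mp ik$: since $\bar v$ is an outgoing solution of the Helmholtz equation (the complex conjugate of an incoming solution is outgoing), one has $\p_n v + ik v = o(r^{-1})$ pointwise on $S_r$ while $\p_n u - ik u = o(r^{-1})$. Then
\[
I_r = (u,(\p_n+ik)v)_{L^2(S_r)} - ((\p_n-ik)u,v)_{L^2(S_r)},
\]
because the two $\pm ik(u,v)_{L^2(S_r)}$ cross terms cancel. Each remaining term is estimated by Cauchy--Schwarz exactly as in the proof of Lemma \ref{repr1}: Lemma \ref{SRChlp} (applicable to both outgoing and incoming solutions) gives $\|u\|_{L^2(S_r)}=O(1)$ and $\|v\|_{L^2(S_r)}=O(1)$, while the radiation conditions give $\|(\p_n+ik)v\|_{L^2(S_r)}=o(1)$ and $\|(\p_n-ik)u\|_{L^2(S_r)}=o(1)$ (the extra factor $r$ in the half-space hemisphere has area $O(r^2)$, matching the $o(r^{-1})$ pointwise decay to give $o(r^{-2})\cdot O(r^2)=o(1)$ in $L^2$ after squaring). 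Hence $I_r\to 0$, and taking $r\to\infty$ yields \eqref{eq:MagGFIIeq}.

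The main obstacle is the hemisphere estimate: one must be careful that $S_r$ is only a \emph{half} sphere rather than a full sphere, so Lemma \ref{SRChlp} and the argument of Lemma \ref{repr1} are applied to $u$ and $v$ as solutions on the exterior region $\R^3\setminus\ov{B_r}$ (where they are genuinely defined, being in $H^2_{\mathrm{loc}}(\ov\HS)$ and solving Helmholtz there) and then the integral over $S_r\subset\p B_r$ is dominated by the integral over the whole of $\p B_r$; together with the cancellation of the $ik$ cross terms this is exactly what forces $I_r\to 0$. The rest is bookkeeping about compact supports ensuring the $l_r$ and volume integrals converge to their $\p\HS$ and $\HS$ counterparts.
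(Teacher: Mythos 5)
Your proof is correct and follows essentially the same route as the paper's: exhaust $\HS$ by half-balls $\HS\cap B_r$, invoke Lemma \ref{MagGFI} on the truncated domain, rewrite the hemisphere term by inserting $\pm ik$ so the radiation conditions on the outgoing $u$ and incoming $v$ can be used, and then kill it via Cauchy--Schwarz together with Lemma \ref{SRChlp}. The only cosmetic difference is that you spell out the cancellation of the $\pm ik(u,v)$ cross terms and the bookkeeping of which supports force the flat-boundary and volume integrals to stabilize, which the paper leaves implicit.
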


\begin{proof}
Let $B_R := \{x\in \R^3 \,\big|\, |x|<R\}$ be an open ball in $\R^3$ of radius $R$,  and 
choose $R>0$ large enough so that 
\[
\supp(A),\supp(q)\subset B_R. 
\]
Set $\Omega = \HS \cap B_R$.  By Lemma \ref{MagGFI},  we know that
\begin{align*}
& \quad ((\Ld-k^2) u, v)_{L^2(\Omega)} - (u , (L_{A,\ov{q}}-k^2) 
v)_{L^2(\Omega)} \\
&= (u , (\partial_n + iA\cdot n)
v)_{L^2(\partial \Omega)}-  ((\partial_n + iA\cdot n) u, v)_{L^2(\partial 
\Omega)}. 
\end{align*}
Thus, to obtain \eqref{eq:MagGFIIeq} we need to show that 
\begin{equation}
\label{eq_green_pr1}
\int_{\p B_R\cap \HS} (u  \ov{\partial_n  v}  - (\partial_n u)\ov{v} ) d S_R 
\to 0, \quad R\to \infty.
\end{equation}
Let us rewrite the left hand side of the above as follows, 
\begin{align*}
 \int_{\p B_R\cap \HS}(\partial_n \ov{v} -ik\ov{v})udS_R - \int_{\p B_R\cap 
\HS}(\partial_n u -iku)\ov{v}dS_R. 
\end{align*}
We show that first term goes to zero as $R \to \infty$. The second term can be
handled in the same way. Applying Cauchy-Schwarz gives
\begin{align*}
\bigg|\int_{\p B_R\cap \HS}(\partial_n \ov{v} -ik\ov{v})u dS_R\bigg|^2
\leq \int_{\p B_R\cap \HS} |\partial_n \ov{v} -ik\ov{v}|^2dS_R \int_{\p B_R\cap 
\HS} | u |^2 dS_R.
\end{align*}
Here the first integral goes to zero, since 
$\ov{\partial_n \ov{v} -ik\ov{v}} = \partial_n v +ikv$ and $|\partial_n v
+ikv|^2$ is $o(1/r^2)$ as $r=|x|\to \infty$, since $v$ is incoming. The second 
integral is bounded, because of Lemma
\ref{SRChlp}. We conclude that \eqref{eq_green_pr1} holds.  
\end{proof}

\subsection{Carleman estimates and solvability}
\label{sec:CE}

Let $\Omega\subset\R^n$, $n\ge 2$, be a bounded domain with $C^\infty$ 
boundary, and let $\varphi(x) = \alpha \cdot x$  with $\alpha \in \R^n$, 
$|\alpha|=1$. Consider the conjugated operator
\begin{align*}
L_\varphi :=  e^{\varphi / h} h^2 \Ld e^{-\varphi/h}.
\end{align*}
Note that $L_\varphi$ depends on $A,q$ and $h$. In the beginning  of this 
subsection we shall establish the following Carleman estimate, where we write
\begin{align*}
\| u \|_{H_{\textrm{scl}}^1(\Omega)}^2 = \|u\|_{L^2(\Omega)}^2 + \|h \nabla u
\|^2_{L^2(\Omega)}.
\end{align*}

\begin{thm}
\label{carlI} 
Let $A \in W^{1,\infty}(\Omega,\C^n)$ and $q \in
L^{\infty}(\Omega,\C)$. Then there exist   $C > 0$ and   $h_0 > 0 $ such that 
for all
$u \in C^{\infty}_0(\Omega)$, we have 
\begin{align}
h \| u \|_{H_{\emph{scl}}^1(\Omega)} \leq 
C \| L_{\varphi} u \|_{L^2(\Omega)},
\end{align}
when $0 < h \leq h_0$.
\end{thm}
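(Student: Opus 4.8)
The plan is to first strip off the magnetic and electric contributions and reduce to the Carleman estimate for the conjugated Laplacian, and then to establish that estimate by hand. Writing $L_{A,q} = -\Delta - 2iA\cdot\nabla - i(\nabla\cdot A) + A^2 + q$ and conjugating by $e^{\varphi/h}$ (which turns $h\nabla$ into $h\nabla - \nabla\varphi = h\nabla - \alpha$), one gets, using $|\alpha|=1$,
\[
L_\varphi = L_\varphi^{(0)} + \mathcal R,\qquad L_\varphi^{(0)} := e^{\varphi/h}(-h^2\Delta)e^{-\varphi/h} = -h^2\Delta + 2\alpha\cdot h\nabla - 1,
\]
with $\mathcal R := -2ihA\cdot h\nabla + 2ih\,(A\cdot\alpha) + h^2(-i\nabla\cdot A + A^2 + q)$. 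Since $A\in W^{1,\infty}(\Omega,\C^n)$ and $q\in L^\infty(\Omega,\C)$, the lower-order remainder satisfies $\|\mathcal R u\|_{L^2(\Omega)}\le C h\,\|u\|_{H^1_{\mathrm{scl}}(\Omega)}$ for $0<h\le1$. Hence, once the estimate $h\|u\|_{H^1_{\mathrm{scl}}(\Omega)}\le C_0\|L_\varphi^{(0)}u\|_{L^2(\Omega)}$ is in hand, one has $h\|u\|_{H^1_{\mathrm{scl}}}\le C_0\|L_\varphi u\|_{L^2} + C_0 C h\|u\|_{H^1_{\mathrm{scl}}}$, and choosing $h_0$ so small that $C_0 C h_0\le\tfrac12$ absorbs the last term into the left side and proves the theorem. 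So the real content lies in the case $A=0$, $q=0$.

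For the estimate for $L_\varphi^{(0)}$, I would rotate coordinates so that $\alpha=e_1$, giving $L_\varphi^{(0)}=-h^2\Delta - 1 + 2h\partial_1$, and split it into its symmetric and antisymmetric parts, $L_\varphi^{(0)}=\mathcal A + i\mathcal B$ with $\mathcal A=-h^2\Delta-1$ and $\mathcal B=-2ih\partial_1$, both symmetric on $C^\infty_0(\Omega)$. Then for $u\in C^\infty_0(\Omega)$,
\[
\|L_\varphi^{(0)}u\|_{L^2}^2 = \|\mathcal A u\|_{L^2}^2 + \|\mathcal B u\|_{L^2}^2 + \big(i[\mathcal A,\mathcal B]u,u\big)_{L^2} = \|\mathcal A u\|_{L^2}^2 + \|\mathcal B u\|_{L^2}^2,
\]
since $\mathcal A$ and $\mathcal B$ have constant coefficients and thus commute. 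This is exactly where the usual positive-commutator scheme stalls: a linear weight is a borderline (``limiting'') Carleman weight, the commutator contributes nothing, and one cannot hope for the stronger bound $\|L_\varphi^{(0)}u\|_{L^2}\gtrsim\|u\|_{H^1_{\mathrm{scl}}}$ — the loss of one power of $h$ in the statement is genuinely necessary. To recover that factor I would exploit that $\Omega$ is bounded: pick $M$ with $\Omega\subset\{|x_1|<M\}$ and integrate $\partial_1 u$ from the boundary to get the one-dimensional Poincaré inequality $\|u\|_{L^2(\Omega)}\le 2M\|\partial_1 u\|_{L^2(\Omega)} = (M/h)\|\mathcal B u\|_{L^2(\Omega)}\le (M/h)\|L_\varphi^{(0)}u\|_{L^2(\Omega)}$, i.e.\ $h\|u\|_{L^2}\le M\|L_\varphi^{(0)}u\|_{L^2}$.

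The gradient part then comes for free: integrating by parts, $\|h\nabla u\|_{L^2}^2 = (\mathcal A u,u)_{L^2} + \|u\|_{L^2}^2 \le \|\mathcal A u\|_{L^2}\|u\|_{L^2} + \|u\|_{L^2}^2$, and inserting $\|\mathcal A u\|_{L^2}\le\|L_\varphi^{(0)}u\|_{L^2}$ together with the Poincaré bound $\|u\|_{L^2}\le (M/h)\|L_\varphi^{(0)}u\|_{L^2}$ yields $\|h\nabla u\|_{L^2}^2\le C h^{-2}\|L_\varphi^{(0)}u\|_{L^2}^2$ for $h$ small, that is $h\|h\nabla u\|_{L^2}\le C\|L_\varphi^{(0)}u\|_{L^2}$. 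Adding this to $h\|u\|_{L^2}\le M\|L_\varphi^{(0)}u\|_{L^2}$ gives $h\|u\|_{H^1_{\mathrm{scl}}(\Omega)}\le C\|L_\varphi^{(0)}u\|_{L^2(\Omega)}$, as required. The only subtle point in the whole argument is recognizing that the vanishing of $[\mathcal A,\mathcal B]$ forces the $h$-loss and that the boundedness of $\Omega$ — a Poincaré inequality in the direction $\alpha$ — is precisely what compensates for it; the rest is bookkeeping and the routine remainder estimate on $\mathcal R$.
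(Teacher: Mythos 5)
Your proof is correct, but it takes a genuinely different route from the paper's. You keep the exact linear weight $\varphi=\alpha\cdot x$, note that the commutator $[\mathcal A,\mathcal B]$ of the self-adjoint and anti-self-adjoint parts of the conjugated Laplacian vanishes identically (constant coefficients), so that $\|L_\varphi^{(0)}u\|_{L^2}^2=\|\mathcal A u\|_{L^2}^2+\|\mathcal B u\|_{L^2}^2$, and then compensate for the missing positive commutator by the one-dimensional Poincar\'e inequality $\|u\|_{L^2(\Omega)}\le 2M\|\alpha\cdot\nabla u\|_{L^2(\Omega)}$, which uses the boundedness of $\Omega$ in the direction $\alpha$ and recovers $h\|u\|_{L^2}\le M\|\mathcal B u\|_{L^2}\le M\|L_\varphi^{(0)}u\|_{L^2}$. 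The paper, by contrast, convexifies the weight, working with $\varphi_\epsilon(x)=\alpha\cdot x+\tfrac{\epsilon}{2}(\alpha\cdot x)^2$ so that $i[A_\epsilon,B_\epsilon]$ becomes strictly positive, extracts $h\epsilon\|u\|_{L^2}^2$ from the commutator term, and finally chooses $\epsilon=Mh$ to return to the linear phase; boundedness of $\Omega$ enters there too, via the requirement $\epsilon|\alpha\cdot x|\le 1/2$ on $\overline{\Omega}$. Both arguments lose exactly one power of $h$, as they must for a limiting Carleman weight. Your Poincar\'e-based route is more elementary and avoids the convexification bookkeeping and the conversion back to the linear weight at the end, but it is tailored to a linear phase; the convexification argument is the one that generalizes to limiting Carleman weights with curved level sets and is the form used in the references the paper builds on. Your absorption of the lower-order perturbation $\mathcal R$ at the end is essentially the same mechanism as the paper's treatment of $Q_\epsilon$.
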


\begin{proof} In what follows, the $L^2(\Omega)$-norm is abbreviated to 
$\|\cdot\|$. 
Let $\epsilon > 0$. Define
\begin{align*}
\varphi_{\epsilon} (x)  = \alpha \dotc x + \frac{\epsilon}{2}(\alpha \dotc x)^2.
\end{align*}
Denote by $L_{0,\varphi_\epsilon} 
:= -e^{\varphi_\epsilon/h}h^2\Delta e^{-\varphi_\epsilon/h}$.
This can be decomposed as $L_{0,\varphi_\epsilon} = A_\epsilon + i
B_\epsilon$, where
\begin{align*}
A_\epsilon & :=   -h^2\Delta - (1+\epsilon\alpha\dotc x)^2, \\
B_\epsilon & := -   2 h(1+\epsilon\alpha\dotc x) \alpha \dotc i \nabla - i 
\epsilon h.
\end{align*}
A direct calculation gives that
\begin{align}
\| L_{0,\varphi_\epsilon} u \|^2 =
\| A_\epsilon u\|^2  
+ \| B_\epsilon u\|^2  + i ([A_\epsilon , B_\epsilon]u , u),\quad u\in 
C^\infty_0(\Omega). \label{eq:commt}
\end{align}
Another straightforward calculation shows that the commutator can be written  
as  
\begin{align*}
i[A_\epsilon , B_\epsilon] = -4 \epsilon h^3 
\sum_{j,k = 1}^3 \alpha_j\alpha_k \partial_j \partial_k
+ 4 h \epsilon (1 + \epsilon\alpha\dotc x)^2.
\end{align*}
The first term is an operator with a positive semi-definite semiclassical 
symbol of the form 
$4\epsilon h(\alpha\cdot\xi)^2\ge 0$. 
The inner product in \eqref{eq:commt}, with this part is therefore
non-negative.
We can hence drop it and estimate \eqref{eq:commt} as follows, 
\begin{align}
\| L_{0,\varphi_\epsilon} u \|^2 
&\geq  \| A_\epsilon u\|^2  + 
 4 h\epsilon \| (1 + \epsilon \alpha \dotc x) u \|^2 \nonumber \\
&\geq  \| A_\epsilon u\|^2  + 
 h\epsilon \| u \|^2, \label{eq:cestep1}
\end{align}
for all $\epsilon >0$ such that  $\epsilon |\alpha \dotc x| \leq 1/2$, $x\in 
\Omega$. 

The next step is to obtain a similar estimate for the first order perturbation
$L_{\varphi_\epsilon} :=  e^{\varphi_\epsilon/h}h^2\Ld e^{-\varphi_\epsilon/h}$.
We decompose this as  $L_{\varphi_\epsilon} =  L_{0,\varphi_\epsilon} +
Q_{\epsilon}$, where
\begin{align*}
Q_\epsilon & := e^{\varphi_\epsilon/h} h^2(-2iA \dotc \nabla -i\nabla \dotc A + 
A^2 +q )e^{-\varphi_\epsilon/h}.
\end{align*}
This can be estimated as 
\begin{align*}
\| Q_\epsilon u \|^2 &= h^4\| -2iA \dotc \nabla u + 2iA \dotc \nabla
\frac{\varphi_\epsilon}{h} u -i\nabla \dotc Au + A^2u +qu\|^2 \\
&\leq C \big( h^4\| \nabla u \|^2 +  h^2\| u \|^2 \big),
\end{align*}
when $h$ is sufficiently small. For  $L_{\varphi_\epsilon}$, we get
\begin{align}
\| L_{0,\varphi_\epsilon} u \|^2 
&\leq C \big( \| L_{\varphi_\epsilon} u \|^2 +  h^4\| \nabla u \|^2 +
h^2\| u \|^2 \big),
\label{eq:ce2}
\end{align}
when $h$ is sufficiently small. Next  we rewrite the gradient term and use the
Cauchy inequality to obtain
\begin{align}
h^2\| \nabla u\|^2 &= (-h^2\Delta u, u)  \nonumber \\  
&= (A_\epsilon u, u) + \| (1+\epsilon \alpha \dotc x) u \|^2 \nonumber \\
&\leq \frac{\delta}{2} \| A_{\epsilon} u \|^2 + \frac{2}{\delta} \| u\|^2 
+ 4 \| u \|^2, 
\label{eq:cegrad}
\end{align}
where $\delta > 0$.
Combining this with \eqref{eq:cestep1} and \eqref{eq:ce2}, gives
\begin{align*}
\| A_\epsilon u\|^2  +  h\epsilon \| u \|^2
&\leq 
C \bigg( \| L_{\varphi_\epsilon} u \|^2 + h^2\| u \|^2  \\ 
&+ \frac{\delta}{2} h^2\| A_{\epsilon} u \|^2 + (\frac{2}{\delta} + 4)h^2 \| u 
\|^2 \bigg ). 
\end{align*}
Choose $\delta = 2/C$. Rearranging the above gives
\begin{align*}
(1-h^2)\| A_\epsilon u\|^2  +  h(\epsilon - (C^2+5C)h) \| u \|^2
&\leq 
C \| L_{\varphi_\epsilon} u \|^2.
\end{align*}
We may assume that $h < 1/2$. The first term is then larger than  $h^2\|
A_\epsilon u\|^2$. 
Next we pick $\epsilon$ so that $(\epsilon-(C^2+5C)h)=6h$, i.e. $\epsilon=Mh$, 
$M>0$ is fixed.  This gives
\begin{align*}
h^2\| A_\epsilon u\|^2  +  6h^2 \| u \|^2
&\leq 
C \| L_{\varphi_\epsilon} u \|^2.
\end{align*}
Using \eqref{eq:cegrad} with $\delta = 2$, gives then 
\begin{align}
h^4 \| \nabla u\|^2  +  h^2 \| u \|^2 &\leq  C \| L_{\varphi_\epsilon} u \|^2.
\label{eq:s1est}
\end{align}
Written in another way we have 
\begin{align*}
h^2\| u \|^2_{H^1_{\textrm{scl}}} &\leq  C \| L_{\varphi_\epsilon} u \|^2_{L^2},
\end{align*} 
which is almost the desired estimate.

To finish the proof we need to replace $L_{\varphi_\epsilon}$ in 
\eqref{eq:s1est} by
$L_\varphi$. To this end
let $g:= \epsilon(\alpha \dotc x)^2/2$ and let $u = e^{g/h}\tilde{u}$. Notice 
that $g/h=M(\alpha \dotc x)^2/2$ is independent of $h$. 
Estimate \eqref{eq:s1est} gives now 
\begin{align*}
h^4\|\nabla u\|^2  +  h^2\| u \|^2
\leq  C \| e^{g/h} L_{\varphi} \tilde{u} \|^2
\leq  C' \| L_{\varphi} \tilde{u} \|^2,
\end{align*} 
for some constants $C,C'>0$. To obtain \eqref{eq:s1est} with
$L_\varphi$, we need only to show that 
\begin{align}
h^4\|\nabla \tilde{u}\|^2 + h^2\|\tilde{u}\|^2 \leq C(h^4\|\nabla u\|^2 + h^2\| 
u \|^2)
\label{eq:semicl1}
\end{align}
for some constant $C>0$. Using the  triangle
and Cauchy inequalities we see that 
\begin{align*}
\| e^{g/h} \nabla \tilde{u}\|^2 
&\leq   2 \| \nabla (e^{g/h}) \tilde{u} + e^{g/h} \nabla \tilde{u}  \|^2 
+  2\|\nabla(g/h) e^{g/h} \tilde{u}\|^2 \\
&\leq   2 \| \nabla (e^{g/h} \tilde{u}) \|^2 +  C h^{-2}\| e^{g/h}
\tilde{u}\|^2,
\end{align*} 
for some some constant $C>0$. Hence, 
\begin{align*}
 h^4 \|\nabla \tilde{u}\|^2 
&\leq   C( h^4\|\nabla u \|^2 + h^2\| u \|^2),
\end{align*} 
for some constant $C > 0$, which shows that \eqref{eq:semicl1} holds. The proof 
is complete. 

\end{proof}

Let 
\begin{align*}
\| u \|_{H_{\textrm{scl}}^s(\R^n)}^2 =  (2\pi)^{-3}\int_{\R^n} (1+h^2|\xi|^2)^s 
|\hat{u}(\xi)|^2 d \xi,\quad s\in \R.
\end{align*}
We have the following consequence of Theorem \ref{carlI},  see also 
\cite{DosSantos1}. 

\begin{cor} \label{carlII}
Let $A \in W^{1,\infty}(\Omega,\C^n)$ and $q \in
L^{\infty}(\Omega,\C)$. Then there exist  $C > 0$ and  $h_0 > 0 $ such that for 
all
$u \in C^{\infty}_0(\Omega)$, we have 
\begin{align}
h \| u \|_{L^2(\Omega)}  \leq 
C \| L_{\varphi} u \|_{H_{\emph{scl}}^{-1}(\R^n)},
\end{align}
when $0 < h \leq h_0$.
\end{cor}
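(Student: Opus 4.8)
The plan is to derive Corollary~\ref{carlII} from the Carleman estimate of Theorem~\ref{carlI} by conjugating with the semiclassical Bessel potential $\langle hD\rangle := (1-h^2\Delta)^{1/2}$, i.e.\ the Fourier multiplier with symbol $(1+h^2|\xi|^2)^{1/2}$, for which $\|f\|_{H^s_{\mathrm{scl}}(\R^n)}=\|\langle hD\rangle^s f\|_{L^2(\R^n)}$ up to a fixed constant. First I would fix a ball $\Omega'\supset\supset\Omega$ with smooth boundary, a cutoff $\chi\in C^\infty_0(\Omega')$ with $\chi\equiv1$ on a neighbourhood of $\overline\Omega$, and $c_0:=\operatorname{dist}(\{\chi\neq1\},\overline\Omega)>0$, and I would extend $A,q$ to $W^{1,\infty}(\R^n,\C^n)$ and $L^\infty(\R^n,\C)$ with compact support (this changes nothing, since $L_\varphi$ is a differential operator and $u$ will be supported in $\Omega$). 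Given $u\in C^\infty_0(\Omega)$, the function $\langle hD\rangle^{-1}u$ is Schwartz, so $v:=\chi\,\langle hD\rangle^{-1}u\in C^\infty_0(\Omega')$, and Theorem~\ref{carlI} applied on $\Omega'$ gives $h\|v\|_{H^1_{\mathrm{scl}}}\le C\|L_\varphi v\|_{L^2}$ for $0<h\le h_0$. The aim is to show the left side controls $h\|u\|_{L^2}$ from below, and the right side is controlled by $\|L_\varphi u\|_{H^{-1}_{\mathrm{scl}}(\R^n)}$ from above, modulo terms absorbable for small $h$.

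For the lower bound I would use $\langle hD\rangle\chi\langle hD\rangle^{-1}=\chi+[\langle hD\rangle,\chi]\langle hD\rangle^{-1}$ and $\chi\equiv1$ on $\supp(u)$ to get $\langle hD\rangle v=u+[\langle hD\rangle,\chi]\langle hD\rangle^{-1}u$. Since $\chi$ is smooth, $\|[\langle hD\rangle,\chi]\|_{L^2\to L^2}=\mathcal{O}(h)$ (semiclassical calculus, or a Calderón-type commutator bound), while $\|\langle hD\rangle^{-1}u\|_{L^2}\le\|u\|_{L^2}$; hence $\|v\|_{H^1_{\mathrm{scl}}}=\|\langle hD\rangle v\|_{L^2}\ge(1-Ch)\|u\|_{L^2}\ge\tfrac12\|u\|_{L^2}$ for $h$ small.

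For the upper bound I would split $v=\langle hD\rangle^{-1}u-(1-\chi)\langle hD\rangle^{-1}u$. The first piece gives $L_\varphi\langle hD\rangle^{-1}u=\langle hD\rangle^{-1}L_\varphi u+[L_\varphi,\langle hD\rangle^{-1}]u$, where $\|\langle hD\rangle^{-1}L_\varphi u\|_{L^2}=\|L_\varphi u\|_{H^{-1}_{\mathrm{scl}}(\R^n)}$ is exactly the target quantity. Because the constant-coefficient second-order part $-h^2\Delta+2h\alpha\cdot\nabla-1$ of $L_\varphi$ is a Fourier multiplier, only the lower-order terms $-2ih^2A\cdot\nabla+h^2(-i\nabla\cdot A+A^2+q)+2ih(A\cdot\alpha)$ contribute to the commutator; using $[A_j\partial_j,\langle hD\rangle^{-1}]=[A_j,\langle hD\rangle^{-1}]\partial_j$ together with $\|[A_j,\langle hD\rangle^{-1}]\langle hD\rangle\|_{L^2\to L^2}=\|\langle hD\rangle^{-1}[\langle hD\rangle,A_j]\|_{L^2\to L^2}=\mathcal{O}(h)$ (a Calderón commutator estimate for the Lipschitz function $A_j$) and $\|\langle hD\rangle^{-1}\partial_j\|_{L^2\to L^2}=\mathcal{O}(h^{-1})$, one gets $\|[L_\varphi,\langle hD\rangle^{-1}]u\|_{L^2}=\mathcal{O}(h^2)\|u\|_{L^2}$. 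The second piece $(1-\chi)\langle hD\rangle^{-1}u$ vanishes within distance $c_0$ of $\supp(u)$; since the Schwartz kernel of $\langle hD\rangle^{-1}$ is $h^{-n}K(\cdot/h)$ with $K$ and all its derivatives exponentially decaying, this function and its derivatives are $\mathcal{O}(h^{-N}e^{-c_0/(2h)})\|u\|_{L^1}$ pointwise; and because conjugating a differential operator by $e^{\varphi/h}$ again yields a differential operator with $x$-bounded (merely $h$-dependent) coefficients — no growing weight — it follows that $\|L_\varphi((1-\chi)\langle hD\rangle^{-1}u)\|_{L^2}=\mathcal{O}(e^{-c_0/(4h)})\|u\|_{L^2}$.

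Combining the three bounds gives $\tfrac h2\|u\|_{L^2}\le C\|L_\varphi u\|_{H^{-1}_{\mathrm{scl}}(\R^n)}+C(h^2+e^{-c_0/(4h)})\|u\|_{L^2}$, and for $h$ small the last term is absorbed into the left side, yielding the claim. The hard part will be the commutator term $[L_\varphi,\langle hD\rangle^{-1}]u$: to be absorbable it must be $o(h)\|u\|_{L^2}$, and since the magnetic potential is only Lipschitz one cannot invoke the smooth pseudodifferential calculus — the essential input is the Calderón-type commutator estimate $\|[\langle hD\rangle,a]\|_{L^2\to L^2}\le Ch\|\nabla a\|_{L^\infty}$ for $a\in W^{1,\infty}$. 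Everything else (the extension of the coefficients, the exponential smallness of the cutoff error, and the mapping properties of $\langle hD\rangle^{\pm1}$) is routine bookkeeping.
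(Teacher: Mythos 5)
The paper does not actually prove this corollary; it states it as a ``consequence of Theorem~\ref{carlI}'' with a pointer to~\cite{DosSantos1}, and then uses it in the proof of the solvability proposition. So there is nothing in this thesis to compare against line by line. Your argument is, however, a correct and standard way to pass from the $H^1_{\mathrm{scl}}\to L^2$ Carleman estimate to the $L^2\to H^{-1}_{\mathrm{scl}}$ one, and it matches the technique in the cited references: apply Theorem~\ref{carlI} to a cut-off version of $\langle hD\rangle^{-1}u$, verify that the left side still controls $h\|u\|_{L^2}$ (smooth-cutoff commutator, $\mathcal{O}(h)$), that the right side is controlled by $\|L_\varphi u\|_{H^{-1}_{\mathrm{scl}}}$ plus commutator errors, that the genuine commutator $[L_\varphi,\langle hD\rangle^{-1}]$ is $\mathcal{O}(h^2)$ on $L^2$ because the second-order part of $L_\varphi$ is a Fourier multiplier (using $\varphi$ linear), and that the cutoff mismatch is exponentially small by kernel decay. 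The bookkeeping you give — splitting $[A_j,\langle hD\rangle^{-1}]\partial_j = \bigl([A_j,\langle hD\rangle^{-1}]\langle hD\rangle\bigr)\bigl(\langle hD\rangle^{-1}\partial_j\bigr)$ with norms $\mathcal{O}(h)$ and $\mathcal{O}(h^{-1})$, and absorbing the $\mathcal{O}(h^2)\|u\|_{L^2}$ remainder — is exactly right and closes the argument.

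Two small remarks. First, you correctly single out the commutator bound $\|[\langle hD\rangle,a]\|_{L^2\to L^2}\le Ch\|\nabla a\|_{L^\infty}$ for Lipschitz $a$ as the load-bearing harmonic-analysis input; note that this is not literally ``the'' Calder\'on commutator theorem (which is for $|D|$, or $H\frac{d}{dx}$ in one variable), but a Coifman--Meyer/Kato--Ponce-type commutator estimate for an order-one Fourier multiplier with a Mikhlin symbol. After the semiclassical rescaling $U_h$, this reduces to $\|[\langle D\rangle,a_h]\|\lesssim\|\nabla a_h\|_{L^\infty}$ with $a_h(x)=a(hx)$, which gives the $h$-gain; but the step from $|D|$ to $\langle D\rangle$ requires controlling the commutator with the bounded symbol $\langle\xi\rangle-|\xi|$ as well, and the naive operator-norm bound on that piece loses the $\|\nabla a\|$ structure, so one has to argue via the kernel (it has integrable first moment) or cite the sharp multiplier-commutator theorem directly. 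It is true, and it is exactly what is used in~\cite{KnuSalo} for the Lipschitz CGO construction, but it deserves a precise citation rather than the label ``Calder\'on commutator.'' Second, when you extend $A$, $q$ to all of $\mathbb{R}^n$ it is worth noting explicitly that the right-hand side $\|L_\varphi u\|_{H^{-1}_{\mathrm{scl}}}$ is unchanged by the extension because $\supp u\subset\Omega$, so the extension only enters through the constant $C$ — you say this, but it is the one place where a reader might pause.
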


The formal adjoint of $L_\varphi$ is given by 
\begin{align*}
L_\varphi^* =  e^{-\varphi / h} h^2 L_{\ov{A},\overline{q}} e^{\varphi/h},
\end{align*}
and Corollary \ref{carlII} still holds for $L_\varphi^*$.

Next we prove the following solvability result, see also \cite[Proposition 
4.3]{KnuSalo}. 
\begin{prop}
Let $A \in W^{1,\infty}(\Omega,\C^n)$, $q \in
L^{\infty}(\Omega,\C)$, $\alpha \in \R^n$, $|\alpha|=1$ and $\varphi(x) =
\alpha \cdot x$. Then there is  $C > 0$ and  $h_0 > 0 $ such that for all $h\in 
(0,h_0]$, and any $f\in L^2(\Omega)$, the equation
\[
L_{\varphi} u=f\quad\textrm{in}\quad \Omega,
\]
has a solution $u\in H^1(\Omega)$ with 
\begin{align*}
\|u \|_{H^1_{\emph{\textrm{scl}}}(\Omega)}\le \frac{C}{h}\|f\|_{L^2(\Omega)}. 
\end{align*}
 \end{prop}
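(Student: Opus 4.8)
The plan is to deduce solvability from the Carleman estimate of Corollary \ref{carlII}, applied to the formal adjoint $L_\varphi^\ast = e^{-\varphi/h}h^2 L_{\ov{A},\ov{q}}e^{\varphi/h}$, by a standard Hahn--Banach plus Riesz representation argument carried out in the semiclassical Sobolev scale. First I would record the consequence of Corollary \ref{carlII} for the adjoint: there are $C>0$ and $h_0>0$ such that
\[
h\|v\|_{L^2(\Omega)}\le C\|L_\varphi^\ast v\|_{H^{-1}_{\mathrm{scl}}(\R^n)},\qquad v\in C^\infty_0(\Omega),\ 0<h\le h_0,
\]
where functions in $C^\infty_0(\Omega)$ are extended by zero to $\R^n$, and the uniformity of $C$ for $h\le h_0$ is exactly what that corollary supplies.

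Fix $h\in(0,h_0]$ and $f\in L^2(\Omega)$, extended by zero to $\R^n$. On the subspace $S:=\{L_\varphi^\ast v : v\in C^\infty_0(\Omega)\}\subset H^{-1}_{\mathrm{scl}}(\R^n)$ I would define the linear functional $\ell(L_\varphi^\ast v):=(v,f)_{L^2(\Omega)}$, with $f$ placed in the second slot so as to match the sesquilinear duality I intend to use. The displayed estimate shows $L_\varphi^\ast$ is injective on $C^\infty_0(\Omega)$, so $\ell$ is well defined, and the Cauchy--Schwarz inequality together with the estimate gives $|\ell(L_\varphi^\ast v)|\le\|f\|_{L^2}\|v\|_{L^2}\le \tfrac{C}{h}\|f\|_{L^2}\|L_\varphi^\ast v\|_{H^{-1}_{\mathrm{scl}}}$, i.e. $\ell$ is bounded on $S$ with norm at most $\tfrac{C}{h}\|f\|_{L^2(\Omega)}$.

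Next I would extend $\ell$ by Hahn--Banach to a bounded functional on all of $H^{-1}_{\mathrm{scl}}(\R^n)$ with the same norm bound, and then invoke Riesz representation: $H^{-1}_{\mathrm{scl}}(\R^n)$ is a Hilbert space whose antidual is isometrically $H^{1}_{\mathrm{scl}}(\R^n)$ under the $L^2$ pairing, so there is $u\in H^1_{\mathrm{scl}}(\R^n)$ with $\ell(w)=\langle w,u\rangle$ for all $w$ and $\|u\|_{H^1_{\mathrm{scl}}(\R^n)}\le \tfrac{C}{h}\|f\|_{L^2(\Omega)}$. Testing against $w=L_\varphi^\ast v$ with $v\in C^\infty_0(\Omega)$ gives $\langle L_\varphi^\ast v,u\rangle=(v,f)_{L^2}$; since $L_\varphi^\ast$ is the formal adjoint of $L_\varphi$, the left-hand side is $\langle v,L_\varphi u\rangle$ in the sense of distributions, whence $L_\varphi u=f$ in $\Omega$. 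Restricting $u$ to $\Omega$ yields $u\in H^1(\Omega)$ with $\|u\|_{H^1_{\mathrm{scl}}(\Omega)}\le\|u\|_{H^1_{\mathrm{scl}}(\R^n)}\le\tfrac{C}{h}\|f\|_{L^2(\Omega)}$.

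The step requiring the most care is not a genuine obstacle but the bookkeeping of the sesquilinear pairings and complex conjugations: one must arrange $\ell$ and the $H^{-1}_{\mathrm{scl}}$--$H^1_{\mathrm{scl}}$ duality so that the Riesz representative really solves $L_\varphi u=f$ rather than a conjugated equation, and one must be sure the Carleman estimate is invoked for $L_\varphi^\ast$ (not $L_\varphi$), which is legitimate by the remark following Corollary \ref{carlII}. Everything else is routine.
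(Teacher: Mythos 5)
Your argument is correct and follows essentially the same route as the paper: invoke Corollary \ref{carlII} for $L_\varphi^*$, define the functional $v\mapsto (v,f)_{L^2}$ on $L_\varphi^*C_0^\infty(\Omega)$, extend by Hahn--Banach, and apply Riesz representation in the semiclassical scale to obtain $u\in H^1_{\textrm{scl}}(\R^n)$ with the required bound. The only cosmetic difference is that the paper first produces a representative $r\in H^{-1}_{\textrm{scl}}(\R^n)$ and then sets $u=\mathcal{F}^{-1}(1+h^2\xi^2)^{-1}\mathcal{F}r$, whereas you identify the antidual of $H^{-1}_{\textrm{scl}}(\R^n)$ with $H^{1}_{\textrm{scl}}(\R^n)$ in one step, which is the same isometry.
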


\begin{proof}
The Carleman estimate of Corollary \ref{carlII} applied to $L_\varphi^*$ shows 
that $L_\varphi^*$ is
injective on $C^\infty_0(\Omega)$.  This lets us define the functional
$T:L_\varphi^*C^\infty_0(\Omega) \nuoli \C$,
given  by 
\begin{align*}
T(w) := \big((L_\varphi^*)^{-1}w,f \big)_{L^2(\R^n)}.
\end{align*}
The set 
$\Dom(T) \subset L^{\infty}(\Omega) \subset H^{-1}_{\textrm{scl}}(\R^n)$ is a 
linear 
subspace. The linear functional $T$ is moreover bounded, since by Corollary 
\ref{carlII},
\begin{equation}
\label{eq_norm_T}
\begin{aligned}
|T(w)|&= |\big((L_\varphi^*)^{-1}w,f \big)_{L^2(\Omega)}|\\
&\leq
\| (L_\varphi^*)^{-1}w\|_{L^2(\Omega)} \| f\|_{L^2(\Omega)} \\
&\leq
\frac{C}{h} \| f\|_{L^2(\Omega)}\| w\|_{H^{-1}_{\textrm{scl}}(\R^n)},\quad C>0.
\end{aligned}
\end{equation}
The Hahn-Banach theorem allows us to extend $T$, without
increasing its norm, to an operator
$\tilde{T}: H^{-1}_{\textrm{scl}}(\R^n) \nuoli \C$. By the  Riesz 
representation theorem there
exists $r \in H^{-1}_{\textrm{scl}}(\R^n)$ such that 
\begin{align*}
\tilde{T}(w) = (w,r)_{H^{-1}_{\textrm{scl}}},\quad \text{for $w \in
H^{-1}_{\textrm{scl}}(\R^n)$},
\end{align*}
and 
\begin{equation}
\label{eq_norm_T_2}
\|r\|_{H^{-1}_{\textrm{scl}}(\R^n)}=\|\tilde T\|\le \|T\|\le \frac{C}{h} \| 
f\|_{L^2(\Omega)}.
\end{equation}
Here we have used \eqref{eq_norm_T}.

Furthermore, we have
\[
(w,r)_{H^{-1}_{\textrm{scl}}}=(w,u)_{L^2},
\]
with 
\[
u=\mathcal{F}^{-1}(1+h^2\xi^2)^{-1}\mathcal{F} r\in H^1_{\textrm{scl}}(\R^n). 
\]
Here $\mathcal{F} $ is the Fourier transformation on $\R^n$.

We now show that  $u$ solves $L_\varphi u=f$ in the weak sense in $\Omega$. 
For every $\psi \in C^\infty_0(\Omega)$ we have
\begin{align*}
(L_\varphi u,\psi)_{L^2(\Omega)} 
&= (u,L_\varphi^*\psi)_{L^2(\R^n)} \\
&= \ov{T(L_\varphi^*\psi)} \\
&= \ov{\big((L_\varphi^*)^{-1}L_\varphi^*\psi, f \big)_{L^2(\R^n)}} \\
&= (f,\psi)_{L^2(\Omega)}.
\end{align*}
Hence $u$ is a weak solution.

To obtain the norm estimate, we observe that 
$\|u\|_{H^1_{\textrm{scl}}(\R^n)}=\|r\|_{H^{-1}_{\textrm{scl}}(\R^n)}$ and use 
\eqref{eq_norm_T_2}.  The proof is complete. 

\end{proof}

\subsection{The unique continuation principle}

In this work we make heavy use of the so called \textit{unique
continuation principle}. The unique continuation principle can be seen as an 
extension of the  familiar property that an analytic function that 
is zero on some
open set is identically zero.

Let $\Omega\subset\R^n$ be an open connected set, and let
\[
Pu=\sum_{i,j=1}^n a_{ij}(x)\p_i\!\p_j\!u+\sum_{i} b_i(x)\p_i\!u+c(x)u. 
\]
Here $a_{ij}\in C^1(\ov{\Omega})$ are real-valued, $a_{ij}=a_{ji}$, and there 
is $C>0$ so that 
\[
\sum_{i,j=1}^n a_{ij}(x) \xi_i \xi_j \geq C|\xi|^2, \quad x \in 
\ov{\Omega},\quad \xi\in\R^n.
\]
Furthermore, 
$b_i\in L^\infty(\Omega,\C)$ and $c\in L^\infty(\Omega, \C)$. We have the 
following result, see  \cite{Choulli} and \cite{Leis1}.

\begin{thm}\label{UCP} 
Let $u\in H^2_{\emph{loc}}(\Omega)$ be such that $Pu=0$ in $\Omega$. Let 
$\omega\subset\Omega$ be open non-empty. If $u=0$ on $\omega$, then $u$ 
vanishes identically in $\Omega$. 

\end{thm}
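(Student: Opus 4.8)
The plan is to reduce the global statement to a local propagation property by a connectedness argument, and to obtain that local property from a Carleman estimate for $P$.

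First I would set
\[
U := \{\, x \in \Omega \;:\; u \text{ vanishes on some open neighbourhood of } x \,\}.
\]
By construction $U$ is open, and $\omega \subset U$, so $U \neq \emptyset$. Since $\Omega$ is connected, it suffices to show that $U$ is closed in $\Omega$. Let $x_0 \in \Omega \cap \overline{U}$; I must produce a ball about $x_0$ on which $u \equiv 0$. As $x_0$ is a limit of points of $U$, there is $x_1 \in U$ arbitrarily close to $x_0$ together with a small ball $B(x_1,r) \subset U$ on which $u = 0$. Choosing a smooth function $\psi$ near $x_0$ with $\nabla\psi(x_0)\neq 0$ whose level set $\{\psi = \psi(x_0)\}$ bulges away from $x_0$ into $B(x_1,r)$, the local claim I want is: \emph{if $Pu = 0$ near $x_0$ and $u$ vanishes in $\{\psi > \psi(x_0)\}$ near $x_0$, then $u$ vanishes on a full neighbourhood of $x_0$.} Granting this, $x_0 \in U$, and closedness follows.

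The local claim rests on a Carleman estimate with a convex exponential weight. Write $P = P_0 + P_1$ with $P_0 = \sum_{i,j} a_{ij}\partial_i\partial_j$ and $P_1 = \sum_i b_i\partial_i + c$, and set $\varphi = e^{\lambda\psi}$. Conjugating $P_0$ by $e^{\tau\varphi}$, splitting into self- and skew-adjoint parts and estimating the commutator --- the same type of computation used for the linear-weight Carleman estimate in the appendix, but now exploiting the convexity gained from $\lambda$ large and the $C^1$ regularity of the $a_{ij}$ --- one obtains, for $\lambda$ fixed large and all $\tau \geq \tau_0$,
\[
\tau^3 \| e^{\tau\varphi} v \|_{L^2}^2 + \tau \| e^{\tau\varphi} \nabla v \|_{L^2}^2 \;\leq\; C \| e^{\tau\varphi} P_0 v \|_{L^2}^2, \qquad v \in C_0^\infty(B(x_0,\rho_0)).
\]
This is precisely the estimate in the references cited above; I would invoke it or reproduce it. Since $b_i, c \in L^\infty$ we have $\| e^{\tau\varphi} P_1 v \|_{L^2}^2 \leq C( \| e^{\tau\varphi} v \|_{L^2}^2 + \| e^{\tau\varphi} \nabla v \|_{L^2}^2 )$, which for $\tau$ large is absorbed by the left-hand side, so the same estimate holds with $P_0$ replaced by $P$, and by density it extends to all $v \in H^2$ with compact support in $B(x_0,\rho_0)$.

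To conclude, fix a cutoff $\chi \in C_0^\infty(B(x_0,\rho_0))$ with $\chi \equiv 1$ on a neighbourhood of $x_0$, and apply the estimate to $v = \chi u$ (mollifying $u$, which is only $H^2_{\mathrm{loc}}$, and passing to the limit). Since $Pu=0$ where $\chi\neq 0$, we get $Pv = [P,\chi]u$, a first-order expression in $u$ supported in $\{\nabla\chi\neq 0\}$. That support splits into a part contained in $\{u=0\}$, which contributes nothing, and a compact part $K$ which --- by choosing $\psi$, $r$ and $\chi$ so that $K$ lies in $\{\varphi < \varphi(x_0)-\delta\}$ while $\{\varphi > \varphi(x_0)-\delta/2\}$ lies in $\{\chi = 1\}$ --- carries a weight $e^{\tau\varphi}$ exponentially smaller, in $\tau$, than on the neighbourhood of $x_0$. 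Keeping only the integral over $\{\varphi > \varphi(x_0)-\delta/2\}$ on the left and letting $\tau\to\infty$ forces $u \equiv 0$ there, hence on a ball about $x_0$. The step I expect to be most delicate is arranging this geometry --- the level-set function $\psi$, the cutoff $\chi$ and the radii --- so that the commutator support is genuinely strictly inside the sublevel set of the weight, together with the approximation argument needed to run the Carleman estimate, stated for smooth compactly supported functions, on the merely $H^2_{\mathrm{loc}}$ solution $u$.
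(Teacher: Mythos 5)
The paper does not prove Theorem \ref{UCP}; it records it as a known result and refers to \cite{Choulli} and \cite{Leis1}. Your proposal supplies the standard Carleman-weight argument, which is what those references rely on, and the structure is sound: you reduce to a local propagation statement via the open-and-closed set $U$, and you correctly identify the estimate that makes the local step work. The exponentiated weight $\varphi=e^{\lambda\psi}$ with $\lambda$ large fixed yields the gain $\tau^3\|e^{\tau\varphi}v\|^2+\tau\|e^{\tau\varphi}\nabla v\|^2$, which is exactly what allows you to absorb the $L^\infty$ lower-order part $\sum b_i\partial_i+c$ for $\tau$ large with no smallness condition. This is genuinely stronger than the linear-plus-$\mathcal{O}(h)$-convexity weight used in the paper's appendix, which in the notation $\tau=1/h$ only produces $\tau^2\|v\|^2+\|\nabla v\|^2$; that is enough for the CGO construction but not for unique continuation with merely bounded first-order coefficients.

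Two geometric points that you flag as delicate do need to be made precise, because a naive version of each fails. First, the annular set $\supp\nabla\chi$ unavoidably meets the level set $\{\psi=\psi(x_0)\}$, so with the original $\psi$ the surviving part of $\supp([P,\chi]u)$ is not contained in $\{\varphi<\varphi(x_0)-\delta\}$. One must convexify the hypersurface before applying the estimate: replace $\psi$ by $\tilde\psi:=\psi-\sigma|x-x_0|^2$. Then $u$ still vanishes on $\{\tilde\psi>\tilde\psi(x_0)\}$ near $x_0$, since that set lies inside $\{\psi>\psi(x_0)\}$ away from $x_0$, while on $\supp\nabla\chi\cap\{\psi\le\psi(x_0)\}$ one has $\tilde\psi\le\tilde\psi(x_0)-\sigma\rho_0^2/4$, giving the exponential separation in $\tau$. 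Second, the closedness of $U$ does not follow merely by picking $x_1\in U$ arbitrarily close to a boundary point $x_0$: as $x_1\to x_0$ the radius of the ball inside $U$ around $x_1$ can shrink faster than $|x_1-x_0|$, so $x_0$ need not sit on the boundary of any ball contained in $U$. Instead fix $\rho$ with $B(x_0,3\rho)\subset\Omega$, choose $y\in U\cap B(x_0,\rho)$, set $R:=\sup\{r\le 2\rho:B(y,r)\subset U\}$ (necessarily $R<2\rho$, else $x_0\in U$), pick $z\in\partial B(y,R)\setminus U$, and apply the local propagation at $z$ with $\psi(x)=R^2-|x-y|^2$; the conclusion $z\in U$ is the contradiction. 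With these two adjustments your outline is a complete proof under the hypotheses $a_{ij}\in C^1$, $b_i,c\in L^\infty$ stated in the paper.
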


\begin{cor} \label{UCP_boundary}  Assume that $\p \Omega$ is of class $C^2$. 
Let $\Gamma\subset \p \Omega$ be open non-empty. Let $u\in H^2(\Omega)$ be such 
that $Pu=0$ in $\Omega$. Assume that 
\[
u=\mathcal{B}_{\nu}u=0\quad \textrm{on}\quad \Gamma.
\]
Here $\mathcal{B}_{\nu}u$ is  the conormal derivative of $u$, given by
\[
\mathcal{B}_{\nu}u=\sum_{i, j=1}^n \nu_i (a_{ij}\p_{j} u)|_{\p \Omega}\in 
H^{1/2}(\p \Omega).
\]
Then $u$ vanishes identically in $\Omega$. 

\end{cor}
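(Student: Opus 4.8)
The plan is to deduce the statement from the interior unique continuation principle of Theorem~\ref{UCP} by extending $u$ by zero across $\Gamma$. First I would localize: fix $x_0\in\Gamma$ and choose $\epsilon>0$ so small that $B:=B(x_0,\epsilon)$ satisfies $B\cap\p\Omega\subset\Gamma$; since $\p\Omega$ is of class $C^2$, the domain lies locally on one side of its boundary, so that both $B\cap\Omega$ and $B\setminus\overline\Omega$ are non-empty open sets, and $B$ itself is connected. Define $\tilde u$ on $B$ by $\tilde u=u$ on $B\cap\Omega$ and $\tilde u=0$ on $B\setminus\overline\Omega$.

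The key step is to show that $\tilde u\in H^2(B)$. Since $u\in H^2(\Omega)$, the traces of $u$ and of $\nabla u$ on $\Gamma$ are well defined. The condition $u|_\Gamma=0$ gives $\tilde u\in H^1(B)$ and forces all tangential derivatives of $u$ along $\Gamma$ to vanish, so that $\nabla u|_\Gamma=(\p_\nu u|_\Gamma)\,\nu$. Consequently
\[
0=\mathcal{B}_{\nu}u=\sum_{i,j=1}^n \nu_i a_{ij}\p_j u\,\Big|_\Gamma=(\p_\nu u|_\Gamma)\sum_{i,j=1}^n a_{ij}\nu_i\nu_j,
\]
and since $\sum_{i,j}a_{ij}\nu_i\nu_j\ge C|\nu|^2=C>0$ by the ellipticity hypothesis, we get $\p_\nu u|_\Gamma=0$, hence $\nabla u|_\Gamma=0$. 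Thus $\nabla\tilde u$ has matching (vanishing) traces from the two sides of the $C^2$ hypersurface $B\cap\p\Omega$, which yields $\tilde u\in H^2(B)$; flattening the boundary near $x_0$ by a $C^2$ change of coordinates reduces this to the standard gluing lemma for $H^2$ functions across a hyperplane.

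Next I would extend the coefficients to $B$. Extend each $a_{ij}\in C^1(\overline\Omega)$ to a $C^1$ function on $B$ (after shrinking $\epsilon$, using a $C^1$ extension for the $C^2$ domain), and, by continuity together with the uniform ellipticity on $\overline\Omega\cap B$, shrink $\epsilon$ once more so that the extended matrix $(a_{ij})$ stays elliptic on all of $B$; extend $b_i$ and $c$ by $0$. Denote the resulting operator by $\tilde P$. Since $\tilde u\in H^2(B)$, the function $\tilde P\tilde u\in L^2(B)$ is defined; it equals $Pu=0$ a.e.\ on $B\cap\Omega$ and $\tilde P 0=0$ a.e.\ on $B\setminus\overline\Omega$, and since $B\cap\p\Omega$ is a null set, $\tilde P\tilde u=0$ a.e.\ in $B$.

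Finally, Theorem~\ref{UCP} applied on the connected open set $B$ with $\omega=B\setminus\overline\Omega$ (open, non-empty, and $\tilde u=0$ there) gives $\tilde u\equiv 0$ in $B$, hence $u\equiv 0$ on the non-empty open subset $B\cap\Omega$ of $\Omega$. A second application of Theorem~\ref{UCP}, now on $\Omega$ (which is connected), yields $u\equiv 0$ in $\Omega$. I expect the only genuinely delicate point to be the $H^2$-gluing of $\tilde u$ across $B\cap\p\Omega$ in the second paragraph; everything else is routine, and that point is handled by flattening the $C^2$ boundary and matching the traces of $u$ and $\nabla u$.
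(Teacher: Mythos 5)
Your argument is correct, and it is the standard way to derive boundary unique continuation from the interior unique continuation principle: localize near a point of $\Gamma$, use the vanishing Dirichlet trace to reduce $\nabla u|_\Gamma$ to its normal component, use ellipticity to turn $\mathcal{B}_\nu u=0$ into $\p_\nu u|_\Gamma=0$, glue $u$ with $0$ across the $C^2$ interface to get an $H^2$ function on a small ball, extend the coefficients keeping ellipticity, and then invoke Theorem~\ref{UCP} twice. For reference, the paper does not include its own proof of this corollary; it states it alongside Theorem~\ref{UCP} with citations to \cite{Choulli} and \cite{Leis1}, so there is no internal argument to compare against. Two minor points worth tightening if you write this up: when you flatten the boundary you should say explicitly that a $C^2$ change of variables preserves $H^2$ (so the gluing lemma across a hyperplane applies), and when you extend the $a_{ij}$ you should arrange that the extension remains real-valued and symmetric (e.g.\ extend and then take the symmetric real part), since Theorem~\ref{UCP} as stated requires those properties of the principal coefficients.
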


\subsection{Rellich's lemma}

Rellich's lemma is a fundamental result in the scattering theory of the 
Helmholtz
equation, see  see e.g. \cite{Colton1}.

\begin{prop}\label{RL} Let $k>0$ and let $u \in \mathcal{D}'(\R^3)$ satisfy the 
Helmholtz equation $(-\Delta-k^2)u=0$ outside a  ball $B$ in $\R^3$.  Assume 
that 
\begin{align*}
\lim_{R \to \infty} \int_{|x|=R} |u|^2 dS\to 0.
\end{align*}
Then $u \equiv 0$ in $\R^3 \setminus \ov{B}$.
\end{prop}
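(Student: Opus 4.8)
The plan is to run the classical spherical-harmonics argument (see \cite{Colton1}). After a translation we may assume $B = B(0,\rho)$, so that $(-\Delta-k^2)u = 0$ on $\{|x|>\rho\} = \R^3\setminus\overline{B}$; by elliptic regularity $u$ is $C^\infty$ (in fact real-analytic) there, so the traces $u(r\,\cdot\,)$ on spheres make sense. For $r>\rho$ and spherical harmonics $Y_n^m$ on $S^2$ set
\[
a_{nm}(r) := \int_{S^2} u(r\theta)\,\overline{Y_n^m(\theta)}\,dS(\theta).
\]
First I would show that each $a_{nm}$ solves the spherical Bessel equation on $(\rho,\infty)$: writing the Helmholtz operator in polar coordinates as $\Delta = \tfrac{1}{r^2}\partial_r(r^2\partial_r) + \tfrac{1}{r^2}\Delta_{S^2}$, pairing $(\Delta+k^2)u=0$ with $\overline{Y_n^m}$ over $S^2$, differentiating under the integral sign (legitimate since $u$ is smooth), and using self-adjointness of $\Delta_{S^2}$ together with $\Delta_{S^2}Y_n^m = -n(n+1)Y_n^m$, one obtains $(r^2 a_{nm}')' + (k^2 r^2 - n(n+1))a_{nm} = 0$. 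Hence $a_{nm}(r) = A_{nm}\,j_n(kr) + B_{nm}\,y_n(kr)$ for some constants $A_{nm},B_{nm}\in\C$, where $j_n$, $y_n$ are the spherical Bessel functions (both admissible, since we work away from the origin).

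Next, Parseval's identity on $S^2$ gives $\int_{|x|=r}|u|^2\,dS = r^2\sum_{n,m}|a_{nm}(r)|^2$, so the hypothesis forces $r^2|a_{nm}(r)|^2 \to 0$ as $r\to\infty$ for each fixed $n,m$. Inserting the large-argument asymptotics $j_n(t) = t^{-1}\sin(t-\tfrac{n\pi}{2}) + O(t^{-2})$ and $y_n(t) = -t^{-1}\cos(t-\tfrac{n\pi}{2}) + O(t^{-2})$ yields
\[
r^2|a_{nm}(r)|^2 = \tfrac{1}{k^2}\bigl|A_{nm}\sin(kr-\tfrac{n\pi}{2}) - B_{nm}\cos(kr-\tfrac{n\pi}{2})\bigr|^2 + O(r^{-1}).
\]
Since $\theta\mapsto|A\sin\theta - B\cos\theta|^2$ is a nonnegative, continuous, $\pi$-periodic function that is identically zero only if $A=B=0$ (evaluate at $\theta=0$ and $\theta=\pi/2$), the decay $r^2|a_{nm}(r)|^2\to0$ forces $A_{nm}=B_{nm}=0$. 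Thus $a_{nm}\equiv0$ for all $n,m$; since $\{Y_n^m\}$ is complete in $L^2(S^2)$, $u(r\,\cdot\,)=0$ in $L^2(S^2)$ for every $r>\rho$, and by continuity $u\equiv0$ on $\R^3\setminus\overline{B}$. (Alternatively, once $u$ is known to vanish on a spherical shell one may finish by the unique continuation principle, Theorem \ref{UCP}, applied on the connected set $\R^3\setminus\overline{B}$, which avoids recentring $B$.)

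I expect the only genuine obstacle to be the careful handling of this last step — converting the single scalar hypothesis into the vanishing of every coefficient pair $(A_{nm},B_{nm})$ — which relies on the precise asymptotics of $j_n$, $y_n$ and on the elementary observation above; the remaining ingredients (elliptic regularity, term-by-term differentiation, reduction to the radial ODE via the Laplace--Beltrami eigenfunctions, and Parseval) are standard and routine.
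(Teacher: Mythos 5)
The paper does not supply its own proof of Rellich's lemma; it simply cites \cite{Colton1}, and the argument you give is precisely the standard one found there (and in most scattering-theory texts): expand the restriction of $u$ to spheres in spherical harmonics, pair the Helmholtz equation with $\overline{Y_n^m}$ to reduce to the spherical Bessel ODE so that $a_{nm}=A_{nm}j_n(kr)+B_{nm}y_n(kr)$, invoke Parseval to get $r^2|a_{nm}(r)|^2\to 0$, and then use the $1/t$-asymptotics of $j_n,y_n$ together with periodicity to force $A_{nm}=B_{nm}=0$. All of these steps are correct, and the key computation — that $|A\sin\theta-B\cos\theta|^2$ is $\pi$-periodic and vanishes identically only when $A=B=0$ — is handled properly, so the decay hypothesis kills every mode.

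The one thing worth tightening is the opening line ``after a translation we may assume $B=B(0,\rho)$.'' Translating $u$ so that the center of $B$ moves to the origin also moves the spheres $\{|x|=R\}$ appearing in the hypothesis, so the decay condition does not transport cleanly under that change of variables. The correct and cleaner reduction is the one you already sketch in your closing parenthetical: leave the origin alone, pick $\rho$ large enough that $\ov{B}\subset B(0,\rho)$, run the spherical-harmonic argument on $\{|x|>\rho\}$ (where $u$ is smooth, indeed real-analytic, by elliptic and analytic hypoellipticity) to obtain $u\equiv 0$ there, and then propagate the vanishing to the whole connected open set $\R^3\setminus\ov{B}$ by the unique continuation principle (Theorem~\ref{UCP}) or, equivalently, by real-analyticity. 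With the reduction stated that way the proof is complete and matches the cited reference.
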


\newpage

\bibliographystyle{plain}
\bibliography{ref}

\end{document}